\numberwithin{equation}{section}
\def\be#1\ee{\begin{equation}#1\end{equation}}
\newtheorem{proposition}{Proposition}
\theoremstyle{definition}
\newtheorem{alg}{Algorithm}[section]
\newtheorem{remark}{Remark}
\newcommand{\xx}{{\bf x}}
\newcommand{\zz}{{\bf z}}
\DeclareMathOperator{\sign}{sign}
\newcommand{\vv}{{\bf v}}
\newcommand{\BB}{{\bf B}}
\newcommand{\EE}{{\bf E}}
\newcommand{\UU}{{\bf U}}
\def\be{\begin{equation}}
	\def\ee{\end{equation}}
\def\bea{\begin{eqnarray}}
	\def\eea{\end{eqnarray}}
\title{Robust feedback control of collisional plasma dynamics in presence of uncertainties}
\author{G. Albi,\thanks{Department of Computer Science,
		University of Verona, Strada le Grazie 15, 37134 Verona, ITALY (giacomo.albi@univr.it).}\and
	G. Dimarco,\thanks{Department of Mathematics and Computer Science \& Center for Modeling, Computing and Statistics (CMCS), University of Ferrara, via Machiavelli 30, 44121 Ferrara, ITALY (giacomo.dimarco@unife.it).} \and
	F. Ferrarese, \thanks{Department of Mathematics and Computer Science \& Center for Modeling, Computing and Statistics (CMCS), University of Ferrara, via Machiavelli 30, 44121 Ferrara, ITALY (federica.ferrarese@unife.it).} \and
	L. Pareschi \thanks{
		Maxwell Institute for Mathematical Sciences and Department of Mathematics,
		School of Mathematical and Computer Sciences (MACS),
		Heriot-Watt University, Edinburgh, UK (L.Pareschi@hw.ac.uk), Department of Mathematics and Computer Science \& Center for Modeling, Computing and Statistics (CMCS), University of Ferrara, via Machiavelli 30, 44121 Ferrara, ITALY (lorenzo.pareschi@unife.it).
	}
}
\date{}
\begin{document}
		\maketitle
	\begin{abstract}
			Magnetic fusion aims to confine high-temperature plasma within a device, enabling the fusion of deuterium and tritium nuclei to release energy. Due to the very large temperatures involved, it is essential to isolate the plasma from the device walls to prevent structural damage and the external magnetic fields play a fundamental role in achieving this confinement. In realistic settings, the physical mechanisms governing plasma behavior are highly complex, involving numerous uncertain parameters and intricate particle interactions, such as collisions, that significantly affect both confinement efficiency and overall stability. In this work, we address particularly these challenges by proposing a robust feedback control strategy designed to steer the plasma towards a desired spatial region, despite the presence of uncertainties. From a modeling perspective, we consider a collisional plasma described by a Vlasov–Poisson–BGK system, which accounts for a self-consistent electric field and a strong external magnetic field, while incorporating uncertainty in the model. A key feature of the proposed control strategy is its independence from the random parameter, making it particularly suitable for practical applications. A series of numerical simulations confirms the effectiveness of our approach and demonstrates the ability of external magnetic fields to successfully confine plasma away from the device boundaries, even in the presence of uncertain conditions.
	\end{abstract}	
{\bf Keywords:} Vlasov-Poisson system, collisional plasma, instantaneous control, magnetic confinement, uncertainty quantification, particle methods.\\
\textbf{Mathematics Subject Classification}: 35Q83, 82D10, 65C30, 35Q93, 65M75

\tableofcontents

\section{Introduction}
In recent years, considerable effort has been devoted to the development of advanced numerical methods aimed at addressing the complex challenges arising in plasma physics simulations \cite{cheng1976integration,crouseilles2010conservative,degond2017asymptotic, filbet2003numerical,Dimarco2015}. A particular focus has been placed on the study of magnetized plasmas, due to their relevance in fusion energy applications, notably in confinement devices such as Tokamaks and Stellarators \cite{sonnendrucker1999semi,degond2017asymptotic,Stellarators}.
These devices rely on complex magnetic field configurations to confine and stabilize plasmas at very high temperatures. Reaching and maintaining optimal plasma conditions such as temperature, density, and confinement time is essential for the success of fusion experiments. However, the intrinsic complexity of plasma dynamics, characterized by turbulence, nonlinearity, and rapid transitions, combined with the influence of magnetic fields, poses important challenges both from the technical as well as from the simulation point of view \cite{crouseilles2004numerical,Degond2013}. Addressing these issues demands the design and the use of new numerical schemes and multi-scale modeling strategies capable of accurately capturing the evolution of magnetized plasmas \cite{belaouar2009asymptotically,blum1989numerical,cheng2014discontinuous,crouseilles2016multiscale, Chac2023}.
Additionally, the presence of uncertainty, ranging from limited knowledge of parameter settings, measurement errors in the magnetic field, the inability to define the exact initial configuration of the plasma, or the lack of accurate models to describe interactions with boundaries, to name a few, further complicates the analysis of the system. Nevertheless, such sources of uncertainty must be accounted for in the simulations \cite{medaglia2023stochastic, medaglia2024particle, dimarco2019multi, dimarco2020multiscale, Jin2017, yudin2023uq}.

In this work, to describe the time evolution of the plasma flow under uncertain conditions, we consider a Vlasov–Poisson--BGK system, which models the dynamics of charged particles subject to a self-consistent electric field, an externally applied magnetic field, and collisional effects described through a relaxation toward thermodynamic equilibrium \cite{saint2000gyrokinetic,crouseilles2016multiscale,crestetto2012kinetic, Hu2022,Chac2015,Dimarco2014}. Specifically, we focus on the time evolution of negatively charged particles, i.e. electrons, while we suppose the ions to constitute a fixed background. In this case, the governing equations read:
\begin{equation}\label{eq:vlasov_poisson}
	\begin{split}
		&\partial_t f(t,\xx,\vv,\zz)+ \vv \cdot \nabla_\xx f(t,\xx,\vv,\zz) + \big( \EE(t,\xx,\zz) +\\
		&\qquad \qquad + \vv \times \BB_{ext}(t,\xx) \big) \cdot \nabla_\vv f(t,\xx,\vv,\zz) = \frac{1}{\varepsilon}\mathcal{Q}(f(t,\xx,\vv,\zz)),\\\vspace{0.2cm}
		&-\Delta_\xx \phi(t,\xx,\zz) = \rho(t,\xx,\zz) - \rho_i(t,\xx,\zz), \qquad \EE(t,\xx,\zz) = -\nabla_\xx \phi(t,\xx,\zz),
	\end{split}
\end{equation}
where the ions density is space homogeneous and immutable, i.e. $\rho_i(t,\xx,\zz)=1$, while the electrons density is defined by
\begin{equation}\label{eq:charge_density_intro}
	\rho(t,\xx,\zz) = \int_{\mathbb{R}^{d_v}} f(t,\xx,\vv,\zz)\,d\vv,
\end{equation}
with $\zz \sim p(\zz)$ a random variable modeling the uncertainty, distributed according to a known probability density function $ p(\zz) $. In this formulation, $ \EE(t,\xx,\zz) $, is the self-consistent electric field obtained from the Poisson equation (third line of \eqref{eq:vlasov_poisson}), $ \BB_{ext}(t,\xx) $ is an external magnetic field, assumed to be independent of the uncertainty.
% and which takes in the rest of the paper the simplified form:
%\begin{equation}
%	\BB_{ext}(t,\xx) = (0,0,B(t,\xx)),
%\end{equation}
%that is, aligned along the $ z $-axis of the chosen reference frame. 
The collision operator $ \mathcal{Q}(f) = \mu(M - f) $ is of relaxation type driving the distribution $ f $ toward a local Maxwellian equilibrium $ M(t,\xx,\vv,\zz) $ and with $\mu$ the collision frequency. The Vlasov equation \eqref{eq:vlasov_poisson} is written in non-dimensional form with $ \varepsilon $ denoting the Knudsen number, which characterizes the relative importance of collisional effects, the Debye length is chosen equal to one in the Poisson equation and thus omitted.

The above depicted dynamics, in the deterministic case, can be approximated by means of different numerical schemes ranging from finite difference, finite volume to semi Lagrangian methods \cite{sonnendrucker1999semi,coughlin2022efficient,crouseilles2016multiscale,crouseilles2004numerical,degond2017asymptotic,russo2009semilagrangian,yang2014conservative,crouseilles2010conservative}. Uncertainty for the Vlasov equation has been considered for instance in \cite{Jin2017,Jin2018,medaglia2023stochastic}. In this paper, we focus on Particle-In-Cell (PIC) schemes \cite{Caflisch2008,Degond2010,Chac2013,Chac2015,albi2024instantaneous} with the aim of designing specific control strategies in the presence of uncertainties which in turns will be handled through stochastic collocation (SC) methods \cite{Xiu}. In PIC simulations, the plasma is represented by a large number of particles that carry physical quantities such as electric charge and mass. These particles move through a computational grid where the electromagnetic fields are solved, \cite{filbet2016asymptotically,filbet2017asymptotically,filbet2003numerical,gu2022hamiltonian}. To tackle the presence of uncertainty, we introduce Gauss-Legendre nodes and use quadrature formulas \cite{Xiu} to compute different quantity of interests such as the expectation and the variance with respect to the uncertain space.
In addition, robust optimization techniques are used to formulate an optimal control problem that accounts for uncertainties \cite{albi2017mean}. Specifically, the goal is to design control strategies that optimize the magnetic field configuration to achieve desired plasma parameters while accounting for the variability in the system. More in details, the control problem we aim to study is given by 
\begin{equation}\label{eq:control_pb2}
	\min_{\BB_{ext}\in \mathcal{B}^{adm}}\mathcal{J}(\BB_{ext}; f^0;f) ,\qquad  \textrm{subject to }\quad \eqref{eq:vlasov_poisson},
\end{equation} 
where $\mathcal{B}^{adm}$ is a set of admissible controls, $f^0$ the initial datum, and $\mathcal{J}(\cdot)$ is a functional which reads as follows
\begin{equation}\label{eq:control_pb}
	\begin{aligned}
		\mathcal{J}(\BB_{ext};f^0;f)= & \int_{0}^{t_f} \left( \mathcal{P}[\mathcal{D}(f,\psi)(t,\zz)] + \right. \\& \left.  + \frac{\gamma}{2} \mathcal{P}\Big[\int_{\Omega}  |\BB_{ext}(t,\xx)|^2  f(t,\xx,\vv,\zz) d\xx d\vv\Big] \right)dt
	\end{aligned}
\end{equation}
where 
\begin{equation}\label{eq:D_continuos} 
	\mathcal{D}(f,\psi)(t,\zz) = \int_{\Omega}    \psi(\xx,\vv,\zz) \left( f(t,\xx,\vv,\zz) - \hat{f}(t,\xx,\vv)  \right) d\xx d\vv ,
\end{equation}
is a running cost function, $\hat{f}$ being a given target distribution discussed next, $\psi(\xx,\vv,\zz)$ a function of the state variables, $\mathcal{P}(\cdot)$ a statistical operator counting for the uncertainties, $\Omega=\Omega_x \times \Omega_v$, $t_f$ a final prescribed time, and  $\gamma$ a weight penalizing the magnitude of the control given by the external magnetic field $\BB_{ext} $. The scope of the functional in \eqref{eq:control_pb} is to force the distribution function or its moments towards desired values through the choice of the function $\psi(\xx,\vv,\zz)$. Similar problems have been investigated from an analytical perspective in recent literature, in the absence of collisions and uncertainties, see, for example, \cite{caprino2012time, knopf2020optimal}. From a numerical standpoint, related approaches have also been explored in \cite{bartsch2024controlling, einkemmer2024suppressing, albi2024instantaneous, EINKEMMER2025}. In particular, in \cite{albi2024instantaneous} the authors of the present article, introduced a piecewise spatial control strategy based on the external magnetic field to confine a plasma governed by the Vlasov–Poisson model. The control targeted both the position and velocity of charged particles and was modeled as spatially constant but time-dependent, applied over a finite set of spatial cells. This setup reflects a physically realistic configuration, in which arbitrarily localized magnetic field values are not admissible. To design an effective instantaneous control strategy, all these constraints were explicitly incorporated into the control functional (see \ref{app:old_control} for further details).

The main objective of this work is to extend the results of \cite{albi2024instantaneous} and to develop a new optimal instantaneous control strategy in the presence of uncertainty. To the best of our knowledge, this is the first study addressing the control of out-of-equilibrium plasmas characterized by uncertain parameters. In greater detail, unlike \cite{albi2024instantaneous}, we first derive a pointwise instantaneous control that acts individually on each particle. Building on this fine-scale information, we then construct a coarse-grained control by averaging the local control fields obtained from the minimization procedure. This yields a piecewise constant-in-space, time-dependent magnetic control field, designed to approximate the effect of the optimal microscopic control over a realistic spatial configuration and in the presence of uncertainties, as motivated in the following.  
We demonstrate that this approach allows for effective steering of the plasma toward desired configurations, without requiring highly complex magnetic field structures.

The remainder of the paper is organized as follows.
In Section~\ref{sec:problem_setting}, we formulate the control problem within a two-dimensional setting, presenting its general structure and the modeling framework adopted.
In Section~\ref{sec:num_methods}, we introduce the numerical methods used to solve the Vlasov–Poisson system under uncertainty, with particular emphasis on Particle-In-Cell schemes and the Gauss–Legendre quadrature for handling stochastic parameters.
In Section~\ref{sec:control}, we discretize the control problem and derive a strategy capable of managing both uncertainty and collisional effects.
Section~\ref{sec:num_esp} presents a series of numerical experiments that validate the effectiveness of the proposed method.
In Section~\ref{sec:conclusion}, we summarize the main results and outline possible directions for future work. Finally, the   \ref{app:old_control} is devoted to the extension of the control strategy proposed in \cite{albi2024instantaneous} to the setting with uncertainty, and to a comparison with the new approach developed in this work, highlighting the improvements and the enhanced performance achieved by the latter. 
%Appendix~\ref{app:gauss_legendre} provides a numerical proof of the spectral convergence of the Gauss–Legendre method within the proposed framework.

\section{Problem setting and numerical methods} \label{sec:problem_setting_numdisc}
In a Tokamak device, plasma confinement is governed by a magnetic field that constrains the motion of charged particles. While particles move almost freely along the field lines, their perpendicular motion is restricted to rapid Larmor gyrations. The magnetic configuration is determined by two components: a dominant toroidal field, generated by coils encircling the central axis of the torus, and a weaker poloidal field, produced both by external coils and by the plasma current. Although smaller in magnitude, the poloidal field plays a crucial role in confinement, since its combination with the toroidal component yields helical magnetic field lines that prevent plasma losses \cite{wesson2011tokamaks}.  

To simplify the analysis, we consider a two-dimensional phase-space framework. In this context, we assume that the magnetic field lines are approximately horizontal, so that the magnetic field component under consideration is  oriented orthogonally to the $xy$-plane. This assumption reflects the predominance of the toroidal component over the poloidal one, and allows us to capture the essential transport dynamics of the plasma while maintaining a tractable model for an axisymmetric toroidal configuration. 

In this section, we first discuss the setting and then we propose a suitable numerical discretization for the Vlasov--Poisson--BGK model with uncertainties. In the following section, based on the discussed discretization, we introduce and study an instantaneous control strategy. 

\subsection{Control of the Vlasov--Poisson--BGK model with uncertainties}\label{sec:problem_setting}
In the proposed configuration, the reference frame $(\xx_\perp, \vv_\perp)$ is chosen such that the external magnetic field remains always orthogonal to the plane in which the time evolution of the single-species plasma distribution function takes place, automatically ensuring that the divergence-free condition is satisfied. This gives
\begin{equation}\label{eq:density}
	f = f(t, \xx_\perp, \vv_\perp, \zz), \qquad f: \mathbb{R}_+ \times \Omega_x \times \Omega_v \times \Omega_z,
\end{equation}
where $\Omega_z \subseteq \mathbb{R}^{d_z}$ denotes the uncertainty space, $\Omega_x \subseteq \mathbb{R}^{d_x}$, $\Omega_v \subseteq \mathbb{R}^{d_v}$ the space and velocity domain, and where we set from now on  $d_x = d_v = 2$.
The evolution of $f(t, \xx_\perp, \vv_\perp, \zz)$ is then governed by the following collisional Vlasov-type equation
\begin{equation}\label{eq:Vlasov}
	\begin{split}
		&\partial_t f + \vv_\perp \cdot \nabla_{\xx_\perp} f + F(t, \xx_\perp, \vv_\perp, \zz) \cdot \nabla_{\vv_\perp} f = \frac{1}{\varepsilon} \mathcal{Q}(f), \\
		&f(0, \xx_\perp, \vv_\perp, \zz) = f_0(\xx_\perp, \vv_\perp, \zz),
	\end{split}
\end{equation}
where  $f_0$ denotes the initial distribution function, which depends on the uncertain parameter $\zz$, assumed to follow a given probability distribution $ p(\zz)$. For brevity, we omitted the explicit dependence of $f$ on $t, \xx_\perp,\vv_\perp$, and  $\zz$ in previous expression \eqref{eq:Vlasov}. The force field acting on the particles is given by:
\begin{equation}
	F(t, \xx_\perp, \vv_\perp, \zz) = \EE(t, \xx_\perp, \zz) + \vv_\perp \times \BB_{ext}(t, \xx_\perp),
\end{equation}
where $\BB_{ext}$ is the external magnetic field, assumed to be independent of the uncertainty $\zz$ as already stated, and $\EE$ is the electric field derived from the solution of the Poisson equation:
\begin{equation}\label{eq:Poisson}
	\EE(t, \xx_\perp, \zz) = -\nabla_{\xx_\perp} \phi(t, \xx_\perp, \zz), \qquad -\Delta_{\xx_\perp} \phi(t, \xx_\perp, \zz) = \rho(t, \xx_\perp, \zz)-1,
\end{equation}
with $\phi$ the electric potential, and the charge density defined as:
\begin{equation}\label{eq:charge_density}
	\rho(t, \xx_\perp, \zz) = \int_{\Omega_v } f(t, \xx_\perp, \vv_\perp, \zz)\, d\vv_\perp.
\end{equation}

The right-hand side of equation \eqref{eq:Vlasov}, $ \mathcal{Q}(f)$, describes the collisions among particles. In this work, we adopt a BGK-type operator defined as:
\begin{equation}\label{eq:BGK_operator}
	\mathcal{Q}(f) = \mu \left( \mathcal{M}(t, \xx_\perp, \vv_\perp, \zz) - f(t, \xx_\perp, \vv_\perp, \zz) \right),
\end{equation}
where $\mu$ is the collision frequency which in general may depend upon the macroscopic quantities and the uncertain parameters, i.e.  $\mu=\mu(\rho,T,\zz)$, and $\mathcal{M}$ is the local Maxwellian equilibrium distribution given by:
\begin{equation}\label{eq:maxwellian}
	\mathcal{M}(t, \xx_\perp, \vv_\perp, \zz) =  \frac{\rho(t, \xx_\perp, \zz)}{2\pi T(t, \xx_\perp, \zz)}  \exp\left( -\frac{|\vv_\perp - \UU(t, \xx_\perp, \zz)|^2}{2T(t, \xx_\perp, \zz)} \right),
\end{equation}
with $\UU$, and $T$ denoting the mean velocity and temperature, respectively, computed as:
\begin{equation}\label{eq:moments}
	\begin{split}
		\UU(t, \xx_\perp, \zz) &= \frac{1}{\rho(t, \xx_\perp, \zz)} \int_{\Omega_v} \vv_\perp f(t, \xx_\perp, \vv_\perp, \zz)\, d\vv_\perp, \\
		T(t, \xx_\perp, \zz) &= \frac{1}{2\rho(t, \xx_\perp, \zz)} \int_{\Omega_v} | \vv_\perp - \UU(t, \xx_\perp, \zz) |^2 f(t, \xx_\perp, \vv_\perp, \zz)\, d\vv_\perp.
	\end{split}
\end{equation}

The final goal is to control the dynamics described by \eqref{eq:Vlasov}-\eqref{eq:Poisson} using an external magnetic field to configure charged particles into a desired setting while keeping them as far as possible from the walls. To achieve this, we start from the following continuous control problem
\begin{equation}\label{eq:control_pb_continuos}
	\min_{\BB^{ext}\in \mathcal{B}_{adm}}    \mathcal{J}(\BB_{ext};f^0,f) ,\qquad 
	\textrm{s.t.}~\eqref{eq:Vlasov}-\eqref{eq:Poisson} \ \text{are satisfied}
\end{equation}
where 
\begin{equation}\label{eq:functional_continuos}
	\begin{split}
		&\mathcal{J}(\BB_{ext};f^0,f)  = \int_{0}^{t_f}\mathcal{P}\ \left[\sum_{\ell \in \{\text{x},\text{v}\}}\mathcal{D}(f,\psi_\ell)(t,\zz) \right] dt +\cr
		&\qquad + \frac{\gamma}{2} \int_{0}^{t_f}\mathcal{P}\left[\int_{\Omega} |\BB_{ext}(t,\xx_\perp) |^2 f(t,\xx_\perp,\vv_\perp,\zz)d\xx_\perp d\vv_\perp \right]dt,
	\end{split}
\end{equation}
with $\Omega = \Omega_x \times \Omega_v$, and 
where %$\gamma>0$ is a penalization term, $\mathcal{B}_{adm}$ is a set of admissible controls,  $\mathcal{P}[\cdot]$ is a suitable statistical operator taking into account the presence of the uncertainties, and 
$\mathcal{D}(\cdot)$ aims at enforcing a specific configuration of the distribution function and of its moments, that is 
\begin{equation}\label{eq:D}
	\begin{split}
		\mathcal{D}(f,\psi_\ell)(t,\zz) = &\frac{\alpha_\ell}{2} | m(f,\psi_\ell)(t,\zz) - \hat{\psi}_\ell |^2 + \frac{\beta_\ell}{2} m_\sigma(f,\psi_\ell)(t,\zz),
	\end{split}
\end{equation}
with $\alpha_\ell,\beta_\ell \geq 0$ weighting parameters for $\ell = \{\text{x},\text{v}\}$, and
\begin{equation}\label{eq:m_sigma} 
	\begin{split}
		&m(f,\psi_\ell)(t,\zz) = \int_{\Omega} \psi_\ell(\xx_\perp,\vv_\perp,\zz) f(t,\xx_\perp,\vv_\perp,\zz) d\xx_\perp d\vv_\perp,\\
		& m_{\sigma}(f,\psi_\ell) (t,\zz) = \int_{\Omega} | \psi_\ell(\xx_\perp,\vv_\perp,\zz)-m(f,\psi_\ell)(t,\zz)|^2 f(t,\xx_\perp,\vv_\perp,\zz) d\xx_\perp d\vv_\perp.
	\end{split}
\end{equation}
The first equation in \eqref{eq:m_sigma} aims at enforcing the moments of the distribution function to assume some given values in different regions of the physical space through concentration of the distribution function at some given target $\hat \psi_\ell$. The second equation tries to minimizes the distance from the average value $m(f,\psi_\ell)$ both in space as well as in velocity space around these targets. 

The statistical operator $\mathcal{P}[\cdot]$ is introduced to ensure the robustness of the control strategy with respect to the uncertainty parameter $ \zz$. One natural choice for  $\mathcal{P}[\cdot]$ is the mathematical expectation in the random space, defined as:
\begin{equation}\label{eq:R_mean}
	\mathcal{P}[\varphi_f(\cdot,\zz)] = \int_{\Omega_z} \varphi_f(\cdot,\zz)\, p(\zz)\, d\zz,
\end{equation}
where $\varphi_f(\cdot,\zz)$ is a measurable quantity depending on the uncertainty $ \zz$.
An alternative measure of robustness that we consider in this work is based on the worst-case scenario idea, which corresponds to minimizing a given macroscopic quantity related to the plasma evolution over all admissible realizations of $\zz$. In this case, the operator takes the form:
%\begin{equation}\label{eq:R_max}
%	\mathcal{P}[\varphi_f(\cdot,\zz)] = \max_{\zz} \varphi(\cdot,\zz),
%\end{equation}
%where $\varphi(t, \zz)$ may represent, for instance, the plasma temperature at the boundary or other critical physical quantities.
\begin{equation}\label{eq:R_max}
	\mathcal{P}[\varphi_f(\cdot,\zz)]=\varphi_f(\cdot,\zz_0), \qquad \text{with }~  \zz_0 = \arg \max_\zz T_b(\cdot,\zz),
\end{equation} 
being $T_b(\cdot,\zz)$ the temperature close to the boundary for a fixed value of the uncertainty. 

\subsection{A stochastic collocation particle-MC method}\label{sec:num_methods}
We focus now on the numerical solution of the Vlasov equation \eqref{eq:Vlasov} in the presence of uncertainty. The proposed numerical approach combines Particle-In-Cell (PIC) techniques for the transport part \cite{chacon2016,filbet2016asymptotically} with Direct Simulation Monte Carlo (DSMC) methods to handle the collisional term \cite{pareschi2001time,pareschi2005numerical,Caflisch2008,Dimarco2010}.
To incorporate uncertainty, several strategies have been proposed in the literature about Vlasov or related kinetic type equations with random inputs, ranging from intrusive Stochastic Galerkin methods \cite{Jingwei2018,Chung2020,Xiao2021,Dimarco2024} to non-intrusive techniques such as Monte Carlo sampling \cite{dimarco2019multi,dimarco2020multiscale,Jin2024}.  In this work we focus on an alternative non-intrusive approach: the stochastic collocation (SC) method \cite{Xiu,NTW} which up to our knowledge it has never been used for treating uncertainties in the context of Vlasov-type equations. 

For a quantity of interest $ \varphi_f(t, \xx_\perp, \vv_\perp, \zz) $, we first define its expected value and variance in the uncertain space as:
\begin{equation}\label{eq:mean}
	\mathbb{E}[\varphi_f](t, \xx_\perp, \vv_\perp) = \int_{\Omega_z} \varphi_f(t, \xx_\perp, \vv_\perp, \zz) p(\zz)\, d\zz,
\end{equation}
\begin{equation}\label{eq:var}
	\sigma^2[\varphi_f](t, \xx_\perp, \vv_\perp) = \int_{\Omega_z} \left(\varphi_f(t, \xx_\perp, \vv_\perp, \zz) - \mathbb{E}[\varphi_f](t, \xx_\perp, \vv_\perp)\right)^2 p(\zz)\, d\zz.
\end{equation}
Additional observable can be defined in the same manner. Typical quantities of interest include the distribution function $f$ itself, as well as its moments: density, mean velocity, and temperature for example. In the latter cases, the dependence on $ \vv_\perp $ is dropped in \eqref{eq:mean}-\eqref{eq:var}. %Let us now start from the description of the classical Monte Carlo (MC) method for uncertainty quantification which is conceptually simple and easy to implement. We successively explain the main differences with the collocation method. In the case where uncertainty affects the initial data $ f_0(\xx_\perp, \vv_\perp, z) $, the MC algorithm proceeds as follows:
We then select $ N_z$ quadrature nodes $ \{ z_k \}_{k=1}^{N_z} $ (where we suppose to use only one index to describe the multidimensional space $\Omega_z$) with corresponding weights $ \{ w_k \} $ for a given quadrature rule adapted to the distribution $p(\zz)$ and solve $ N_z $ independent deterministic Vlasov problems. We also suppose, to easily describe the proposed approach, that uncertainty affects only the initial data, i.e. $ f_0(\xx_\perp, \vv_\perp, \zz) $. For each node we have then:
\begin{equation}\label{eq:vlasov_zi} 
	\partial_t f^k(t,\xx_\perp,\vv_\perp) + \vv_\perp \cdot \nabla_{\xx_{\perp}} f^k + F^k(t,\xx_\perp,\vv_\perp) \cdot \nabla_{v_\perp} f^k 
	= \frac{1}{\varepsilon} \mathcal{Q}(f^k),
\end{equation}
being  $ F^k(t,\xx_\perp,\vv_\perp) = (\EE^k(t,\xx_\perp) + \vv_\perp \times \BB_{ext}(t,\xx_\perp))$, 
with initial data $f_0^k = f_0(\cdot, \cdot, z_k)$. The SC method to approximate expected value and variance in the uncertain space is then obtained by the following algorithm:
\begin{alg}[Stochastic collocation for the Vlasov equation with random inputs]~ \label{alg:MC_2}
	\begin{enumerate}
		\item Consider $ N_z $ collocation nodes $ z_k $ and weights $ w_k $ using Gauss quadrature according to $p(\zz)$.
		\item For each node $ z_k $, solve the deterministic Vlasov problem with the preferred numerical technique to get $ \tilde f^{k,n}$.
		\item Compute the expected value and variance of the quantity of interest. For example in the case of the computation of expectation and variance for the distribution function, one has
		\[
		E_{N_z}[\tilde f^n] = \sum_{k=1}^{N_z} w_k \tilde f^{n,k}, \quad 
		\sigma^2_{N_z}[\tilde f^{n}]= \sum_{k=1}^{N_z} w_k \left(\tilde f^{n,k}- E_{N_z}[\tilde f^{n}]\right)^2.
		\]
		Other statistics with respect the random space and related to the distribution function or its moments can be computed in the same manner.
	\end{enumerate}
\end{alg}
This method benefits from spectral convergence when the solution depends smoothly on the uncertain parameter $\zz$, i.e., the error decays faster than any power of $ N_z^{-1} $, depending on regularity of the solution. The typical error estimate reads, in the case in which the other discretization errors are neglected, as:
\[
|\mathbb{E}[f] - E_{N_z}[\tilde f])| \leq C p^{-2N_z}
\]
where $C$ and $p$ are two constants. 

%We are now ready to discuss the other numerical discretization (see Appendix \ref{app:gauss_legendre} for further details). 
In order to approximate the $ N_z $ deterministic collisional Vlasov equations \eqref{eq:vlasov_zi}, we rely on a particle method. This approach consists first in approximating the initial condition $ f_0(\xx_\perp, \vv_\perp) $ in \eqref{eq:vlasov_zi} by a discrete sum of Dirac masses (we omit from now on the apex $k$ indicating the point in the random space):
\begin{equation}\label{eq:PIC}
	f_N^0(\xx_\perp, \mathbf{v}_\perp) := \sum_{m=1}^{N} \omega_m \delta\left(\mathbf{x}_\perp - \mathbf{x}_m^0\right) \delta\left(\mathbf{v}_\perp - \mathbf{v}_m^0\right),
\end{equation}
where $ \left(\mathbf{x}_m^0, \mathbf{v}_m^0\right)_{1 \leq m \leq N} $ represent the initial positions and velocities of the particles, and $ \omega_m $ are the associated particle weights.
Next, we introduce a spatial discretization grid composed of $ M_c $ cells $ \mathcal{C}_j $, $ j = 1, \ldots, M_c $, used to compute the electric field, together with a time discretization of the interval $[0, t_f]$ with step size $ h > 0 $.
In this setting, the approximate solution of the Vlasov equation at time level $ n $ is given by the empirical density function:
\begin{equation}\label{eq:approx_density}
	f_N^{n}(\xx_\perp, \vv_\perp) = \sum_{m=1}^{N} \omega_m \delta(\xx_\perp - \xx_m^{n}) \delta(\vv_\perp - \vv_m^{n}),
\end{equation}
where the particle positions and velocities at time level $ n $ are updated through a splitting procedure between the collision and transport steps.

The collision step is defined by:
\begin{equation}\label{collision}
	\begin{cases}
		\partial_t f_N^* = \dfrac{1}{\varepsilon} \mathcal{Q}(f_N^*),\\
		f_N^*(0, \xx_\perp, \vv_\perp) = f_N^n(\xx_\perp, \vv_\perp),
	\end{cases}
\end{equation}
while the transport step is given by:
\begin{equation}\label{vlasov}
	\begin{cases}
		\partial_t f_N^{**} + \vv_\perp \cdot \nabla_{\xx_\perp} f_N^{**} + (\EE^{**} + \vv_\perp \times \BB_{ext}^{**}) \cdot \nabla_{\vv_\perp} f_N^{**} = 0,\\
		f_N^{**}(0, \xx_\perp, \vv_\perp) = f_N^*(\xx_\perp, \vv_\perp),
	\end{cases}
\end{equation}
thus yielding the solution at time $ t^{n+1} $:
\[
f_N^{n+1}(\xx_\perp, \vv_\perp) = \mathcal{T}_{h}\left( \mathcal{Q}_{h}(f_N^{n})(\xx_\perp, \vv_\perp) \right).
\]
The discretization of the collision step gives:
\begin{equation}\label{eq:collision_step}
	f_N^*(\xx_\perp, \vv_\perp) = e^{-\nu h} f_N^{n}(\xx_\perp, \vv_\perp) + (1 - e^{-\nu h}) \mathcal{M}_N^n(\xx_\perp, \vv_\perp),
\end{equation}
where $ \nu = \mu/\varepsilon $ and $ \mathcal{M}_N^n(\cdot) $ is the local Maxwellian distribution \eqref{eq:maxwellian} computed from the knowledge of the moments of $f$ and $^*$ indicates that \eqref{eq:collision_step} furnishes the solution of the sole collisional part \eqref{collision} which will be then used as an initial data for the solution of the Vlasov equation \eqref{vlasov}. It is important to observe that the collisional term modifies only the particle velocities, while preserving the first three moments of $ f $, namely mass, momentum, and energy. This means that $\mathcal{M}$ is constant over the relaxation step \eqref{vlasov}. From a probabilistic viewpoint, relation \eqref{eq:collision_step} is interpreted as follows \cite{MC,Caflisch}: with probability $ e^{-\nu h} $, the velocity $ \vv_m $ of a particle remains unchanged, while with probability $ 1 - e^{-\nu h} $, it is replaced by a velocity sampled from the local Maxwellian. This can be expressed as:
\begin{equation}\label{eq:discr_collision_step}
	\vv_m^{*} = \chi(\eta_m < e^{-\nu h}) \vv_m^n + (1 - \chi(\eta_m < e^{-\nu h})) \left( \UU_m^n + \xi_m \sqrt{T_m^n} \right),
\end{equation}
where $ \eta_m \sim \mathcal{U}([0,1]) $ is uniformly distributed, $ \xi_m \sim \mathcal{N}(0,1) $ is a standard normal random variable, and $ \chi(\cdot) $ denotes the indicator function.
Here, $ \UU_m^n $ and $ T_m^n $ are defined as:
\[
\UU_m^n = \sum_{j=1}^{M_c} \tilde{\UU}_j^n \chi(\xx^n_m \in \mathcal{C}_j), \qquad
T_m^n = \sum_{j=1}^{M_c} \tilde{T}_j^n \chi(\xx^n_m \in \mathcal{C}_j),
\]
where:
\begin{equation}\label{eq:discr_moments}
	\begin{split}
		\tilde{\UU}_j^n &= \dfrac{1}{\tilde{\rho}_j^n} \sum_{m=1}^{N} \omega_m \vv_m^n \chi(\xx_m^n \in \mathcal{C}_j),\\
		\tilde{T}_j^n &= \dfrac{1}{2 \tilde{\rho}_j^n} \sum_{m=1}^{N} \omega_m | \vv_m^n - \tilde{\UU}_j^n |^2 \chi(\xx_m^n \in \mathcal{C}_j),
	\end{split}
\end{equation}
represent the local momentum and temperature at time $ t^n $ within cell $ \mathcal{C}_j $, and
\[
\tilde{\rho}_j^n = \sum_{m=1}^{N} \omega_m \chi(\xx_m^n \in \mathcal{C}_j)
\]
is the local particle density.

The transport phase in phase space is finally recovered by approximating the particle trajectories according to the characteristic curves of the Vlasov equation:
\begin{equation}\label{eq:char_curves}
	\begin{split}
		\frac{d\xx_m(t)}{dt} &= \vv_m(t),\\ %\qquad \xx_m(0) = \xx_m^0,\\
		\frac{d\vv_m(t)}{dt} &= \EE(t, \xx_m)+\vv_m\times \BB_{ext}(t, \xx_m).% \qquad \vv_m(0) = \vv_m^0.
	\end{split}
\end{equation}
Here, the electric field $ \EE(t, \xx) $ is obtained by approximating the Poisson equation with a finite difference method on the spatial grid. Then the value of $ \EE $ at the particle position $ \xx_m $ is taken as the value at the center of the cell $ \mathcal{C}_j $ containing $ \xx_m $. At each time step, the approximated density, needed to solve the Poisson equation, is reconstructed over the spatial cells $ \mathcal{C}_j $ from the updated particle positions and velocities.
To discretize \eqref{eq:char_curves}, we use a semi-implicit first-order scheme \cite{filbet2016asymptotically}:
\begin{equation}\label{eq:filbet_scheme}
	\begin{split}
		\xx_m^{n+1} &= \xx_m^n + h \vv_m^{n+1},\\
		\vv_m^{n+1} &= \vv_m^* + h \left( \vv_m^{n+1} \times \BB_{ext}(t^n, \xx_m^n) + \EE(t^n, \xx_m^n) \right),
	\end{split}
\end{equation}
where $ \BB_{ext}(t^n, \xx_m^n) $ denotes the external magnetic field computed at the particle location, which will be determined in the control problem discussed in the next Section \ref{sec:control}. In the rest of the paper we will drop for simplicity the $_\perp$ in the notation and consequently we will write $\xx$ instead of $\xx_\perp$ and $\vv$ instead of $\vv_\perp$.

\section{Robust feedback control by an external magnetic field}\label{sec:control} 
% We consider another space discretization grid composed of $N_c$ cells of size $\Delta^c x\times \Delta^c y$, such that $\Delta^c x > \Delta x$ and $\Delta^c y > \Delta y$, being $\Delta x$ and $\Delta y$ the size of the space discretization grid on which the density function is reconstructed. 

%We start by computing the optimal magnetic field $B_m$ at each particle $m = 1, \ldots, N$ individually, followed by interpolating across the predefined cells $C_k$, $k=1,\ldots,N_c$. Consequently, the magnetic field at a particle's position $\mathbf{x}_m$ corresponds to the magnetic field value assumed within the cell $C_k$ to which the particle belongs.

We now describe the robust feedback control strategy for the uncertain Vlasov--Poisson--BGK system introduced in Section \ref{sec:problem_setting}. We begin our analysis by computing the external magnetic field $\BB_{\text{ext}}$ which is, by hypothesis orthogonal to the plane identified by $\xx$ and consequently we set $\BB_{\text{ext}}=(0,0,B)$. 

We discretize the time interval $[0, t_f]$ into subintervals of length $h$, and solving the following sequence of instantaneous optimal control problems:
\begin{equation}\label{eq:control_pb_discretized} 
	\begin{split}
		&\min_{B \in \mathcal{B}_{adm}}    \mathcal{J}(B;f_N^{0},f_N): =  \int_{t}^{t+h}  \mathcal{P}\left[\sum_{\ell \in \{\text{x},\text{v}\}}  \mathcal{D}(f_N,\psi_\ell)(\tau,\zz)\right] d\tau + \\& \qquad \qquad \quad  + \frac{\gamma}{2} \int_{t}^{t+h}  \mathcal{P} \left[ \int_{\Omega}  |B(\tau,\xx)|^2  f_N(\tau,\xx,\vv,\zz) d\xx d\vv\right]   d\tau,
	\end{split}
\end{equation}
where $\mathcal{P}$ is defined as in \eqref{eq:R_mean}–\eqref{eq:R_max}, and ${B}_{adm} = \{ B | B \in [-M,M], M>0\}$, with $M$ denoting the maximum admissible magnetic field strength. The function $\mathcal{D}(f_N, \psi_\ell)(\tau, \zz)$ is defined in \eqref{eq:D}, and $f_N$ denotes the empirical density. 

Applying a semi-implicit rectangle rule to approximate the time integrals in \eqref{eq:control_pb_discretized}, we obtain the discrete cost functional
\eqref{eq:control_pb_discretized}  as follows
\begin{equation}\label{eq:J_discretized_h} 
	\begin{aligned}
		\mathcal{J}^h(B;f_N^{0},f_N) &= h   \mathcal{P}\left[ \sum_{\ell \in \{\text{x},\text{v}\}} \mathcal{D}(f^{n+1}_N,\psi_\ell)(\tau,\zz)\right] + \\& + \frac{h \gamma}{2}  \mathcal{P} \left[ \int_{\Omega}  |B^{n+1}(\xx)|^2  f_N^{n+1}(\xx,\vv,\zz) d\xx d\vv\right].
	\end{aligned}
\end{equation}

\subsection{Derivation of an approximated feedback control}

Our next step is to insert the explicit form of the empirical distribution \eqref{eq:approx_density} into \eqref{eq:J_discretized_h}, under the constraint imposed by the semi-implicit dynamics \eqref{eq:collision_step}–\eqref{eq:filbet_scheme}. However, the semi-implicit scheme gives rise to strong nonlinear couplings, which make the associated optimality system analytically intractable and prevent a closed-form computation of the control.
To overcome this difficulty, we introduce a simplified explicit integrator. In contrast to the semi-implicit method, the explicit scheme allows us to compute the optimal magnetic field in closed form at each particle location. This field is then inserted into the semi-implicit scheme for the actual evolution of the dynamics. In this way, we preserve the accuracy and stability properties of the semi-implicit scheme, while using the explicit scheme only as a tool for control synthesis, since it is not suitable for the time evolution of the dynamics due to its well-known stability limitations. 

From a practical view point, we replace the numerical scheme \eqref{eq:collision_step}–\eqref{eq:filbet_scheme} with the explicit scheme:
\begin{equation}\label{eq:explicit_scheme}
	\begin{split}
		&x_m^{n+1}(\zz) = x_m^n(\zz) + h v_{x_m}^{n+1}(\zz),\\
		&y_m^{n+1}(\zz) = y_m^n(\zz) + h v_{y_m}^{n+1}(\zz),\\
		&v_{x_m}^{n+1}(\zz) = v_{x_m}^*(\zz)+h v_{y_m}^*(\zz)B^{n+1}_m + h E_{x_m}^n(\zz),\\
		&v_{y_m}^{n+1}(\zz) = v_{y_m}^*(\zz) - h v_{x_m}^*(\zz) B^{n+1}_m + h E_{y_m}^n(\zz),\\
		& v_{x_m}^*(\zz) = \theta_m^{\nu,h} v_{x_m}^n(\zz) + (1-\theta_m^{\nu,h}) (U_{x_m}^n(\zz) +\xi_m \sqrt{T_m^n(\zz)}),\\
		& v_{y_m}^*(\zz) = \theta_m^{\nu,h} v_{y_m}^n(\zz) + (1-\theta_m^{\nu,h}) (U_{y_m}^n(\zz) +\xi_m \sqrt{T_m^n(\zz)}),
	\end{split}
\end{equation}
where $B_m$ indicates the value of the parallel part of the magnetic field at the particle position $(x_m,y_m)$ and with $\theta_m^{\nu,h} = \chi(\eta_m<e^{-\nu h})$ being the stochastic jump process parameter used for handling the collision part. 

By direct computation assuming in \eqref{eq:D} $\psi_\emph{x} = y^{n+1}(\zz)$, $\psi_\emph{v} = v_y^{n+1}(\zz)$, $\hat{\psi}_\emph{x} = \hat{y}$,  $\hat{\psi}_\emph{v} = 0$, being $\hat{y}$ a certain function of the particles state related to the fact that we want to avoid the plasma to reach the boundaries by imposing a target average position and a target average velocity, we get 
\begin{equation}\label{eq:J_discr} 
	\begin{split}
		&\mathcal{J}^{h,N}(B_m) =   h\mathcal{P} \left[\sum_{m=1}^N \left(  \frac{\alpha_\emph{x}}{2} \Big \vert y_m^{n+1}(\zz) -\hat{y} \Big \vert^2 + \frac{\beta_\emph{x}}{2} \Big \vert y_m^{n+1}(\zz) -\bar{y}^n(\zz)\Big \vert^2 \right. \right. + \\
		& \qquad +\left. \left. \frac{\alpha_\emph{v}}{2} \Big \vert v_{y_m}^{n+1}(\zz) \Big \vert^2 + \frac{\beta_\emph{v}}{2} \Big \vert v_{y_m}^{n+1}(\zz) -\bar{v}_y^*(\zz)\Big \vert^2 \right) \right]
		+ \frac{h\gamma}{2} \mathcal{P} \left[\sum_{m=1}^N  \vert B_m\vert^2\right],
	\end{split}
\end{equation}
with $\alpha_\emph{x},\alpha_\emph{v}, \beta_\emph{x},\beta_\emph{v}, \gamma \geq 0$, and where 
\begin{equation*}
	\bar{y}^n(\zz) = \frac{1}{N} \sum_{m=1}^N y_m^n(\zz), \qquad 	\bar{v}_y^*(\zz) = \frac{1}{N} \sum_{m=1}^N v_{y_m}^*(\zz).
\end{equation*} 
Now, the following Proposition holds true.
\begin{proposition}\label{prop:istctrl}
	Assume the parameters to scale as 
	\begin{equation}\label{eq:scaling_new}
		\alpha_\emph{x} \rightarrow \frac{\alpha_\emph{x}}{h}, \qquad \beta_\emph{x} \rightarrow \frac{\beta_\emph{x}}{h}, \qquad \gamma \rightarrow \gamma h, \qquad \nu \rightarrow \frac{\nu}{h},
	\end{equation}
	then the feedback control $B_m$ associated to \eqref{eq:J_discr} reads as follows
	\begin{equation}\label{eq:L2_control_new}
		B_m = \mathbb{P}_{[-M,M]}\left( \frac{\mathcal{P}[\mathcal{R}^{m,n}_{\emph{x}}(\zz)  + \mathcal{R}^{m,n}_{\emph{v}}(\zz)] }{\gamma +\mathcal{P}[\mathcal{S}^{m,n}_{\emph{x}}(\zz) + \mathcal{S}^{m,n}_{\emph{v}}(\zz)] } \right), 
	\end{equation}
	where for any $m=1,\ldots,N$, $\gamma>0$, we have 
	\begin{equation}\label{eq:terms_in_B_new}
		\begin{split}
			\mathcal{R}^{m,n}_{\emph{x}}(\zz) = &  \alpha_\emph{x} (y_m^n(\zz) + h(v_{y_m}^{\star,n}(\zz)+hE_{y_m}^n(\zz))-\hat{y})v^{\star,n}_{x_m}(\zz) +  \\ &   +\beta_\emph{x} (y_m^n(\zz) +h(v_{y_m}^{\star,n}(\zz)+hE_{y_m}^n(\zz))-\bar{y}^n(\zz)) v^{\star,n}_{x_m}(\zz),\\
			\mathcal{R}^{m,n}_{\emph{v}}(\zz) = &  \alpha_\emph{v} (v_{y_m}^{\star,n}(\zz) +hE_{y_m}^n(\zz))v^{\star,n}_{x_m}(\zz)  + \\ &   +\beta_\emph{v} (v_{y_m}^{\star,n}(\zz)+hE_{y_m}^n(\zz)-\bar{v}_y^{\star,n}(\zz)) v^*_{x_m}(\zz),\\
			\mathcal{S}^{m,n}_{\emph{x}}(\zz) = &  (\alpha_\emph{x}+\beta_{\emph{x}})  (h v_{x_m}^{\star,n}(\zz))^2,\\
			\mathcal{S}^{m,n}_{\emph{v}}(\zz) = &  (\alpha_\emph{v}+\beta_{\emph{v}})  h ( v_{x_m}^{\star,n}(\zz))^2,
		\end{split}
	\end{equation}
	with $\alpha_{\emph{x}}, \alpha_{\emph{v}}, \beta_{\emph{x}}, \beta_{\emph{v}} \geq0$, $\mathbb{P}_{[-M,M]}(\cdot)$ denoting the projection over the interval $[-M,M]$,  $\mathcal{P}(\cdot)$ as in \eqref{eq:R_mean}-\eqref{eq:R_max}, and with 
	\begin{equation}\label{eq:v_collisions_scaled}
		\begin{split}
			& v_{x_m}^{\star,n}(\zz) = \theta_m^{\nu} v_{x_m}^n(\zz) + (1-\theta_m^{\nu}) (U_{x_m}^n(\zz) +\xi_m \sqrt{T_m^n(\zz)}),\\
			& v_{y_m}^{\star,n}(\zz) = \theta_m^{\nu} v_{y_m}^n(\zz) + (1-\theta_m^{\nu}) (U_{y_m}^n(\zz) +\xi_m \sqrt{T_m^n(\zz)}),
		\end{split}
	\end{equation}
	being $\theta_m  = \chi (\eta_m < e^{-\nu})$, with $\eta \sim \mathcal{U}([0,1])$.
	In the limit $h\to 0$ the control at the continuous level reads, 
	\begin{equation}\label{eq:L2_control_continuos_new}
		B_m(t) =  \mathbb{P}_{[-M,M]}\left(\frac{1}{\gamma}\left(  \mathcal{P}[\mathcal{R}_{\emph{x}}^m(t,\zz)  +  \mathcal{R}_{\emph{v}}^m (t,\zz)]\right)  \right), 
	\end{equation}
	with 
	\begin{equation}
		\begin{split}
			\mathcal{R}_{\emph{x}}^m(t,\zz) = &  [\alpha_\emph{x} (y_m(t,\zz) -\hat{y}(t,\zz)) +\beta_\emph{x} (y_m(t,\zz)-\bar{y}(t,\zz))] v^{\star}_{x_m}(t,\zz) ,\\
			\mathcal{R}_{\emph{v}}^m(t,\zz) = &  [\alpha_\emph{v} v_{y_m}^{\star}(t,\zz) +\beta_\emph{v} (v_{y_m}^{\star}(t,\zz)-\bar{v}_y^*(t,\zz))] v^{\star}_{x_m}(t,\zz),
		\end{split}
	\end{equation}
	being 
	\begin{equation*}
		\begin{split}
			& v_{x_m}^{\star}(t,\zz) = \theta_m^{\nu} v_{x_m}(t,\zz) + (1-\theta_m^{\nu}) (U_{x_m}(t,\zz) +\xi_m \sqrt{T_m(t,\zz)}),\\
			& v_{y_m}^{\star}(t,\zz) = \theta_m^{\nu} v_{y_m}(t,\zz) + (1-\theta_m^{\nu}) (U_{y_m}(t,\zz) +\xi_m \sqrt{T_m(t,\zz)}),
		\end{split}
	\end{equation*}
	where  $\theta_m  = \chi (\eta_m < e^{-\nu})$, with $\eta \sim \mathcal{U}([0,1])$.
	%	Then, for any $k=1,\ldots,N_c$ we define the piecewise constant control $\hat{B}_k$ over each cell $C_k$ by interpolating the values $B_m$  over each cell $C_k$. For convenience, we denote by  
	%	\begin{equation}\label{eq:L2_control_interp}
	%		\BB = \mathcal{I}^\ell(\BB_N,\xx_{c}),
	%	\end{equation}
	%the vector $\BB=[\hat{B}_1,\ldots,\hat{B}_{N_c}]$ containing the piecewise constant controls $\hat{B}_k$,
	%	where $\mathcal{I}^\ell(\cdot)$ is an interpolation function of order $\ell$, $\xx_{c}$ is the vector containing the position of the centre of each cell $C_k$, and $\BB_N=[B_1,\ldots,B_N]$ is the vector containing the pointwise controls $B_m$ defined in \eqref{eq:L2_control_continuos_new}. 
\end{proposition}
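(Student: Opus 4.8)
The plan is to substitute the modified integrator \eqref{eq:explicit_scheme} directly into the discrete functional \eqref{eq:J_discr}, reducing the minimization to an elementary scalar problem for each particle. First I would use the last lines of \eqref{eq:explicit_scheme} to express the two quantities that depend on the control as affine functions of the scalar $B_m$, namely $v_{y_m}^{n+1}(\zz) = \bigl(v_{y_m}^*(\zz) + h E_{y_m}^n(\zz)\bigr) - h\,v_{x_m}^*(\zz)\,B_m$ and, after inserting this into the second line, $y_m^{n+1}(\zz) = \bigl(y_m^n(\zz) + h v_{y_m}^*(\zz) + h^2 E_{y_m}^n(\zz)\bigr) - h^2 v_{x_m}^*(\zz)\,B_m$, so the sensitivities are $h^2 v_{x_m}^*$ and $h v_{x_m}^*$ respectively. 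The crucial structural observation is that $B_m$ enters \eqref{eq:J_discr} only through the summand indexed by $m$, and that the control is by design independent of $\zz$. Hence the optimization over $(B_1,\dots,B_N)$ decouples into $N$ independent one-dimensional problems, and in each of them the deterministic scalar $B_m$ factors out of the statistical operator $\mathcal{P}$, which acts only on the $\zz$-dependent state data.

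Next I would exploit convexity. For each $m$ the map $B_m \mapsto \mathcal{J}^{h,N}$ is quadratic with leading coefficient $\tfrac{h}{2}\bigl(\gamma + \mathcal{P}[(\alpha_x+\beta_x)(h^2 v_{x_m}^*)^2 + (\alpha_v+\beta_v)(h v_{x_m}^*)^2]\bigr)$, which is strictly positive whenever $\gamma>0$; the functional is therefore strictly convex and possesses a unique unconstrained minimizer characterized by $\partial_{B_m}\mathcal{J}^{h,N}=0$. Differentiating the four tracking terms by the chain rule with the affine expressions above, and collecting the coefficient of $B_m$ (which commutes with $\mathcal{P}$), yields a single linear equation whose solution is a ratio: the numerator gathers the terms proportional to $v_{x_m}^*$ times the tracking residuals, and the denominator is $\gamma$ plus the $\mathcal{P}$-average of the squared sensitivities. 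This is where the main bookkeeping lies and where I expect the only real friction: applying the scaling \eqref{eq:scaling_new} — which in particular sends $\gamma$ to $\gamma h$ and each of $\alpha_x,\beta_x$ to $\alpha_x/h,\beta_x/h$ (leaving $\alpha_v,\beta_v$ untouched), and converts the collision factor $\theta_m^{\nu,h}=\chi(\eta_m<e^{-\nu h})$ into $\theta_m^{\nu}=\chi(\eta_m<e^{-\nu})$ so that $v^*$ becomes the $v^{\star,n}$ of \eqref{eq:v_collisions_scaled} — one must verify that a common factor of $h$ is produced in both numerator and denominator and cancels, leaving exactly the expressions $\mathcal{R}^{m,n}_\ell$ and $\mathcal{S}^{m,n}_\ell$ of \eqref{eq:terms_in_B_new}. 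Tracking the powers of $h$ carefully, recalling the sensitivities $h^2 v_{x_m}^*$ and $h v_{x_m}^*$, is the delicate step.

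Having obtained the unconstrained minimizer, I would then enforce the admissible set $\mathcal{B}_{adm}=[-M,M]$: since the per-particle objective is strictly convex, its minimizer over the interval is the projection of the unconstrained minimizer onto $[-M,M]$, which produces the operator $\mathbb{P}_{[-M,M]}$ in \eqref{eq:L2_control_new}. Finally, for the continuous-level formula \eqref{eq:L2_control_continuos_new} I would pass to the limit $h\to 0$ in \eqref{eq:L2_control_new}: the sensitivity contributions in the denominator, $\mathcal{S}^{m,n}_x = O(h^2)$ and $\mathcal{S}^{m,n}_v = O(h)$, vanish and leave only $\gamma$, while the $O(h)$ corrections $h v_{y_m}^{\star,n}+h^2 E_{y_m}^n$ and $h E_{y_m}^n$ inside the tracking residuals disappear, so the residuals collapse to the stated $\mathcal{R}_\ell^m(t,\zz)$. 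Throughout, the argument is agnostic to the choice of $\mathcal{P}$: both the expectation \eqref{eq:R_mean} and the worst-case evaluation \eqref{eq:R_max} act linearly on the $\zz$-dependent data with $\zz_0$ fixed by the state at time $t^n$, so the deterministic scalar $B_m$ always commutes with $\mathcal{P}$ and the same closed form results.
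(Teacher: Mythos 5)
Your proposal is correct, and the core computation it organizes --- substituting the affine-in-$B_m$ dynamics \eqref{eq:explicit_scheme} into \eqref{eq:J_discr}, noting the per-particle decoupling, differentiating, applying the scaling \eqref{eq:scaling_new}, and cancelling the common factor of $h$ --- is exactly the bookkeeping the paper performs; your predicted sensitivities ($h^2 v_{x_m}^*$ and $h v_{x_m}^*$), leading quadratic coefficient, and post-scaling ratio all match \eqref{eq:terms_in_B_new}, including the point that $\alpha_\text{v},\beta_\text{v}$ are left unscaled and that $\nu\to\nu/h$ converts $\theta_m^{\nu,h}$ into $\theta_m^\nu$. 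Where you genuinely diverge is the treatment of the constraint $|B_m|\le M$: the paper introduces an augmented Lagrangian $\mathcal{L}(B_m,\lambda_m)=\mathcal{J}^{h,N}(B_m)+\lambda_m(|B_m|-M)$ and runs a KKT-style case analysis (active constraint $\lambda_m\ge 0$ with $B_m=\pm M$, which forces the unconstrained ratio beyond $\pm M$; inactive constraint $\lambda_m=0$, which gives the interior formula), assembling the projection $\mathbb{P}_{[-M,M]}$ from the three cases. You instead observe that each decoupled objective is a strictly convex quadratic in the scalar $B_m$ and invoke the elementary fact that the constrained minimizer of a strictly convex one-dimensional function over an interval is the projection of its unconstrained minimizer. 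Your route is shorter and sidesteps the nondifferentiability of $|B_m|$ at zero that the paper's Lagrangian formally carries through the $\sign(B_m)$ term; the paper's route makes the multiplier explicit, which is the form that would survive if the admissible set were not an interval or the objective not scalar-quadratic. One assumption both treatments share and that deserves to be stated explicitly: for the worst-case operator \eqref{eq:R_max}, factoring the deterministic $B_m$ out of $\mathcal{P}$ (equivalently, commuting $\partial_{B_m}$ with $\mathcal{P}$) is only legitimate if $\zz_0$ is frozen at the current state, i.e., treated as independent of the control over the step --- you say this, and it matches the paper's implicit use of the same property.
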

\begin{proof}
	We introduce the augmented Lagrangian 
	\begin{equation}
		\mathcal{L}(B_m,\lambda_m) = \mathcal{J}^{h,N}(B_m) + \lambda_m (\vert B_m \vert - M), 
	\end{equation}
	with $\lambda_m$ the Lagrangian multiplier. 
	Then, for any $m=1,\ldots,N$, we solve the optimality system 
	\begin{equation}\label{eq:Lagr_system}
		\begin{cases}
			\partial_{B_m}\mathcal{L}(B,\lambda)=0\\
			\left(\partial_{\lambda_m}\mathcal{L}(B,\lambda) = 0\ \text{ and }\lambda_{m}\geq 0\right) \text{ or } \left( \partial_{\lambda_m}\mathcal{L}(B,\lambda) <0 \text{ and } \lambda_{m} = 0\right).
		\end{cases}
	\end{equation} 
	We note that both the operators $\mathcal{P}[\cdot]$ in \eqref{eq:R_mean}-\eqref{eq:R_max} satisfies 
	\begin{equation*}
		\partial_{B_m} \mathcal{P}[\psi(\cdot,\zz)] = \mathcal{P}[	\partial_{B_m}\psi(\cdot,\zz)],
	\end{equation*}
	for any function $\psi(\cdot,\zz)$.
	Hence, from the first equation in \eqref{eq:Lagr_system} we get 
	\begin{equation*}
		\begin{split}
			\partial_{B_m} \mathcal{L} =&  h\mathcal{P}\left[   \alpha_\text{x} \left( y_m^n + h(v_{y_m}^*-hv_{x_m}^* B_m + h E_{y_m}^n )-\hat{y} \right)(-h^2 v_{x_m}^*) +     \right. \\ 
			&   \left. + \beta_\text{x} \left( y_m^n + h(v_{y_m}^* - h v_{x_m}^* B_m +h E_{y_m}^n) - \bar{y}^n  \right) (-h^2 v_{x_m}^*)  + \right.  \\
			& \left. + \alpha_\text{v} \left( v_{y_m}^* -h v_{x_m}^* B_m^n + h E_{y_m}^n\right) (-hv_{x_m}^*) + \right. \\
			& \left. + \beta_\text{v} \left( v_{y_m}^* -h v_{x_m}^* B_m^n + h E_{y_m}^n - \bar{v}_{y}^*\right) (-hv_{x_m}^*) \right] + h\gamma B_m  +\lambda_m\sign(B_m) = 0,
		\end{split}
	\end{equation*} 
	where we omit for simplicity the explicit dependence of the variables on $\zz$, and where the operator $\mathcal{P}(\cdot)$ is defined as in \eqref{eq:R_mean}-\eqref{eq:R_max}. 
	Then, if $\lambda_m \geq 0$, we get from the optimality condition \eqref{eq:Lagr_system} $|B_m|=M$. Consequently considering the two cases $B_m=M$ and $B_m=-M$ separately, under the scaling in \eqref{eq:scaling_new}, we get, setting $B_m=-M$ 
	\begin{equation}\label{eq:proof_1} 
		\begin{split}
			\frac{\lambda_m}{h^2} =&  \mathcal{P}\left[   \alpha_\text{x} \left( y_m^n + h(v_{y_m}^{\star,n}+hv_{x_m}^{\star,n} M + h E_{y_m}^n )-\hat{y} \right)(- v_{x_m}^{\star,n})  +    \right. \\ 
			&   \left. + \beta_\text{x} \left( y_m^n + h(v_{y_m}^{\star,n} + h v_{x_m}^{\star,n} M +h E_{y_m}^n) - \bar{y}^n  \right) (-v_{x_m}^{\star,n}) +  \right.  \\
			& \left. + \alpha_\text{v} \left( v_{y_m}^{\star,n} +h v_{x_m}^{\star,n} M + h E_{y_m}^n\right) (-v_{x_m}^{\star,n}) + \right. \\
			& \left. + \beta_\text{v} \left( v_{y_m}^{\star,n} + h v_{x_m}^{\star,n} M + h E_{y_m}^n - \bar{v}_{y}^{\star,n}\right) (-v_{x_m}^{\star,n}) \right] - \gamma M\geq 0,
		\end{split}
	\end{equation}  
	with $v_{x_m}^{\star,n},v_{y_m}^{\star,n}$ as in \eqref{eq:v_collisions_scaled}.
	From \eqref{eq:proof_1} we recover
	\begin{equation*}
		\frac{\mathcal{P}[\mathcal{R}^{m,n}_{\text{x}}(\zz)  + \mathcal{R}^{m,n}_{\text{v}}(\zz)] }{\gamma +\mathcal{P}[\mathcal{S}^{m,n}_{\text{x}}(\zz) + \mathcal{S}^{m,n}_{\text{v}}(\zz)] }  \leq -M, 
	\end{equation*}
	with $\mathcal{R}^{m,n}_{\ell}(\zz)$, $\mathcal{S}^{m,n}_{\ell}(\zz)$ for $\ell \in \{\text{x},\textrm{v}\}$ as in \eqref{eq:terms_in_B_new}.
	If conversely $\lambda_m \geq 0$ and $B_k=M$, by scaling the parameters as in \eqref{eq:scaling_new}, following a similar argument we have
	\begin{equation}
		\frac{\mathcal{P}[\mathcal{R}^{m,n}_{\text{x}}(\zz)  + \mathcal{R}^{m,n}_{\text{v}}(\zz)] }{\gamma +\mathcal{P}[\mathcal{S}^{m,n}_{\text{x}}(\zz) + \mathcal{S}^{m,n}_{\text{v}}(\zz)] } \geq M.
	\end{equation}
	If finally $\lambda_m=0$, and by assuming the parameters to scale as in \eqref{eq:scaling_new}, from the first equation in \eqref{eq:Lagr_system} we have 
	\begin{equation}\label{eq:control_L2} 
		B_m =    \frac{\mathcal{P}[\mathcal{R}^{m,n}_{\text{x}}(\zz)  + \mathcal{R}^{m,n}_{\text{v}}(\zz)] }{\gamma +\mathcal{P}[\mathcal{S}^{m,n}_{\text{x}}(\zz) + \mathcal{S}^{m,n}_{\text{v}}(\zz)] },
	\end{equation}
	and 
	\begin{equation}
		-M\leq B_m \leq M.
	\end{equation}
	All in all we get $B_m$ defined as in \eqref{eq:L2_control_new}.  Finally, in the limit $h\to 0$ we recover equation \eqref{eq:L2_control_continuos_new}.
\end{proof}

\subsection{Realization of the control via spatial interpolation}

Proposition \ref{prop:istctrl}  provides a feedback control law based on the pointwise evaluation of the magnetic field at the position of each particle. However, such a requirement is not feasible from a technological perspective in realistic applications. To relax this constraint, we introduce an interpolation strategy by defining a set of fictitious spatial cells $C_k$, $k=1,\ldots,N_c$, partitioning the physical domain. Within this framework, the magnetic field acting on a particle located at position $\mathbf{x}_m$ is approximated by the constant value of the field within the cell $C_k$ that contains the particle. While this choice increases the complexity of the control problem, it leads to a more realistic and implementable setting. As mentioned in Section \ref{sec:problem_setting}, we adopt the simplification that the magnetic field lines in the two dimensional setting considered can be approximated by horizontal lines, since the contribution of the poloidal component of the magnetic field is much more smaller than the toroidal one. Following the idea introduced in \cite{lu2025piecewise}, we suppose the grid on which the magnetic field is reconstructed aligns with the magnetic field lines. Thus, for each $k = 1, \ldots, N_c$, we define the piecewise constant control  $\hat{B}_k$ over the cell $C_k$ by interpolating the pointwise control values $B_m$  within the corresponding cell. For convenience, we denote by  
\begin{equation}\label{eq:L2_control_interp}
	\BB = \mathcal{I}^\ell(\BB_N, \xx_c),
\end{equation}
the vector $\BB = [\hat{B}_1, \ldots, \hat{B}_{N_c}]$ collecting the interpolated controls $\hat{B}_k$. Here, $\mathcal{I}^\ell(\cdot)$ denotes a piecewise-constant interpolation operator of order $\ell$, $\xx_c$ is the vector of the positions of the centers of the cells $C_k$, and $\BB_N = [B_1, \ldots, B_N]$ is the vector of pointwise controls $B_m$ as defined in equation~\eqref{eq:L2_control_continuos_new}. 
We observe that the penalization term acting on the control is weighted by the distribution function $f$. Consequently, in regions where $f$ is small, the corresponding magnetic field intensity $B$ could, in principle, take large values without significantly affecting the overall cost. By introducing fictitious cells and enforcing a piecewise-constant control over each of them, the penalization acts uniformly at the cell level, ensuring a globally balanced regularization of the control. More generally, in the absence of spatial discretization, areas with low particle density contribute only marginally to the overall system dynamics. Hence, allowing locally larger control amplitudes in such regions would not substantially modify the global behavior of the solution.
Algorithm \ref{alg:num_scheme} outlines the described procedure.   
\begin{alg}[PIC scheme and control in uncertain setting]~ \label{alg:num_scheme}
	\begin{itemize}
		\item Consider $N_z$ collocation nodes $z_j$ as outline in Algorithm \ref{alg:MC_2}.
		\item  For each node $z_j$, sample the position and velocity of $N$ particles $(x^0_m,v^0_m)$ from the initial density, and compute the quantities in \eqref{eq:terms_in_B_new}. 
		\item  Compute the value of $B_m$ as in \eqref{eq:L2_control_continuos_new}, choosing a suitable statistical operator to account for the uncertainty.
		\item Interpolate the values of the control $B_m$ over the fictitious spatial cells $C_k$ to obtain a piecewise constant control $\hat{B}_k$ as in \eqref{eq:L2_control_interp}.
		\item    \texttt{for} $n=1$ \texttt{to}  $N_t$
		\begin{enumerate}
			\item \texttt{for} $j=1$ \texttt{to} $N_z$
			\begin{itemize}
				\item Associate to each particle in cell $C_k$ the value of the control $\hat{B}_k$.  
				\item Update the particle position and velocities according to the semi-implicit PIC scheme \eqref{eq:filbet_scheme}, using the control previously computed.
				\item Recompute the quantities in \eqref{eq:terms_in_B_new}. 
			\end{itemize}
			\texttt{end for}
			\item Compute the value of $B_m$ as in \eqref{eq:L2_control_continuos_new}, choosing a suitable statistical operator to account for the uncertainty.
			\item Interpolate the values of the control $B_m$ over the fictitious spatial cells $C_k$ to obtain a piecewise constant control $\hat{B}_k$ as in \eqref{eq:L2_control_interp}.
			\item Reconstruct the quantities of interest as highlighted in Algorithm \ref{alg:MC_2}. 
		\end{enumerate}
		\texttt{end for}
	\end{itemize}
\end{alg}

We emphasize that the control expression introduced in equation~\eqref{eq:L2_control_interp} is, in general, sub-optimal with respect to the reduced-horizon control problem \eqref{eq:J_discretized_h}--\eqref{eq:explicit_scheme} in the limit $h \to 0$, as it relies on an additional interpolation of the dynamics. Nevertheless, since the main objective is to confine the entire ensemble of particles within the physical domain and to assess the robustness of the control strategy, we employ the control defined in \eqref{eq:L2_control_interp} within the semi-implicit dynamics \eqref{eq:discr_collision_step}--\eqref{eq:filbet_scheme}, demonstrating its effectiveness in steering the plasma toward a desired configuration.

A comparison with an alternative control strategy, originally proposed in \cite{albi2024instantaneous} and extended here to the uncertain setting, is provided in \ref{app:old_control}. This highlights the improvements achieved by the feedback-based approach presented in this section.

\begin{remark}
	We remark that the control $B_m$
	in equation \eqref{eq:L2_control_new} is of feedback type, since it depends on the particle positions and velocities at time $t_n$. From a numerical point of view, these quantities are readily available at each iteration of the scheme and are used to compute the control online, as outlined in Algorithm \ref{alg:num_scheme}. Physically, this would correspond to accessing particle positions and velocities at each time step, which may not be directly feasible in practice. In general, however, the control could be reconstructed from sampled particle positions and velocities, so that knowledge of the full microscopic state of the system would not be required.	
	To further bridge the gap between the idealized numerical setting and more realistic scenarios, we also include a test in which the control field is computed and then held fixed for a prescribed number of time steps before being updated again. This strategy mimics the effect of limited measurement or actuation frequency and demonstrates that the feedback approach remains effective under such practical constraints.
\end{remark}
\begin{remark}
	Note that by increasing the number of fictitious spatial cells $C_k$, it is possible to obtain a continuous representation of the control value, reconstructed on the physical spatial grid.
\end{remark}
\section{Numerical experiments}\label{sec:num_esp} 
In this section, we present several numerical experiments aimed at demonstrating the effectiveness of the instantaneous control strategy, as described in the previous sections, in steering the plasma toward desired configurations.  
In particular, we focus on a variant of the classical Sod shock tube test and a modified version of the Kelvin--Helmholtz instability, both enhanced by the inclusion of collisions, electromagnetic effects, and uncertainty. We compare the system's behavior with and without control to highlight the impact of the proposed strategy.  
Regarding uncertainty, we assume randomly distributed initial data following a uniform distribution over the interval $[0,1]$, and we sample $ N_z $ Gaussian quadrature nodes accordingly.

\subsection{2D Sod shock tube test}\label{sec:2D_sod_shock} 
We focus on a two-dimensional Sod shock tube test, where we denote the position and velocity variables by $ \xx = (x, y) $ and $ \vv = (v_x, v_y) $, respectively. We consider the spatial domain $ (x, y) \in [0, 1.5]^2 $, imposing reflective boundary conditions in the $ y $-direction and periodic boundary conditions in the $ x $-direction. The initial distribution is given by
\begin{equation}\label{eq:sod_2D_initial_distr}
	f_0(\xx,\vv,z) =\frac{\rho_0(\xx)}{2\pi T_0(\xx,z)}\exp\left(-\frac{ \vert \vv\vert ^2}{2T_0(\xx,z)}\right),
\end{equation}
with the initial density and temperature defined as
\begin{equation}\label{eq:rho0_T0_2D}
	\begin{split}
		\rho_0(\xx) =& 
		\begin{cases}
			0.125, & \text{if } y \in [0, 0.5) \cup (1, 1.5], \\
			1,     & \text{if } y \in [0.5, 1], \\
		\end{cases} \\[5pt]
		T_0(\xx,z) = &
		\begin{cases}
			0.1 + 0.25z, & \text{if } y \in [0, 0.5) \cup (1, 1.5], \\
			1 + 0.25z,   & \text{if } y \in [0.5, 1]. \\
		\end{cases}
	\end{split}
\end{equation}
The simulations are performed using $ N = 10^7 $ particles and a grid of $ m_x \times m_y $ cells, with $ m_x = m_y = 128 $, for the reconstruction of macroscopic quantities. The final time is set to $ t_f = 2 $, with a time step $ h = 0.05 $.
At each time step, for every node $z_k$, the empirical density \eqref{eq:approx_density} is reconstructed using a piecewise constant representation. The quantities of interests, such as the mean or the variance, are then computed following the procedure outlined in Algorithm \ref{alg:MC_2}.
\subsubsection{Accuracy of the stochastic collocation method}\label{app:gauss_legendre} 
First we validate the accuracy of the stochastic Gauss-Legendre formula when applied to the solution of the collisional Vlasov--Poisson equation with uncertainty, in particular in the context of a control strategy based on a worst-case scenario. We denote by $ E_{N_z^{\text{ref}}}[\rho](T,\xx) $ the reference solution obtained with $ N_z^{\text{ref}} = 256 $ Gauss--Legendre nodes, and by $ E_{N_z}[\rho](T,\xx) $ the corresponding approximation obtained with a lower number of nodes.  
The error at time $ T = 0.2 $, as a function of $ N_z $, is defined as
\begin{equation}\label{eq:error}
	err_{N_z}(T) =  \left\Vert  E_{N_z^{\text{ref}}}[\rho](T,\xx) - E_{N_z}[\rho](T,\xx) \right\Vert_\infty.
\end{equation}
Figure~\ref{fig:accuracy} displays the error defined in~\eqref{eq:error} for two collisional regimes: $ \nu = 0 $ (left) and $ \nu = 1000 $ (right).  
We test the controlled case, where the magnetic field is defined as in~\eqref{eq:L2_control_interp}, using the operator $ \mathcal{P}(\cdot) $ defined either as in~\eqref{eq:R_max} (referred to as $ B_{\text{max}} $) or as in~\eqref{eq:R_mean} (referred to as $ B_{\text{mean}} $).
The results confirm that the method exhibits spectral accuracy with respect to the number of quadrature nodes $ N_z $.

\begin{figure}[h!]
	\centering
	\includegraphics[width=0.35\linewidth]{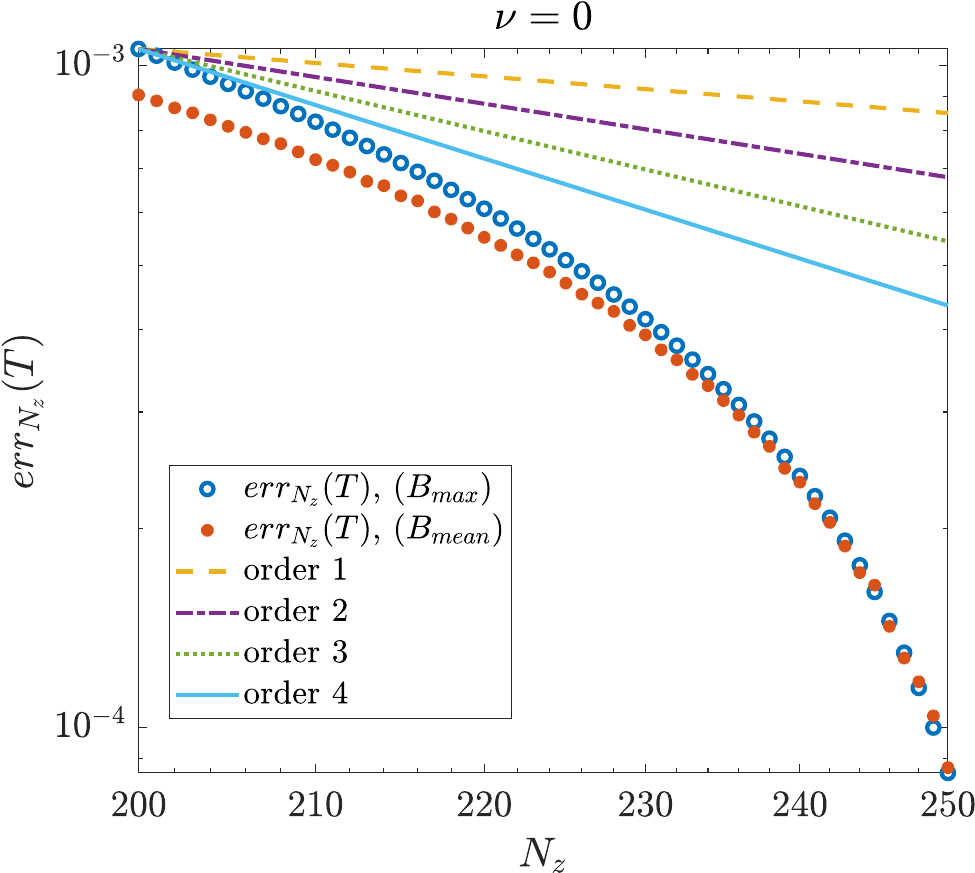}
	\includegraphics[width=0.35\linewidth]{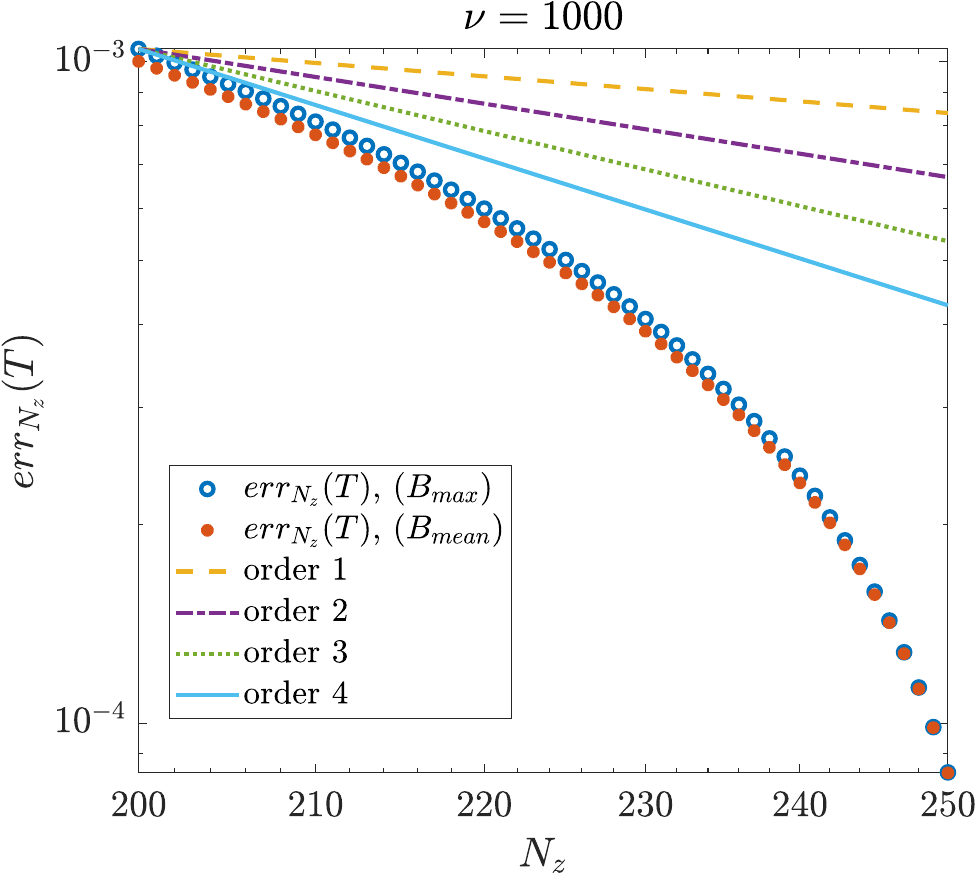}  
	\caption{Two-dimensional Sod shock tube test with control. Norm of the error for $\nu = 0$ (on the left), $\nu = 1000$ (on the right). }
	\label{fig:accuracy}
\end{figure} 

\subsubsection{Effectiveness of the control strategy}

To assess the effectiveness of the control strategy, we compute the thermal energy at the boundaries for each value $ z_i $, with $ i = 1, \ldots, N_z $, as
\begin{equation}\label{eq:energy}
	\mathcal{E}_b^n(z_i) = \frac{1}{2 N N_b} \sum_{\mathcal{C}_j \in \Omega_b} \sum_{m=1}^N \left\vert \vv_m^n(z_i) - U_b^n(z_i) \right\vert^2 \chi(\xx_m^n \in \mathcal{C}_j),
\end{equation}
\begin{figure}[h!]
	\centering
	\includegraphics[width=0.31\linewidth]{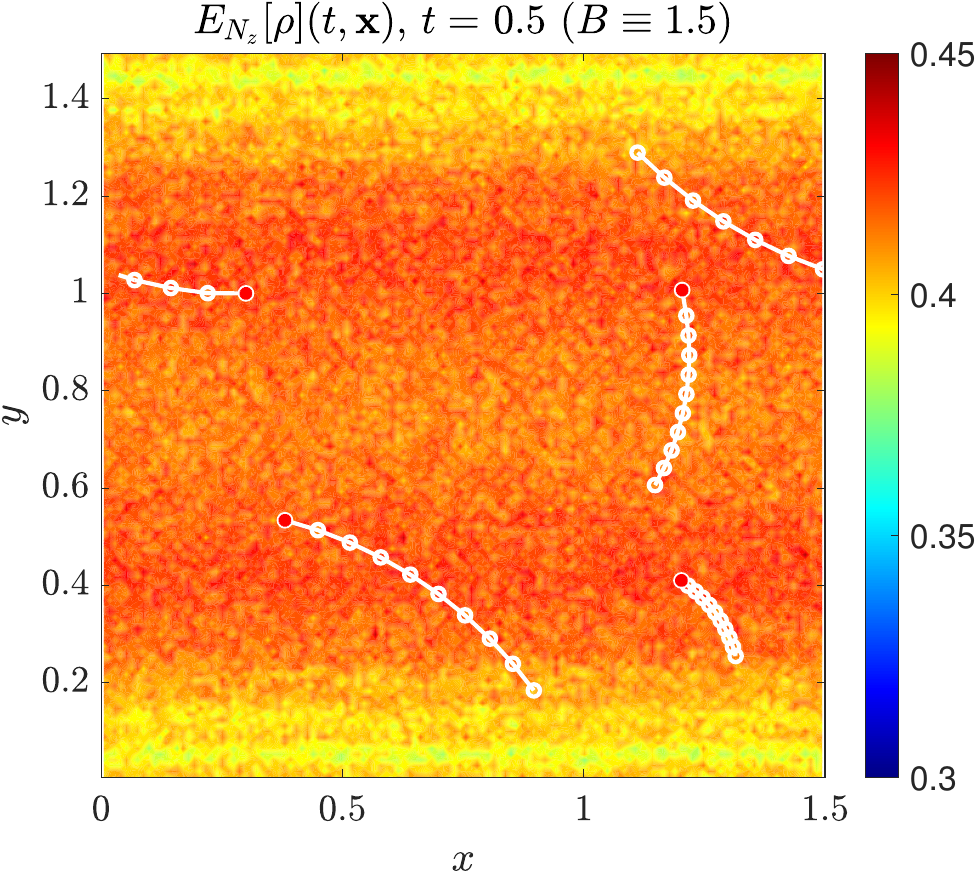}
	\includegraphics[width=0.31\linewidth]{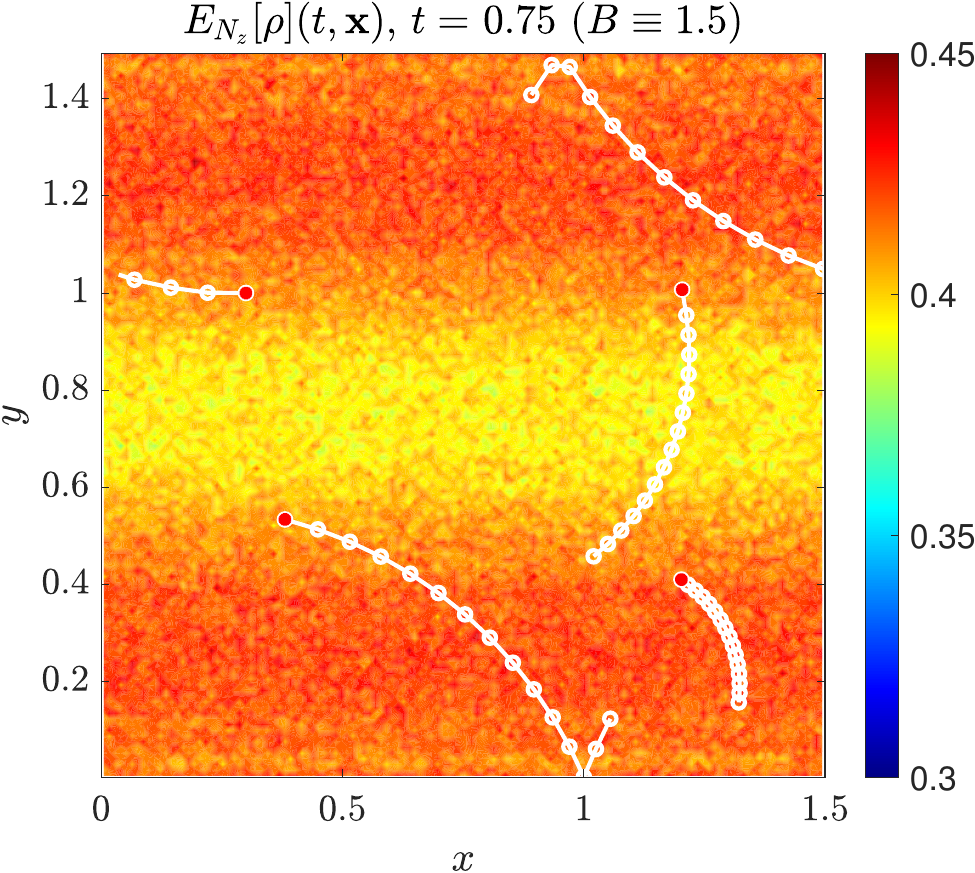}
	\includegraphics[width=0.31\linewidth]{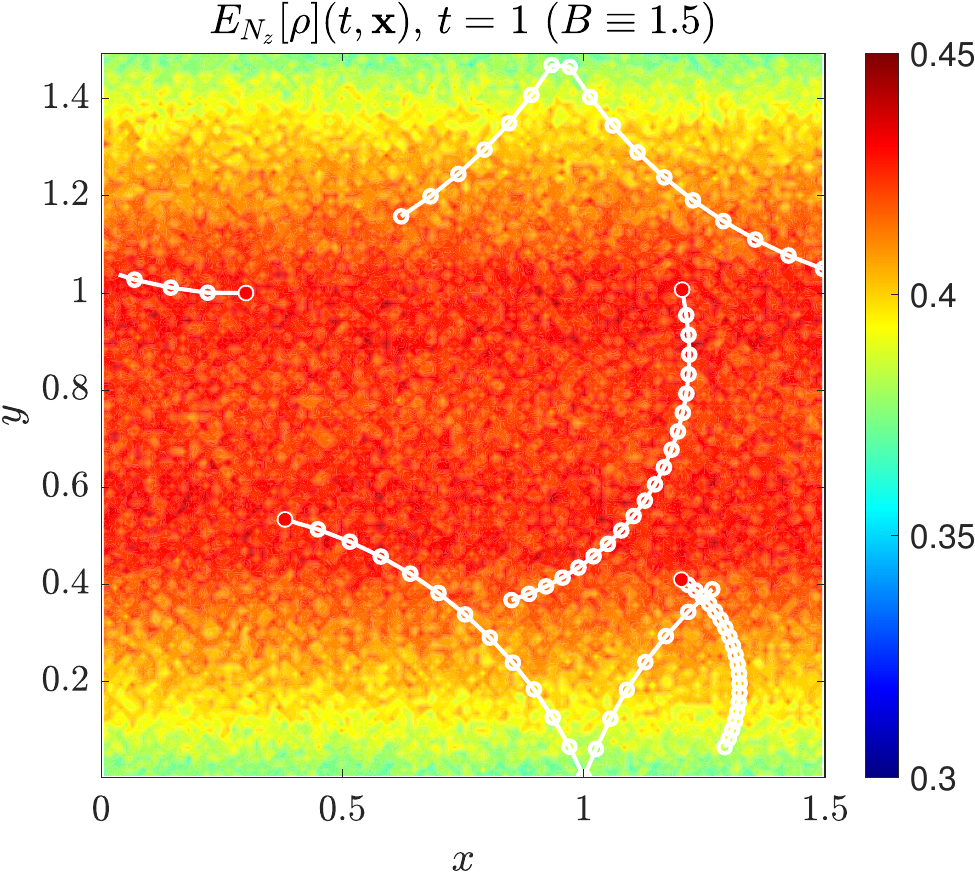}\\
	\includegraphics[width=0.3\linewidth]{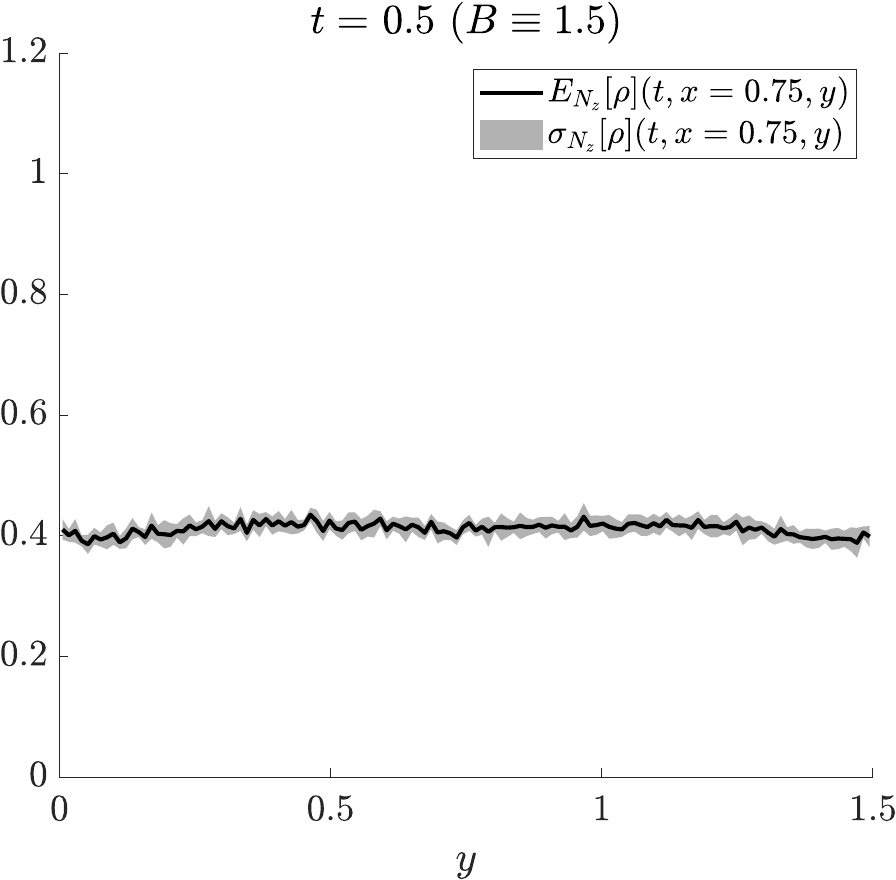}
	\includegraphics[width=0.3\linewidth]{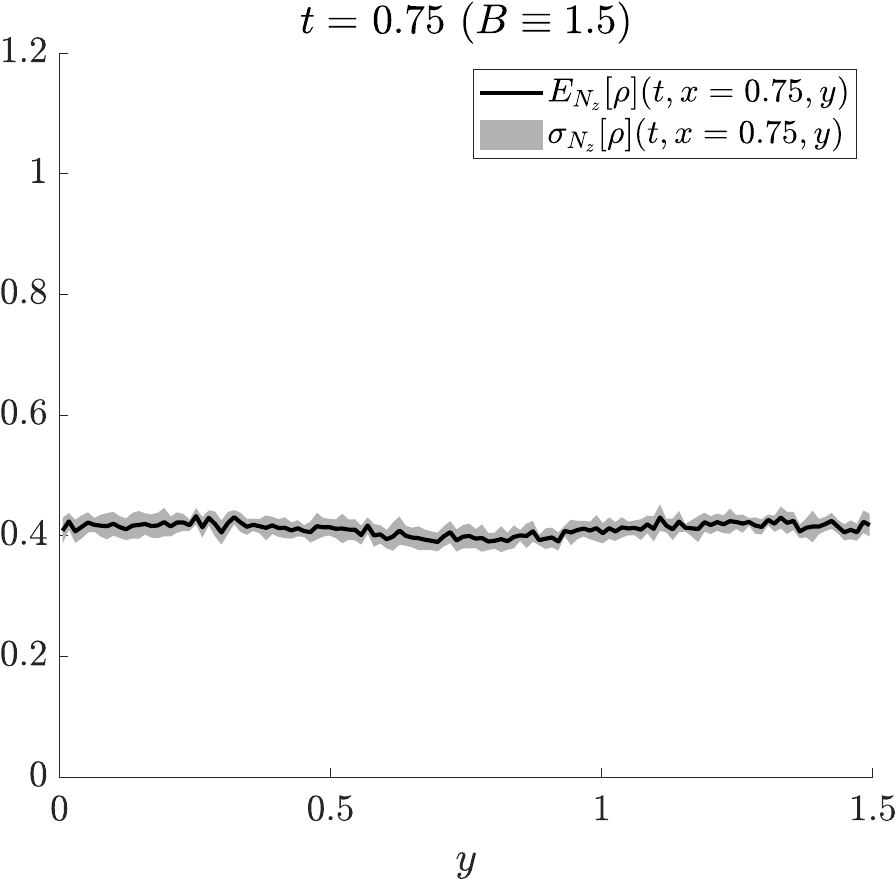}
	\includegraphics[width=0.3\linewidth]{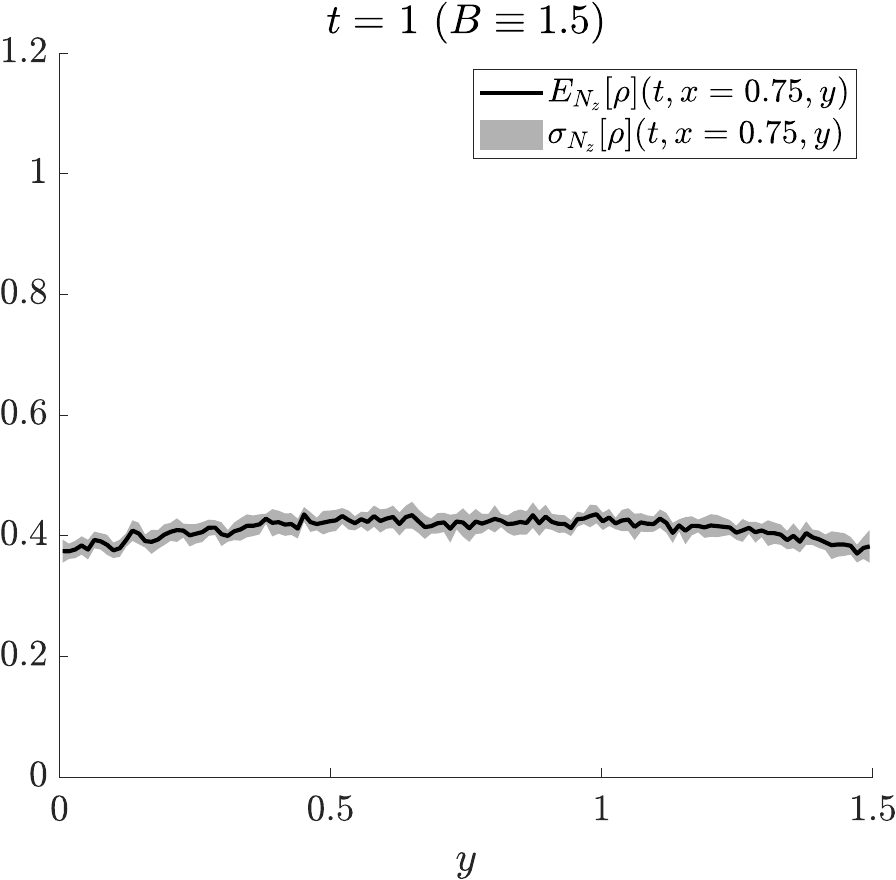}
	\caption{Two-dimensional Sod shock tube test with $ B(t,\xx) = 1.5 $ and $ \nu = 0 $.  Top row: snapshots of the mean density at different time instants. Bottom row: slices of the mean density at $ x = 0.75 $, with the corresponding standard deviation shown as a shaded area.
	}
	\label{fig:sod2D_B15_nu0}
\end{figure}
\begin{figure}[h!]
	\centering
	\includegraphics[width=0.31\linewidth]{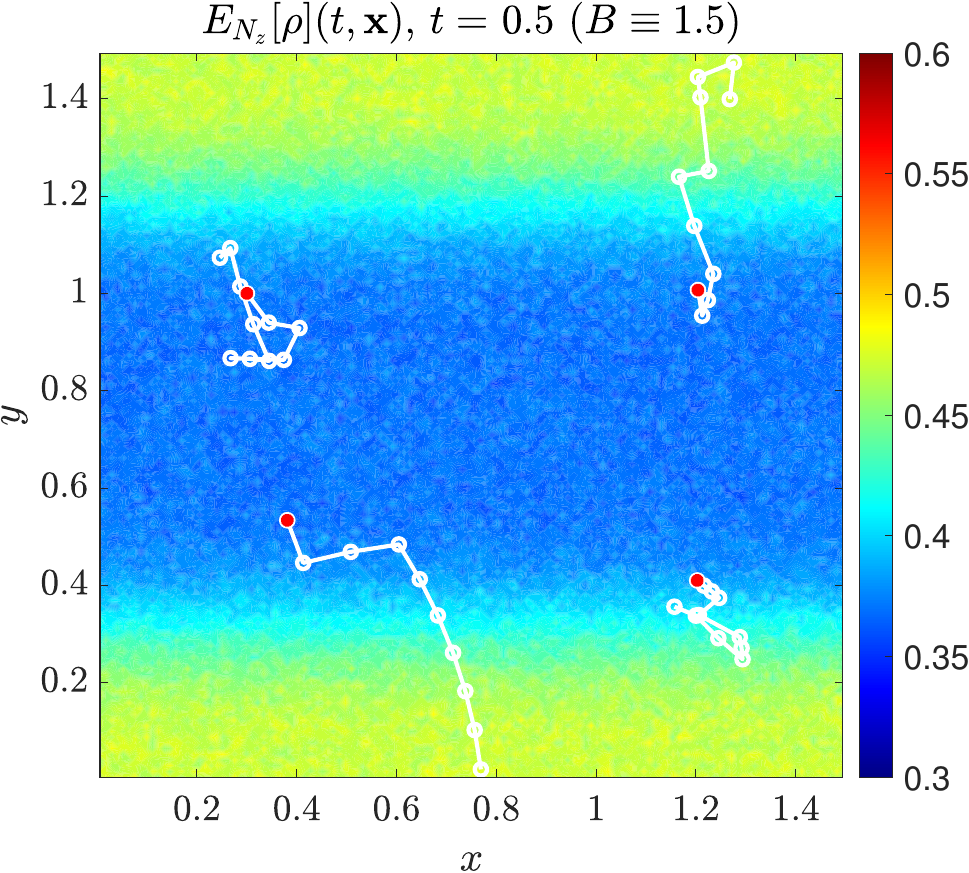}
	\includegraphics[width=0.31\linewidth]{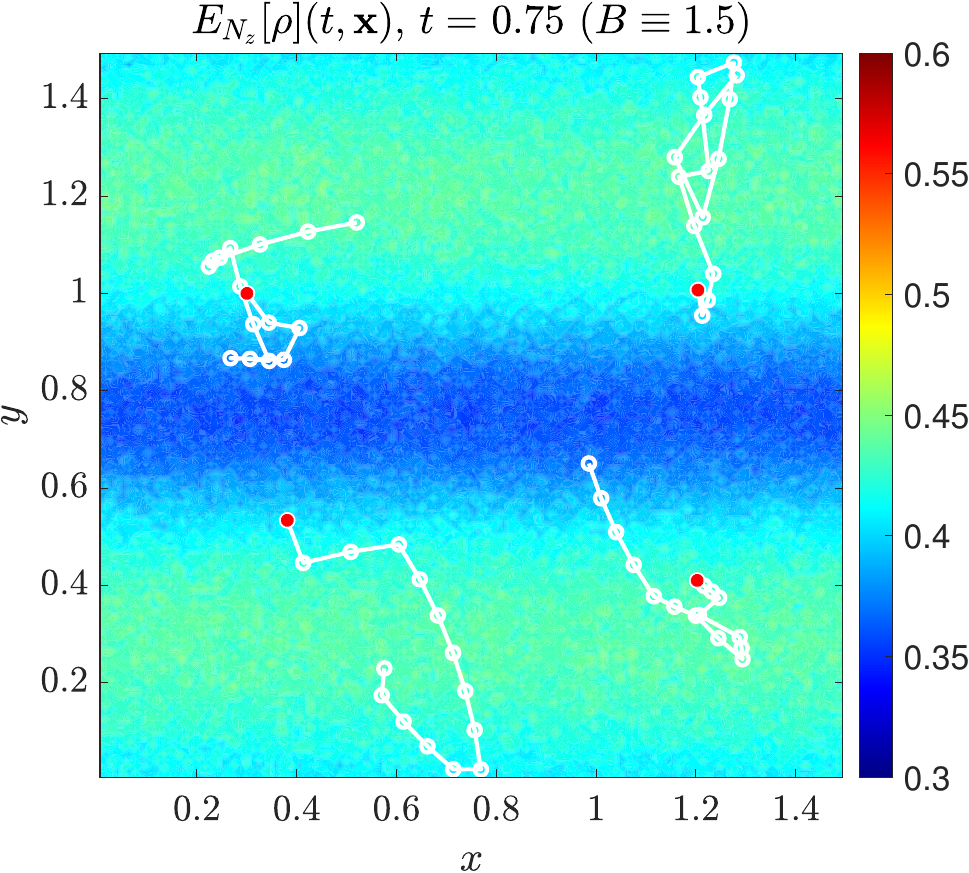}
	\includegraphics[width=0.31\linewidth]{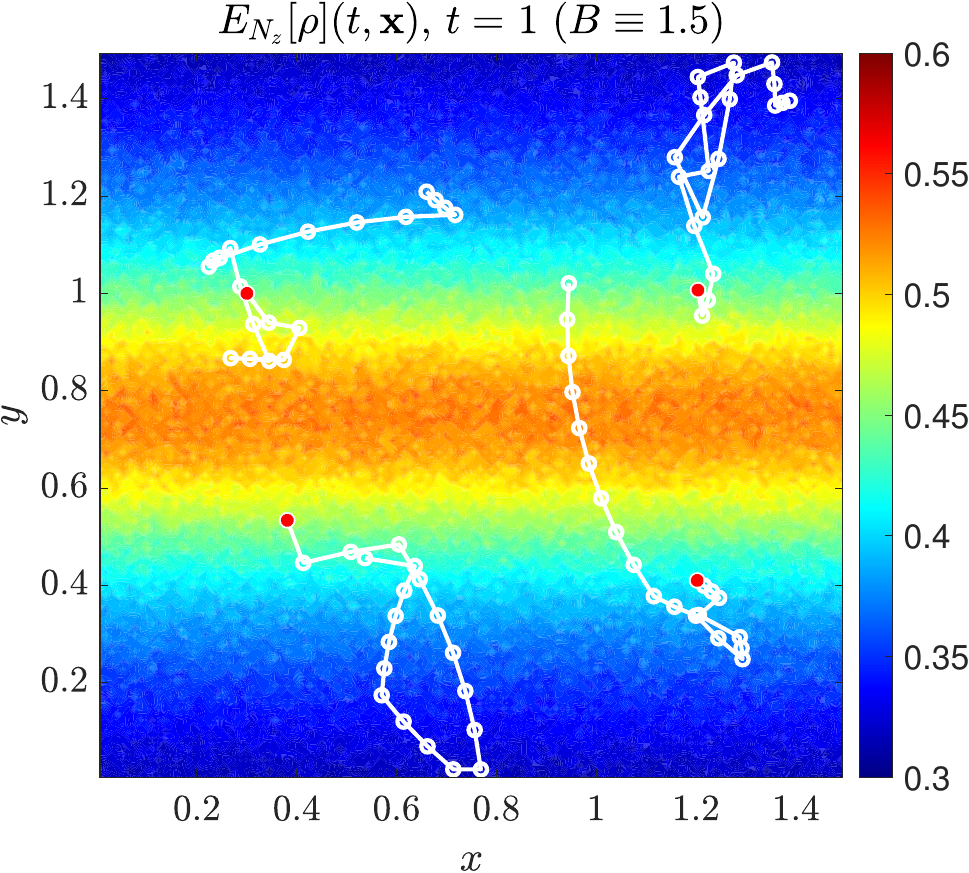}\\
	\includegraphics[width=0.3\linewidth]{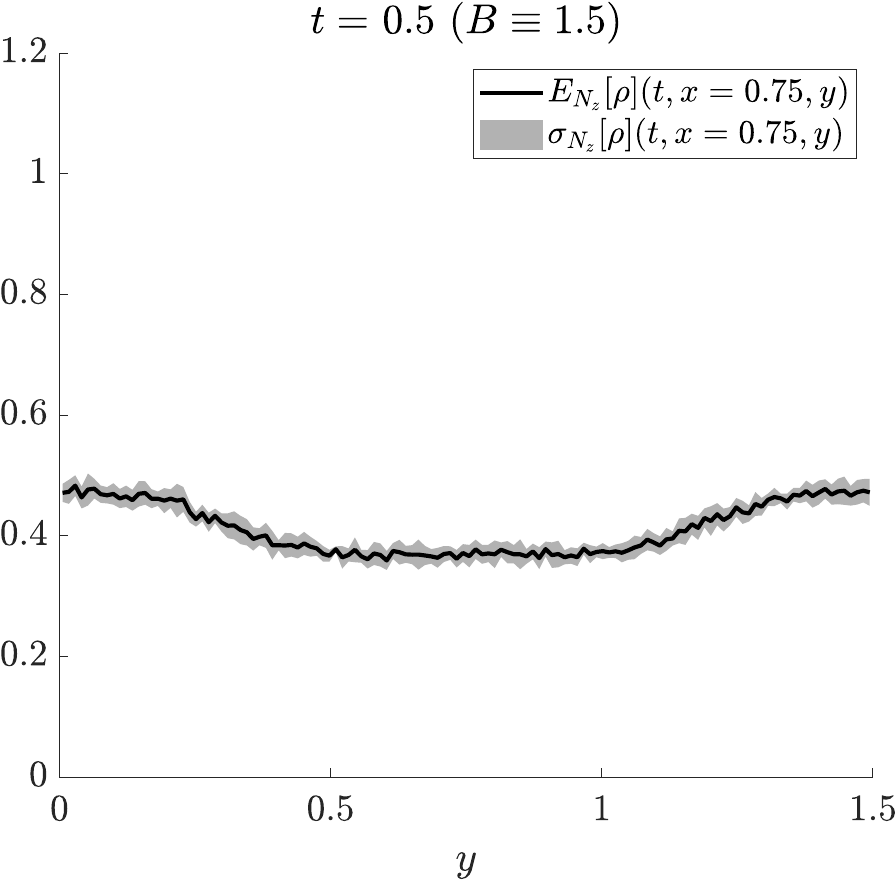}
	\includegraphics[width=0.3\linewidth]{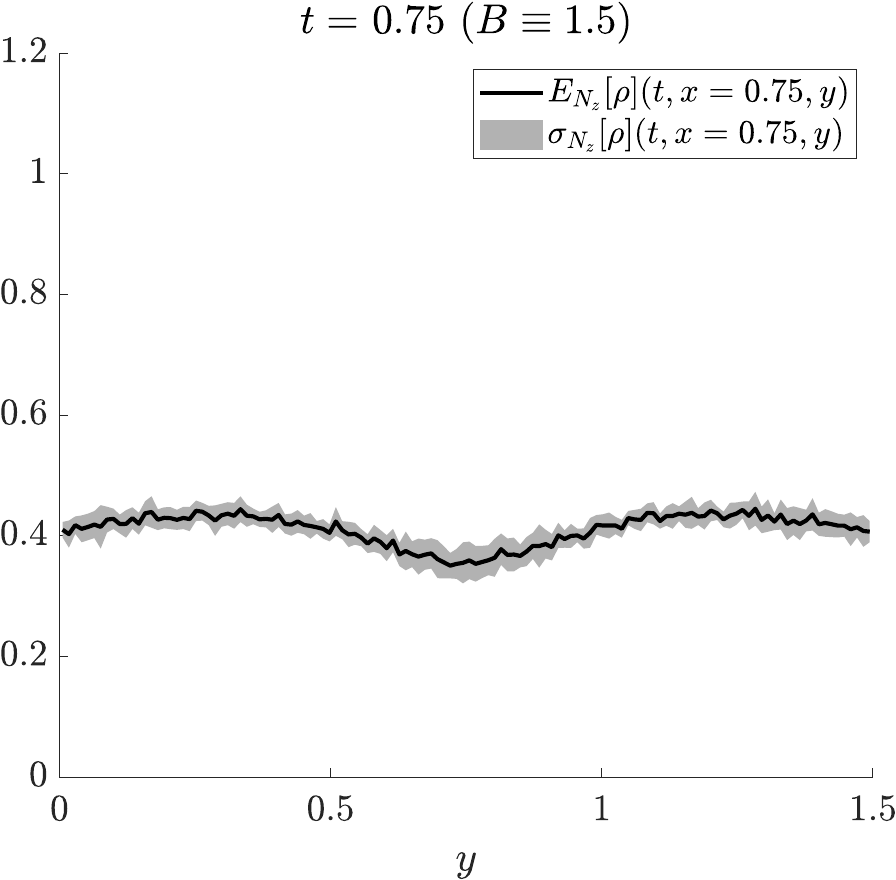}
	\includegraphics[width=0.3\linewidth]{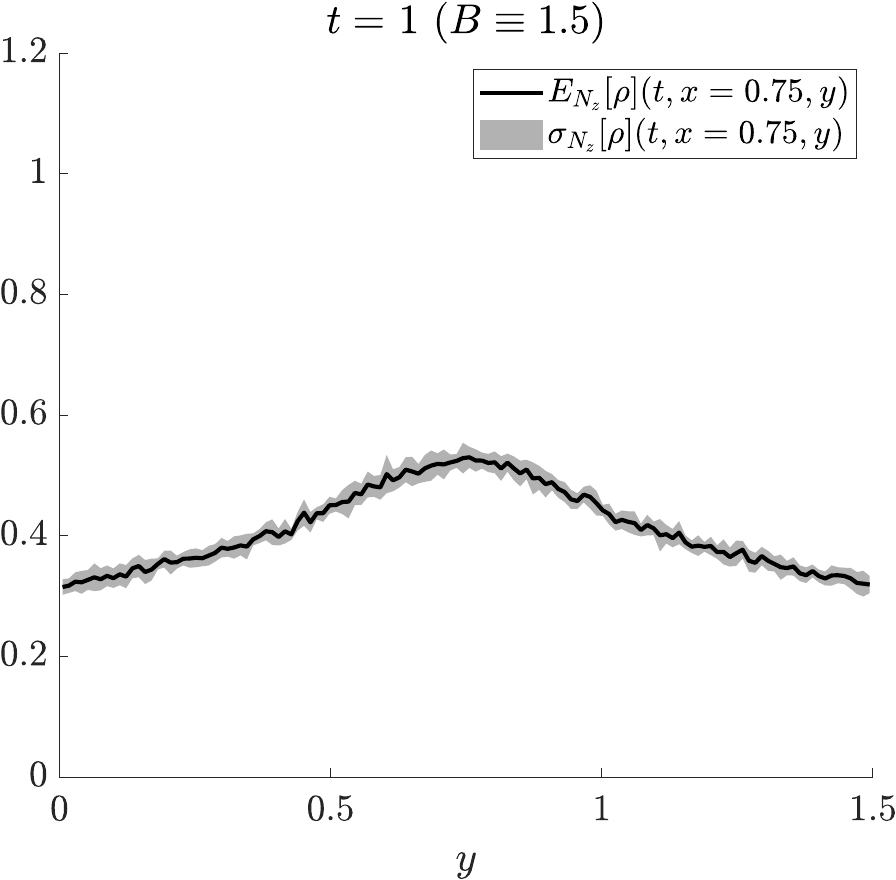}
	\caption{Two-dimensional Sod shock tube test with $ B(t,\xx) = 1.5 $ and $ \nu = 10 $.  Top row: snapshots of the mean density at different time instants. Bottom row: slices of the mean density at $ x = 0.75 $, with the corresponding standard deviation shown as a shaded area.}
	\label{fig:sod2D_B15_nu10}
\end{figure}
\begin{figure}[h!]
	\centering
	\includegraphics[width=0.31\linewidth]{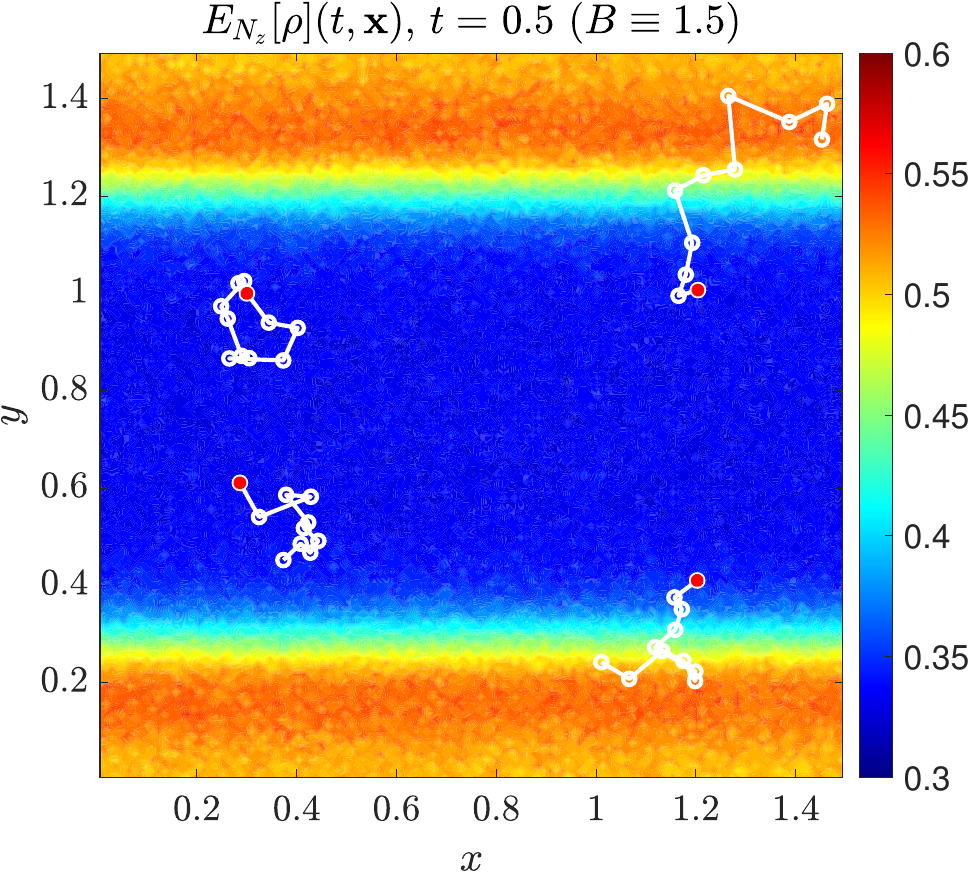}
	\includegraphics[width=0.31\linewidth]{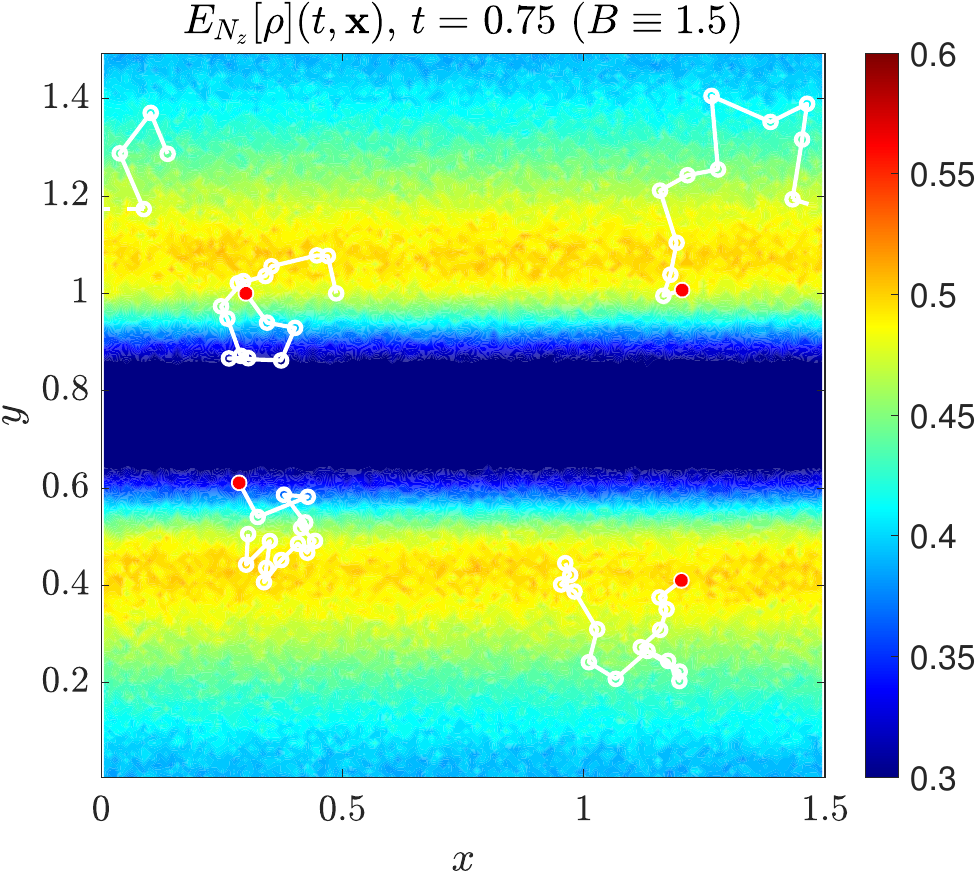}
	\includegraphics[width=0.31\linewidth]{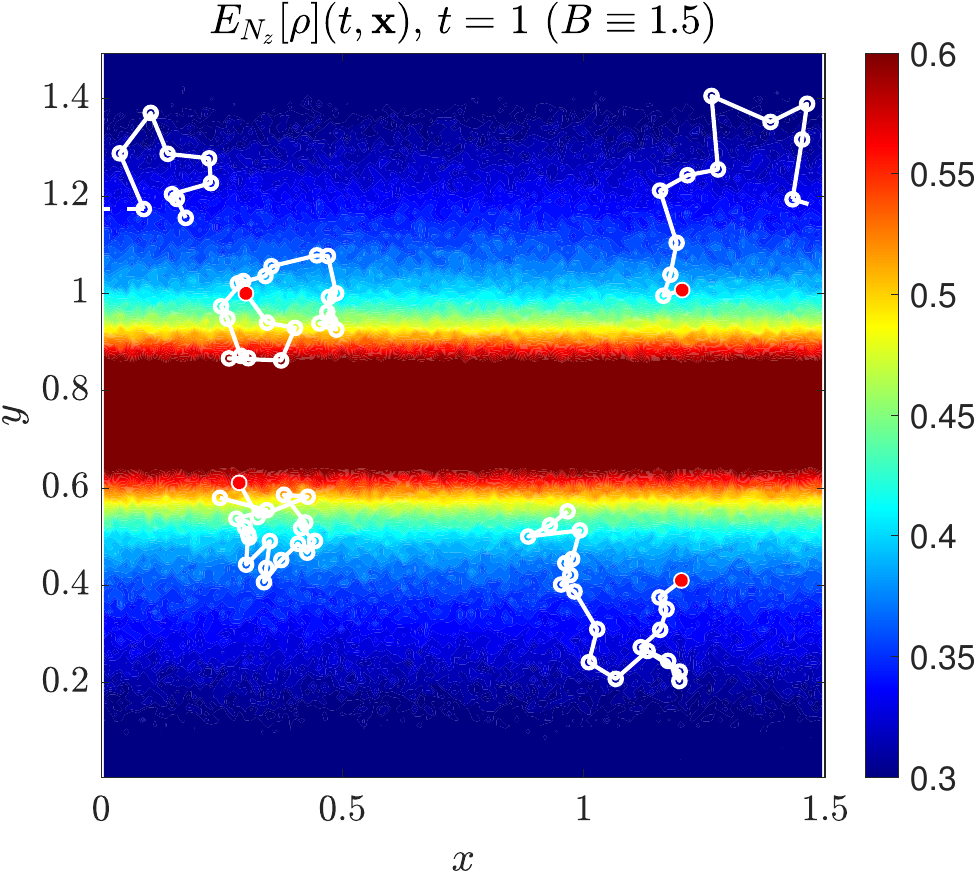}\\
	\includegraphics[width=0.3\linewidth]{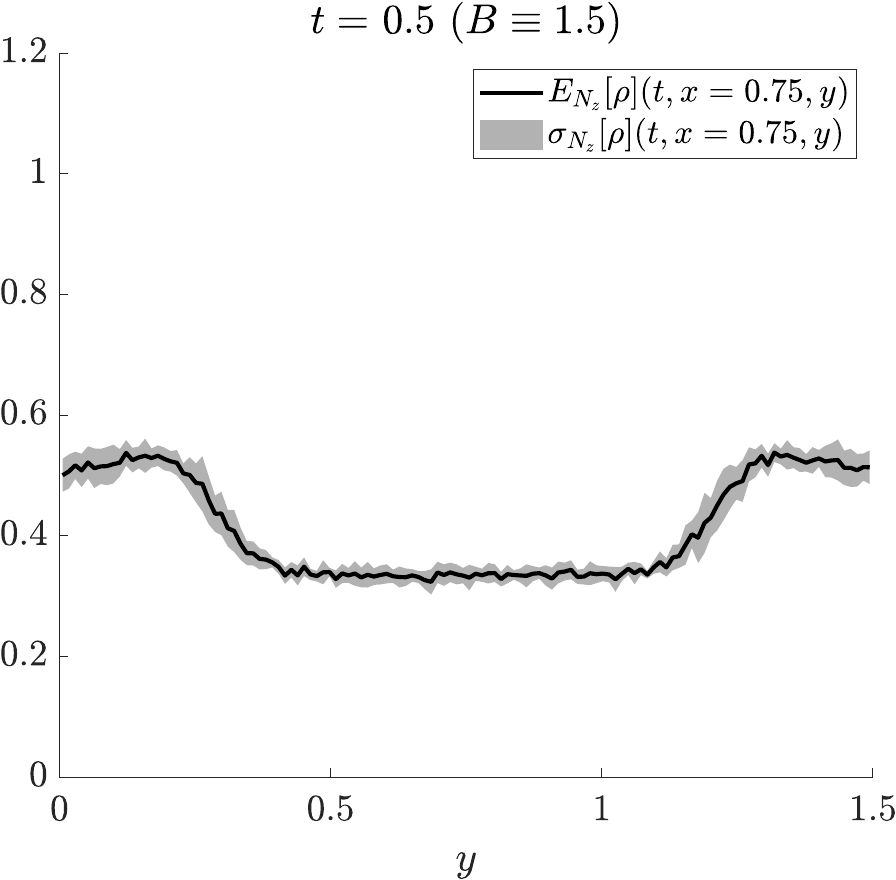}
	\includegraphics[width=0.3\linewidth]{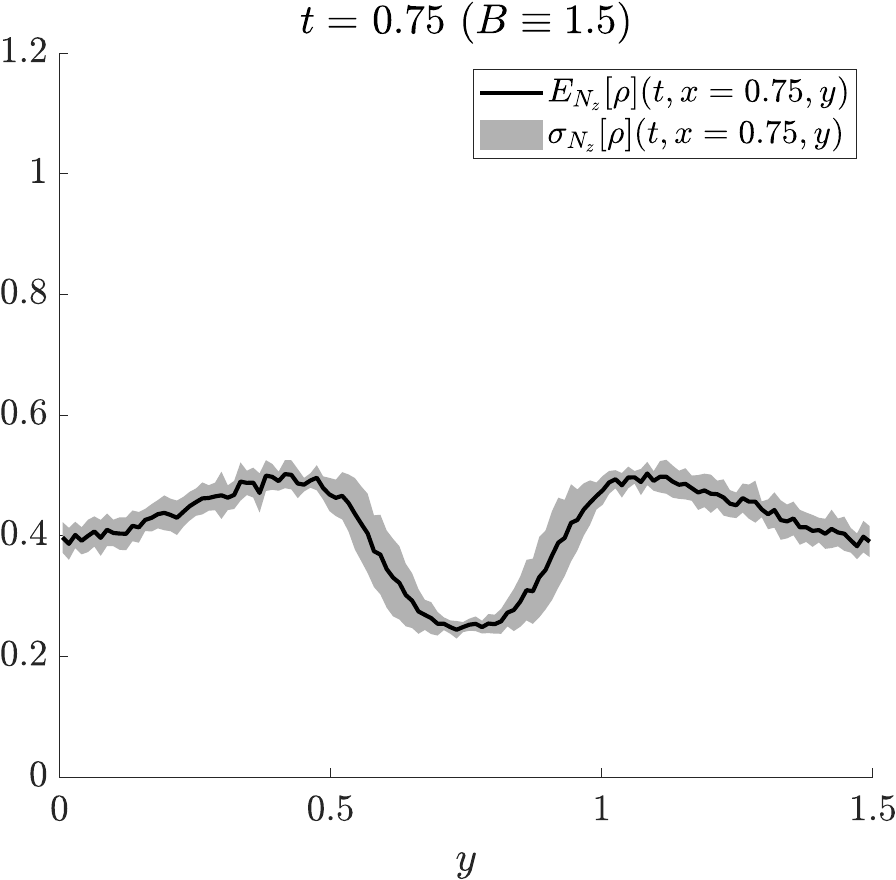}
	\includegraphics[width=0.3\linewidth]{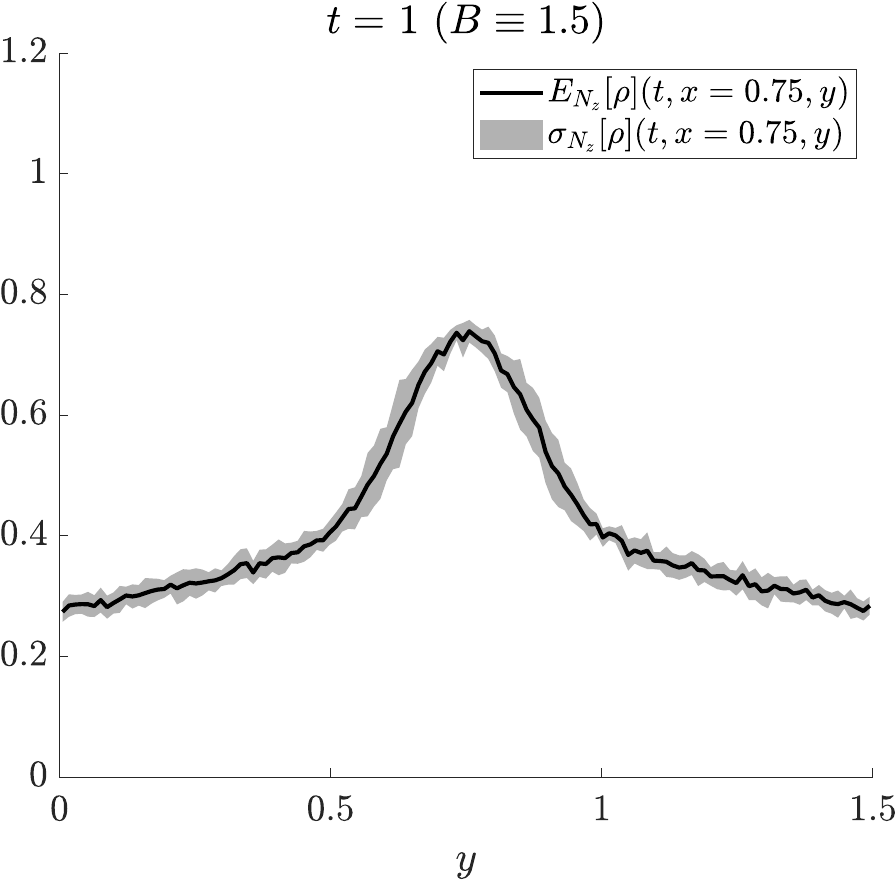}
	\caption{Two-dimensional Sod shock tube test with $ B(t,\xx) = 1.5 $ and $ \nu =1000 $.  Top row: snapshots of the mean density at different time instants. Bottom row: slices of the mean density at $ x = 0.75 $, with the corresponding standard deviation shown as a shaded area.}
	\label{fig:sod2D_B15_nu1000}
\end{figure}
where
\begin{equation}\label{eq:mass_boundary}
	\begin{split}
		U_b^n(z_i) &= \frac{1}{N N_b \rho_b^n(z_i)} \sum_{\mathcal{C}_j \in \Omega_b} \sum_{m=1}^N \vv_m^n(z_i) \chi(\xx_m^n \in \mathcal{C}_j), \\
		\rho_b^n(z_i) &= \frac{1}{N N_b} \sum_{\mathcal{C}_j \in \Omega_b} \sum_{m=1}^N \chi(\xx_m^n \in \mathcal{C}_j),
	\end{split}
\end{equation}
and $ N_b $ denotes the number of cells $ \mathcal{C}_j \in \Omega_b $, with $ \Omega_b = [0, \Delta_y] \cup [1.5-\Delta_y, 1.5] $. This choice corresponds to a region near the $ y $-boundaries with width equal to one cell size $ \Delta_y = 0.234 $.In the uncontrolled case, enlarging the boundary region $\Omega_b$ is expected to increase the thermal energy at the boundaries, since particles entering this region contribute to local velocity fluctuations, thereby enhancing the effective kinetic energy and temperature. In contrast, under control we expect particles to remain confined near the center of the domain, which keeps the thermal energy at the boundaries low.

We begin by considering the uncontrolled case, setting $ B(t, \xx) = 1.5 $.  
Figures~\ref{fig:sod2D_B15_nu0}--\ref{fig:sod2D_B15_nu10}--\ref{fig:sod2D_B15_nu1000} illustrate the system dynamics under three different collisional regimes: $ \nu = 0 $, $ \nu = 10 $, and $ \nu = 1000 $.  
In each figure, the first row displays snapshots of the mean density at times $ t = 0.5 $, $ t = 0.75 $, and $ t = 1 $. Superimposed in white are the mean trajectories of four randomly selected particles up to time $ t $, with their initial mean positions highlighted in red.
The second row shows slices of the mean density function at $ x = 0.75 $, taken at the same time instants, with the associated standard deviation represented as a shaded area.
Initially, particles move toward the upper and lower boundaries of the domain, where they are reflected due to the imposed boundary conditions. As the collisional frequency $ \nu $ increases, the diffusion of particles across the domain is progressively reduced. In the collisionless case ($ \nu = 0 $), particles exhibit strong diffusive behavior. In contrast, in the highly collisional regime ($ \nu = 1000 $), particles do not diffuse but instead oscillate around the center of the domain. The intermediate case ($ \nu = 10 $) corresponds to a quasi-collisional regime, where both diffusion and collisional effects are simultaneously present.
Figure~\ref{fig:sod2D_noControl_energy} shows the mean thermal energy at the boundaries along with its standard deviation for $ \nu = 0 $ (left), $ \nu = 10 $ (center), and $ \nu = 1000 $ (right).  
The thermal energy increases whenever particles collide with the boundaries.  
In the collisionless regime, the spatial diffusion of particles results in only minor variations in thermal energy.  
In contrast, in the quasi-collisional and fully collisional regimes, the thermal energy exhibits an oscillatory pattern, reflecting the behavior of the particle trajectories.
\begin{figure}[h!]
	\centering
	\includegraphics[width=0.3\linewidth]{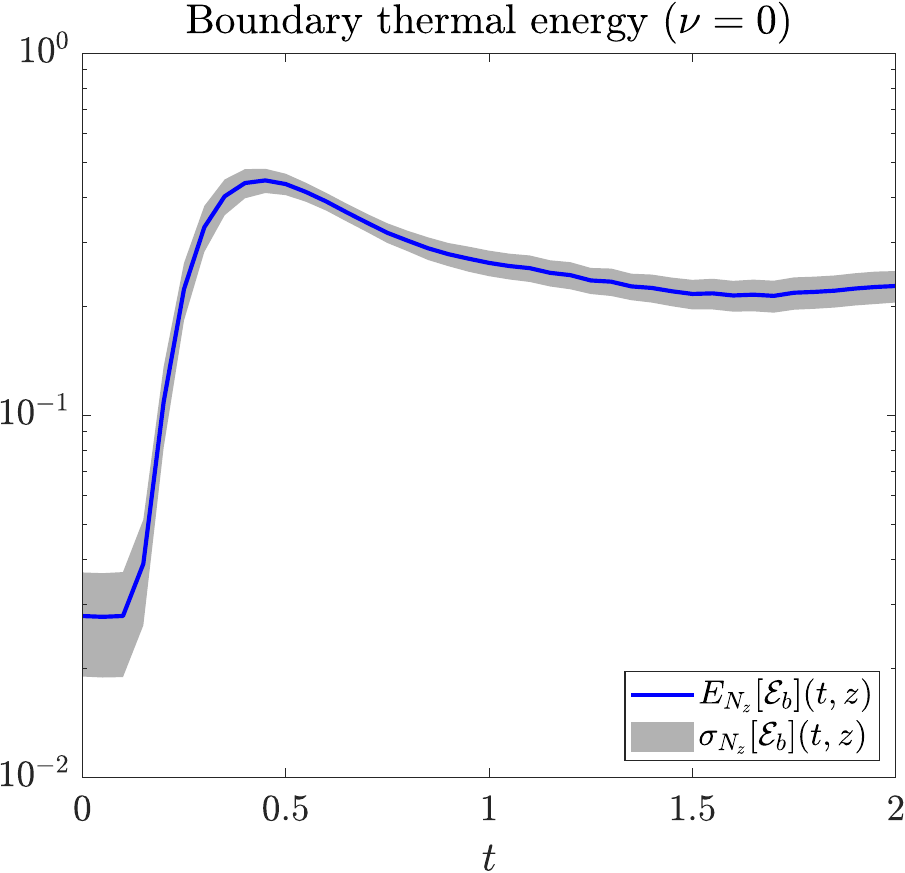}
	\includegraphics[width=0.3\linewidth]{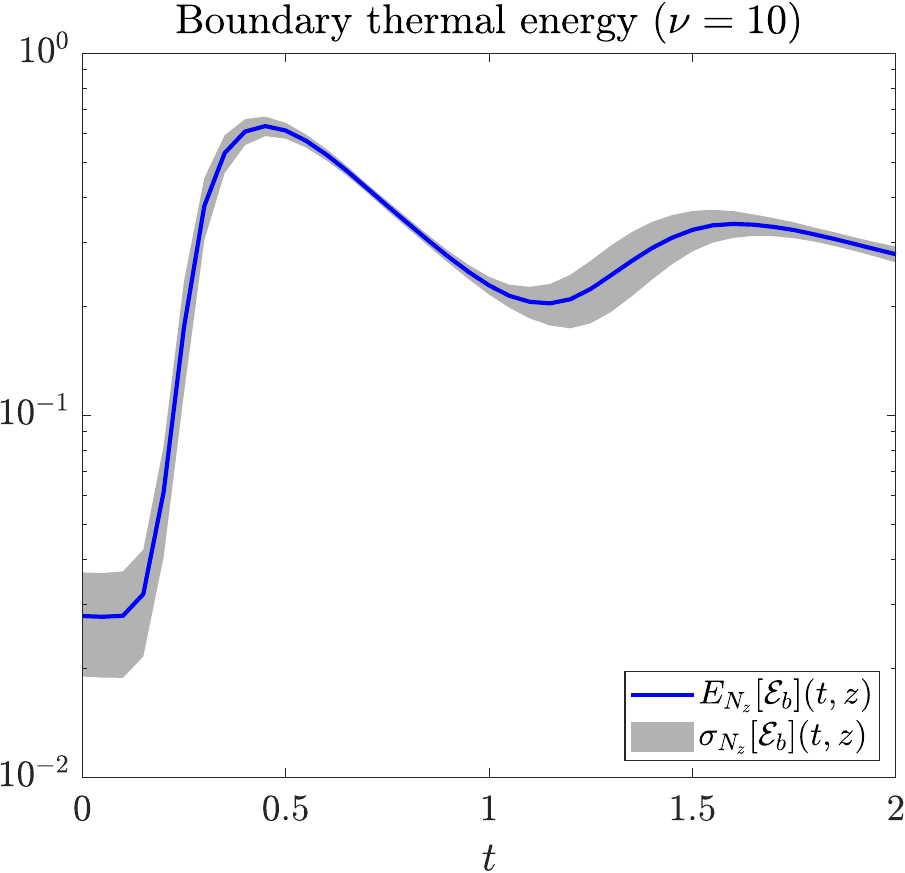}
	\includegraphics[width=0.3\linewidth]{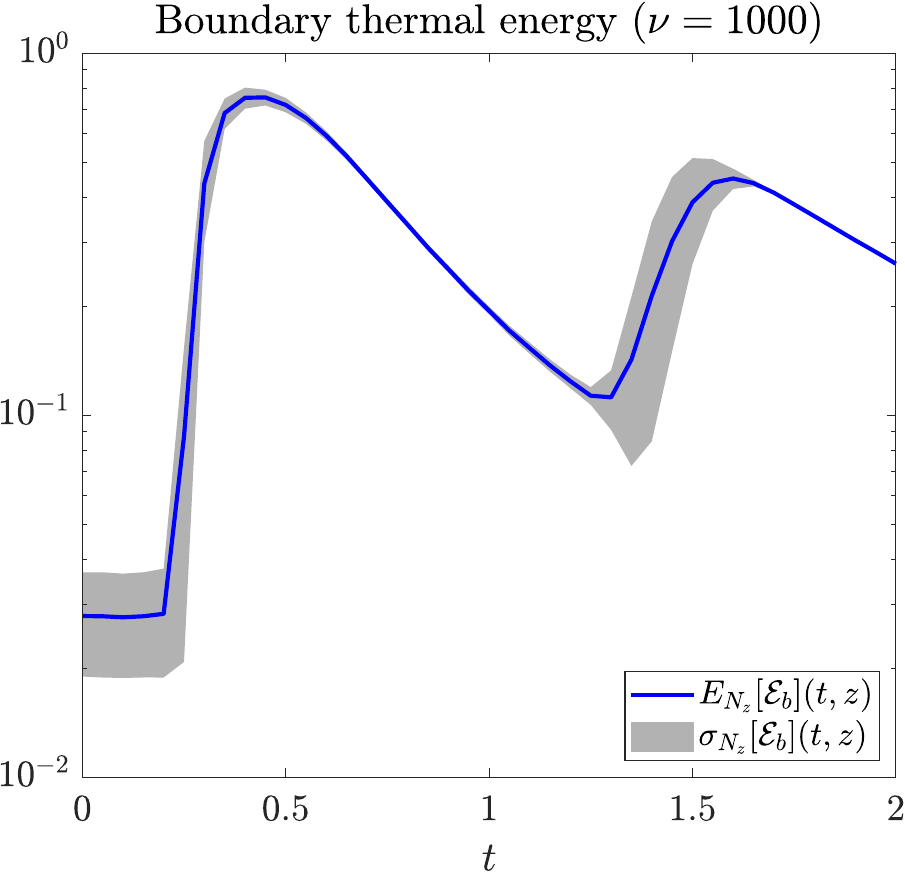}
	\caption{Two-dimensional sod shock tube test without control. Thermal energy at the boundaries for $\nu = 0$ (on the left), $\nu = 10$ (in the centre) and $\nu = 1000$ (on the right). The mean value is depicted in blue, while the standard deviation as a shaded area.}
	\label{fig:sod2D_noControl_energy}
\end{figure}
We now consider the case in which the magnetic field is computed as the solution of a control problem aimed at minimizing the percentage of mass reaching the lower and upper boundaries of the domain.  
To this end, we assume that the spatial region where the control is active is divided into $ N_c $ horizontal cells, and we set $ N_c = 4 $, unless otherwise specified.  We then perform a set of preliminary experiments to assess the sensitivity to the choice of the control parameters $\alpha_\textrm{x}$, $\beta_\textrm{x}$, $\alpha_\textrm{v}$, $\beta_\textrm{v}$, and $\gamma$. For simplicity, we assume to be in the fully collisional regime ($\nu = 1000$). Similar results can be obtained in the non-collisional regime and in the quasi-collisional one.  
Figure \ref{fig:energy_alfax_alfav_gamma} shows the mean boundary thermal energy computed at time $t=2$ as in \eqref{eq:energy}. On the left, we test the correlation between $\alpha_\textrm{x} = \beta_\textrm{x} = 2,\ldots, 10$ and $\alpha_\textrm{v} = \beta_\textrm{v} = 12,\ldots,20$, setting $\gamma = 2.5\times 10^{-3}$ and $M=50$. On the right, we fix   $ \alpha_\text{x} = 5 $, $ \beta_\text{x} = 2 $, $ \alpha_\text{v} = 15 $, $ \beta_\text{v} = 12 $, $M=50$, and we let $\gamma$ to vary between $10^{-4}$ and $10^{-1}$. Across all considered scenarios, the mean boundary thermal energy at the final time remains small and scales inversely with $\alpha_\textrm{v}$, while showing no apparent dependence on $\alpha_\textrm{x}$. Moreover, it decreases more rapidly and attains lower values as $\gamma$ decreases. 
\begin{figure}
	[h!]
	\centering
	\includegraphics[width=0.35\linewidth]{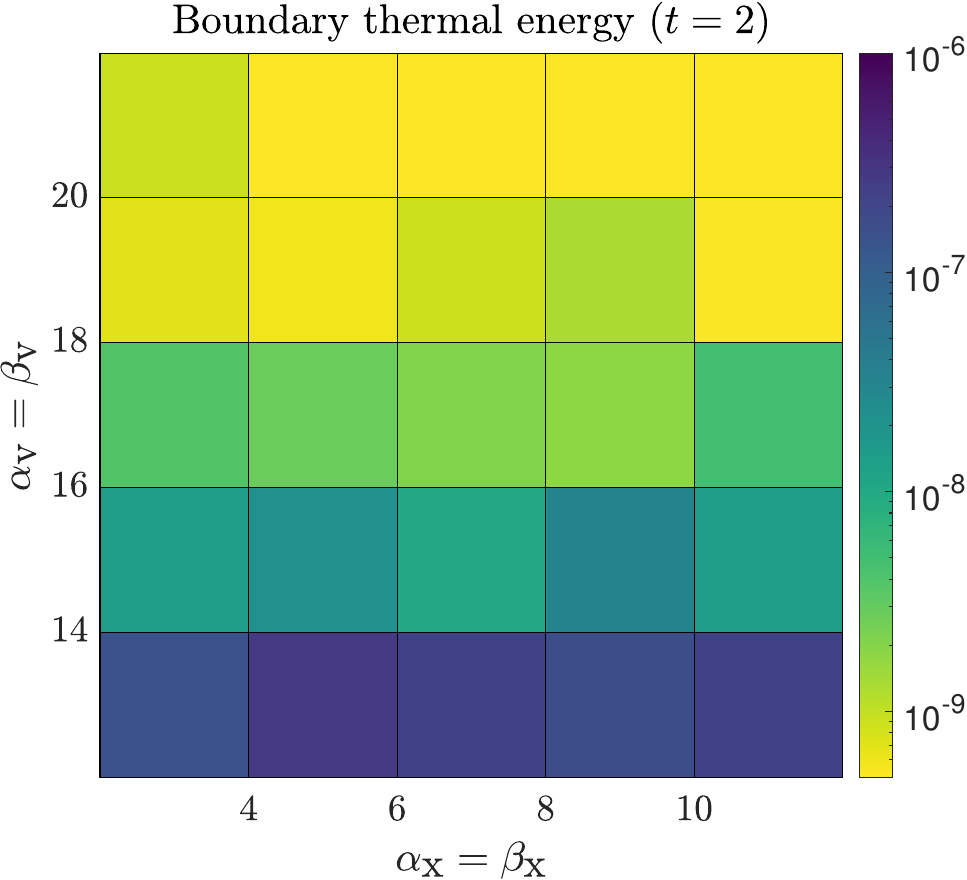}
	\includegraphics[width=0.335\linewidth]{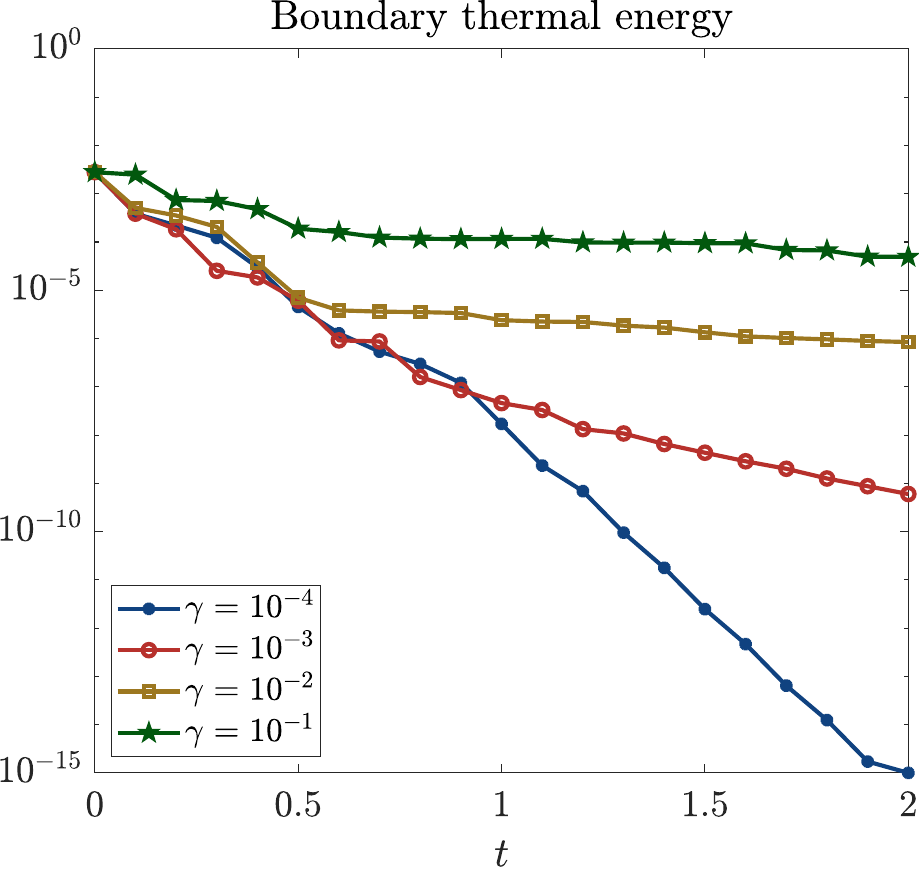}
	\caption{ Two-dimensional Sod shock tube: preliminary test. On the left, mean boundary thermal energy at time $t=2$ as $\alpha_\textrm{x}=\beta_\textrm{x}$ and $\alpha_\textrm{v} = \beta_\textrm{v}$ vary for $\gamma = 2.5\times 10^{-3}$, and $M=50$. On the right, mean boundary thermal energy for  $ \alpha_\text{x} = 5 $, $ \beta_\text{x} = 2 $, $ \alpha_\text{v} = 15 $, $ \beta_\text{v} = 12 $, $M=50$, as $\gamma$ varies between $10^{-4}$ and $10^{-1}$. } 
	\label{fig:energy_alfax_alfav_gamma}
\end{figure}  
We then examined the correlation between the pairs $(\alpha_\textrm{x},\beta_\textrm{x})$ and $(\alpha_\textrm{v},\beta_\textrm{v})$. The thermal energy at the boundaries remained essentially constant across all parameter combinations; accordingly, the corresponding plots are omitted for brevity. Other tests will be conducted later in this section to assess the effectiveness of the strategy as the maximum control strength and the initial temperature vary.  
\\\\
In the numerical experiments below, the parameters are specified as follows.
The maximum control magnitude is fixed at $M = 50$, and the control parameters are chosen as follows: $ \alpha_\text{x} = 5 $, $ \beta_\text{x} = 2 $, $ \alpha_\text{v} = 15 $, $ \beta_\text{v} = 12 $, and $ \gamma = 2.5 \times 10^{-3} $.  
The target position is set to $ \hat{y} = 0.75 $, in order to drive the mass toward the center of the domain.
Unless stated otherwise, the operator $ \mathcal{P}(\cdot) $ is defined as in equation~\eqref{eq:R_max}.  
The temperature at the boundaries is computed as
\[
T_b^n(z_i) = \rho_b^n(z_i)\, \mathcal{E}_b^n(z_i),
\]
for any $ i = 1, \ldots, N_z $, where $ \rho_b^n(\cdot) $ and $ \mathcal{E}_b^n(\cdot) $ are defined in equations~\eqref{eq:energy}–\eqref{eq:mass_boundary}.
Figures~\ref{fig:sod2D_control_Bmax_nu0}–\ref{fig:sod2D_control_Bmax_nu10}–\ref{fig:sod2D_control_Bmax_nu1000} show the controlled dynamics for three different collisional regimes: $ \nu = 0 $, $ \nu = 10 $, and $ \nu = 1000 $.  
In each figure, the first row displays snapshots of the mean density at times $ t = 0.5 $, $ t = 0.75 $, and $ t = 1 $.  
The mean trajectories of four randomly selected particles are shown in white, with their initial positions highlighted in red.
The second row shows slices of the mean density function at $ x = 0.75 $, corresponding to the same time instants, with the associated standard deviation represented as a shaded area. Additionally, the values of the magnetic field $ B(t,\xx) $ in each cell $ C_k $ are displayed using a colorbar.
\begin{figure}[h!]
	\centering
	\includegraphics[width=0.31\linewidth]{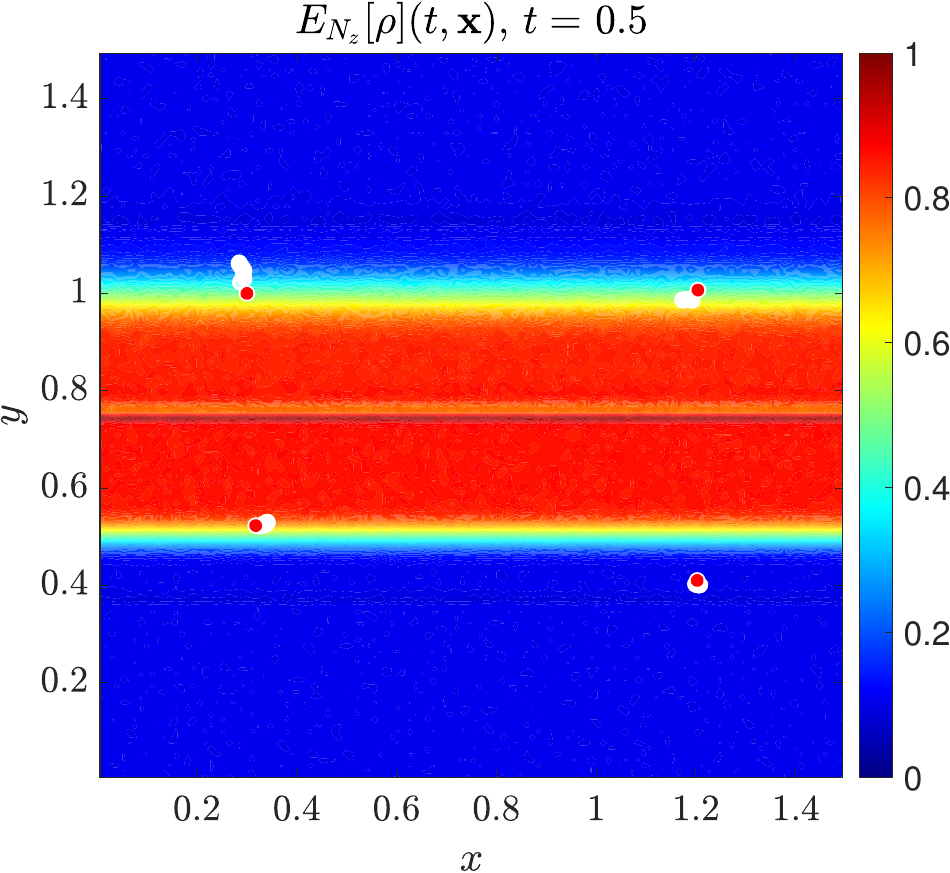}
	\includegraphics[width=0.31\linewidth]{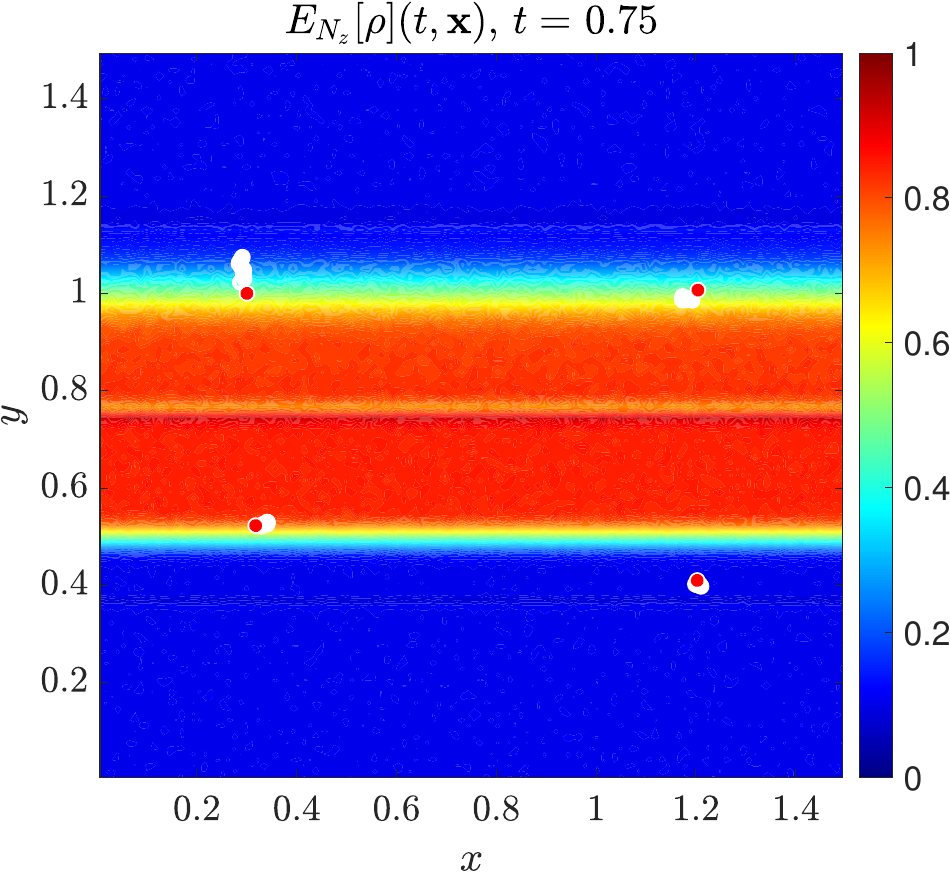}
	\includegraphics[width=0.31\linewidth]{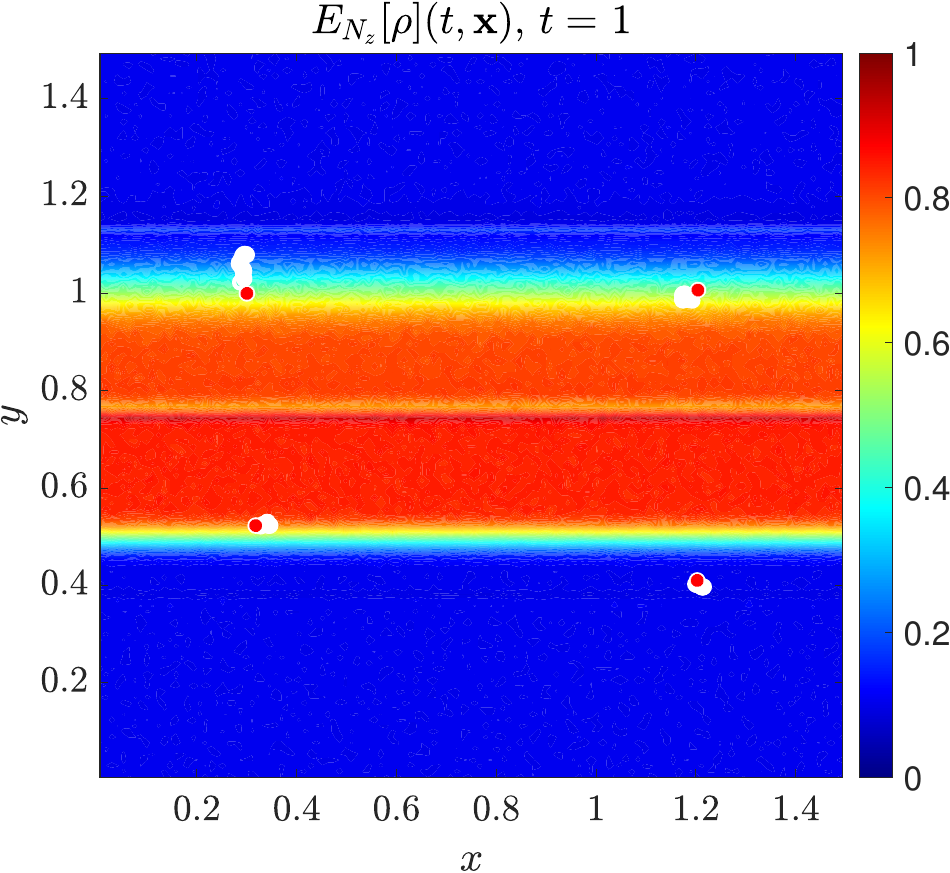}\\
	\includegraphics[width=0.3\linewidth]{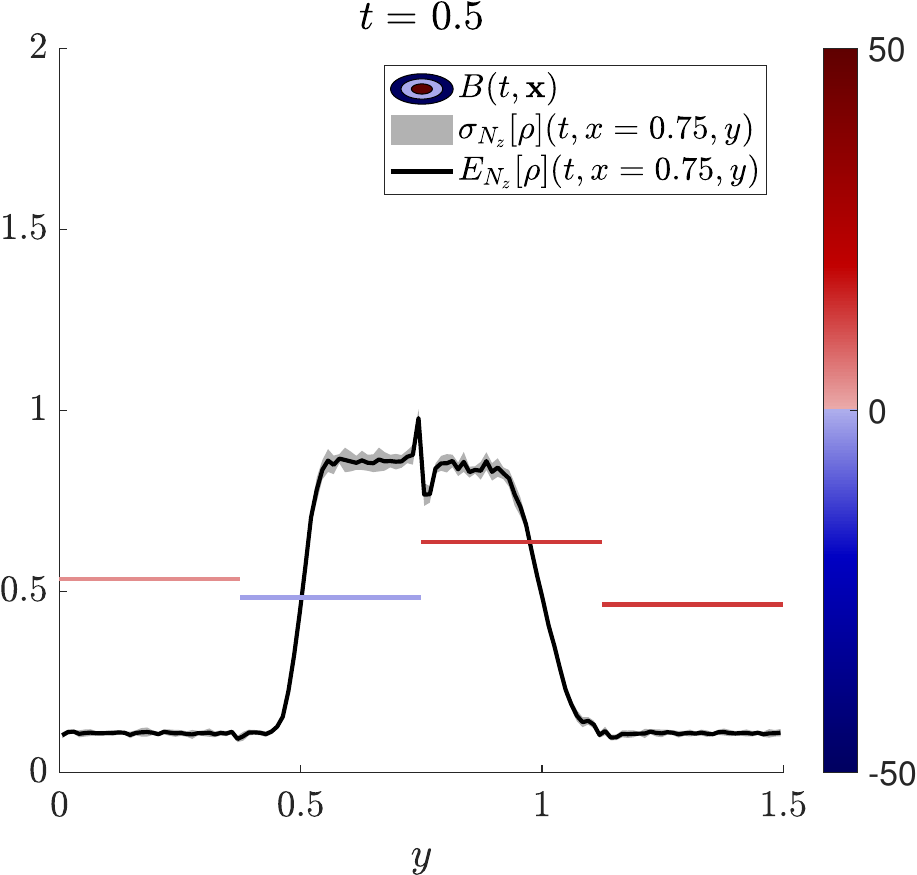}
	\includegraphics[width=0.3\linewidth]{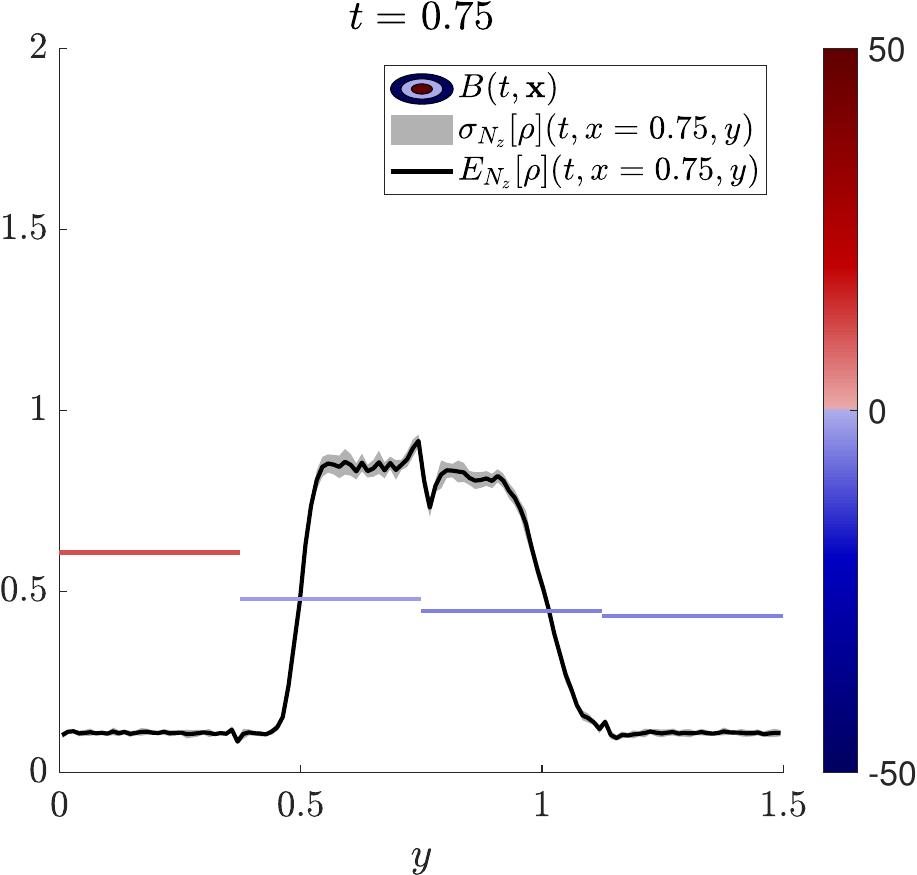}
	\includegraphics[width=0.3\linewidth]{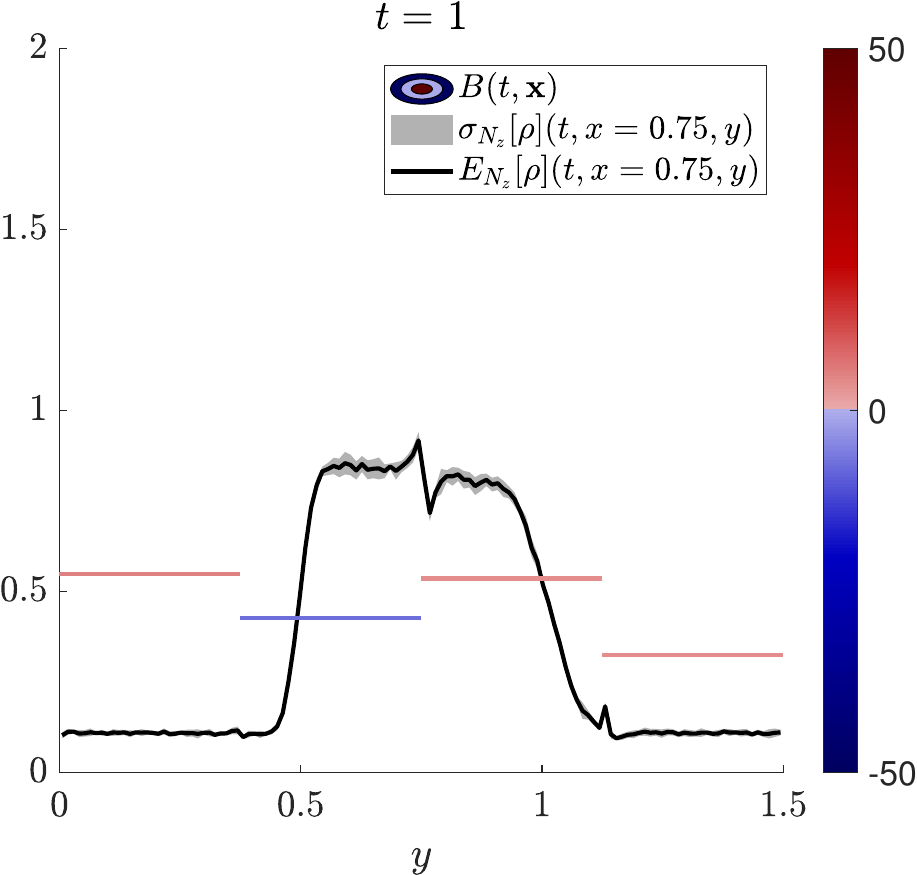}
	\caption{Two-dimensional Sod shock tube test with control and $ \nu =0 $.  Top row: snapshots of the mean density at different time instants. Bottom row: slices of the mean density at $ x = 0.75 $, with the corresponding standard deviation shown as a shaded area. The intensity of $B(t,\xx)$ in each cell $C_k$ is represented by the colorbar.}
	\label{fig:sod2D_control_Bmax_nu0}
\end{figure}
\begin{figure}[h!]
	\centering
	\includegraphics[width=0.31\linewidth]{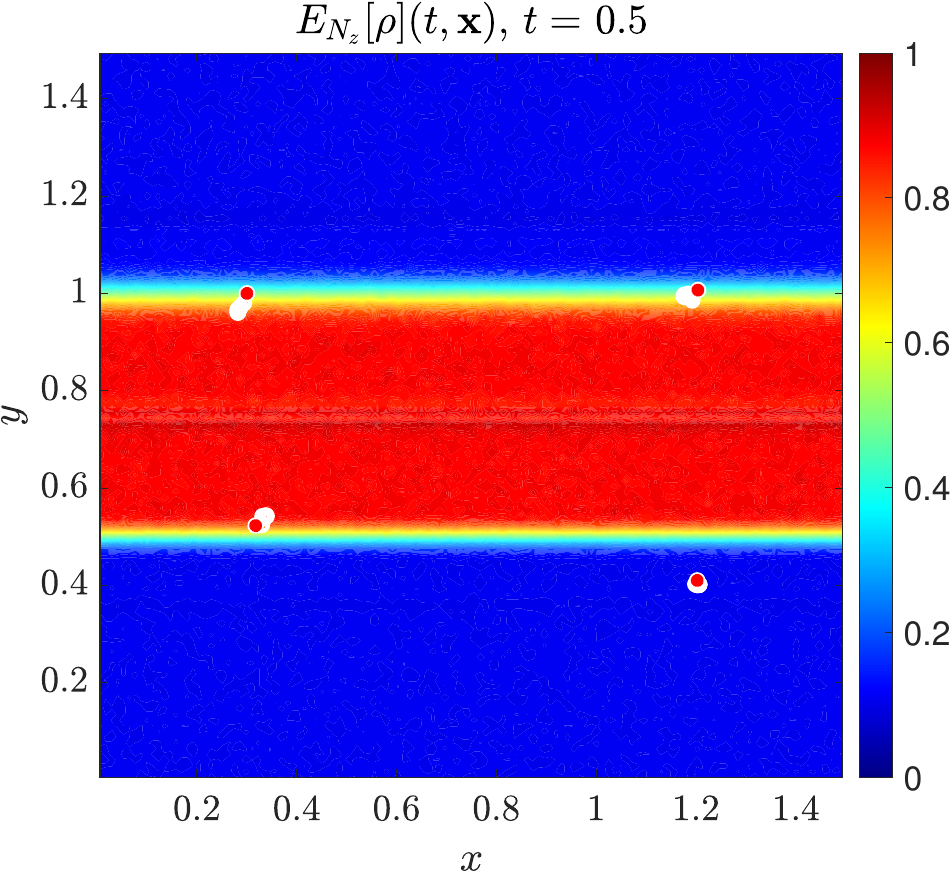}
	\includegraphics[width=0.31\linewidth]{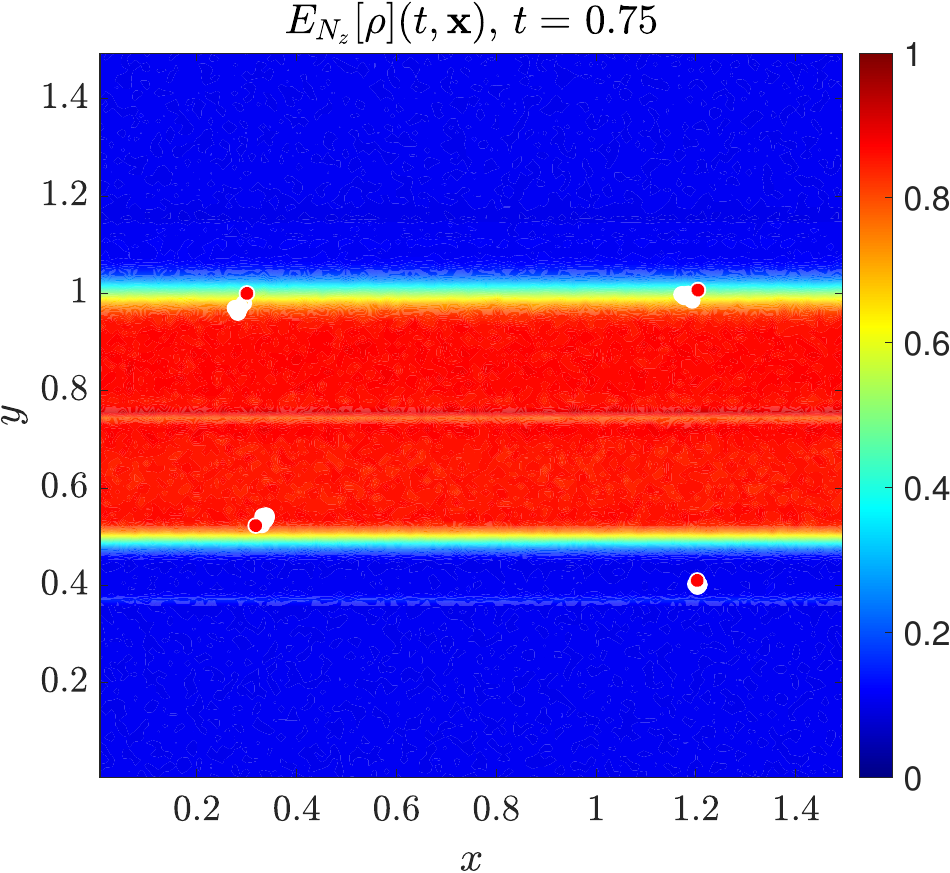}
	\includegraphics[width=0.31\linewidth]{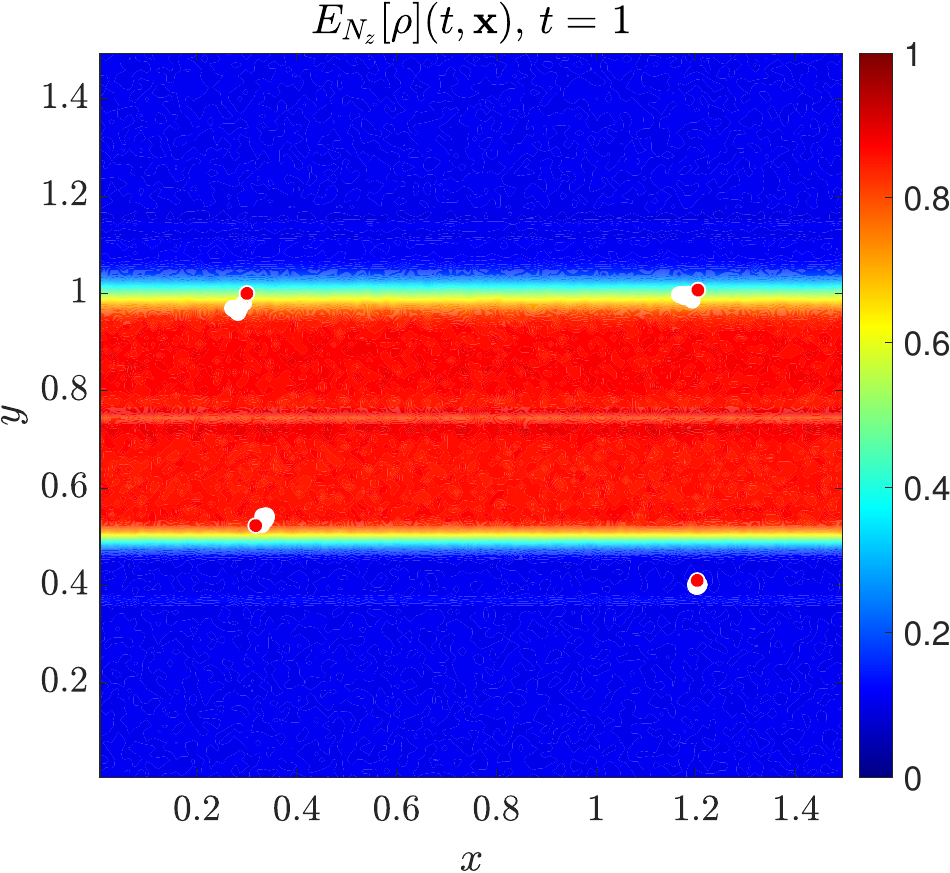}\\
	\includegraphics[width=0.3\linewidth]{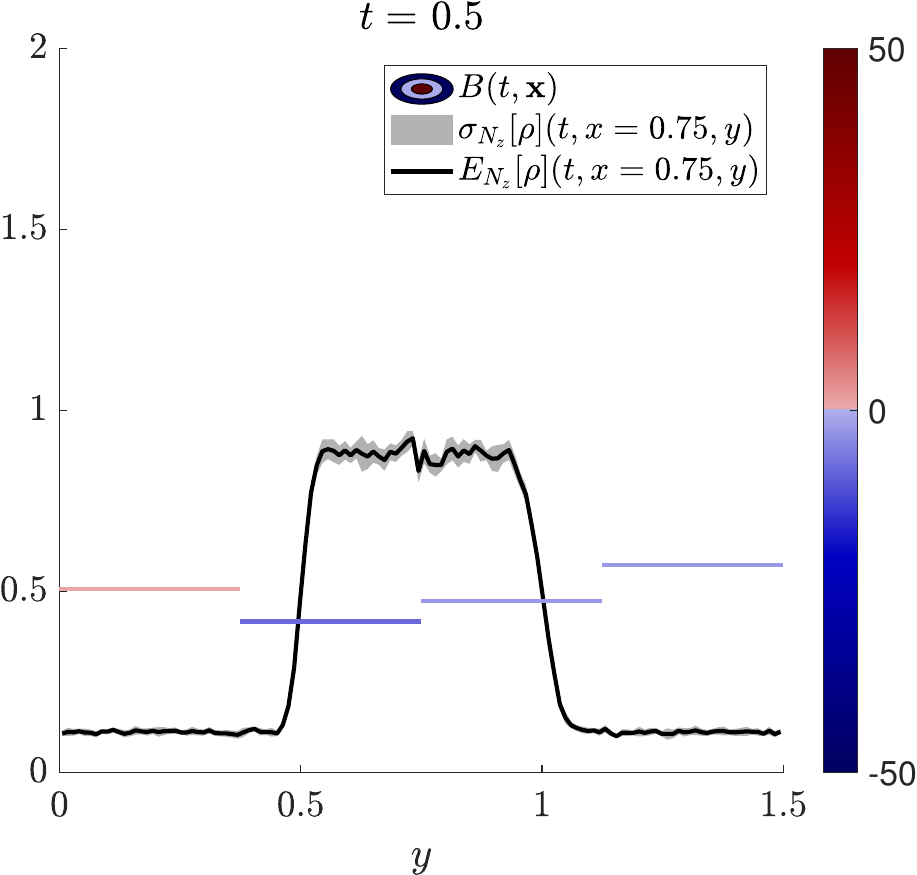}
	\includegraphics[width=0.3\linewidth]{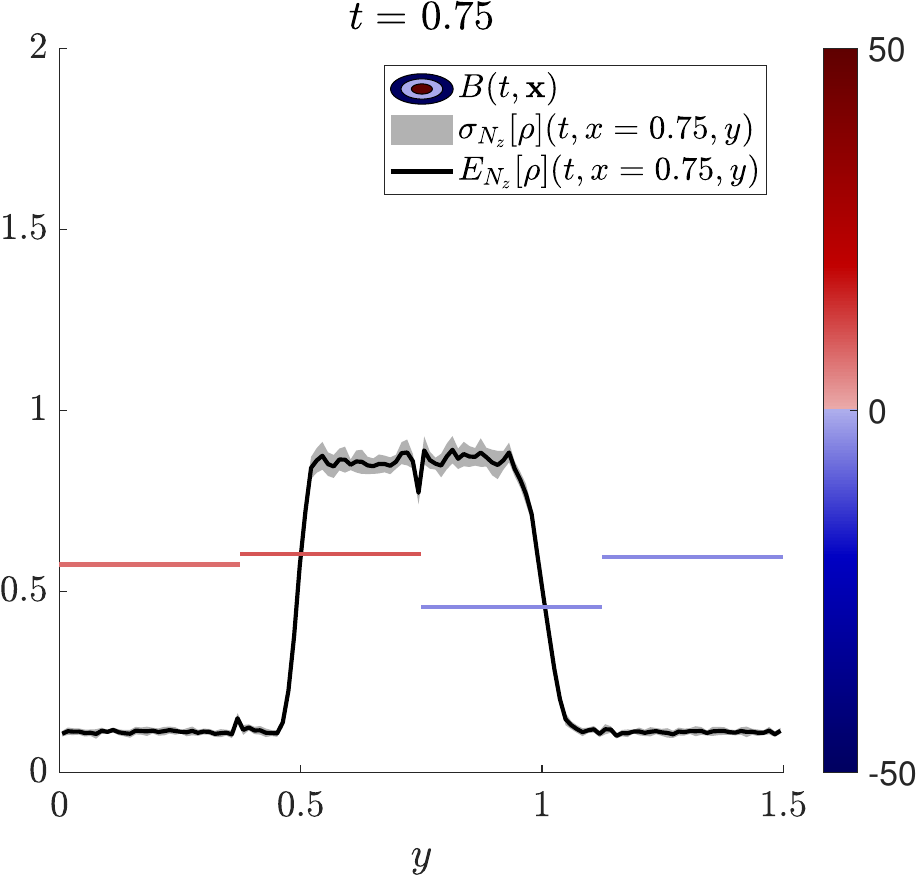}
	\includegraphics[width=0.3\linewidth]{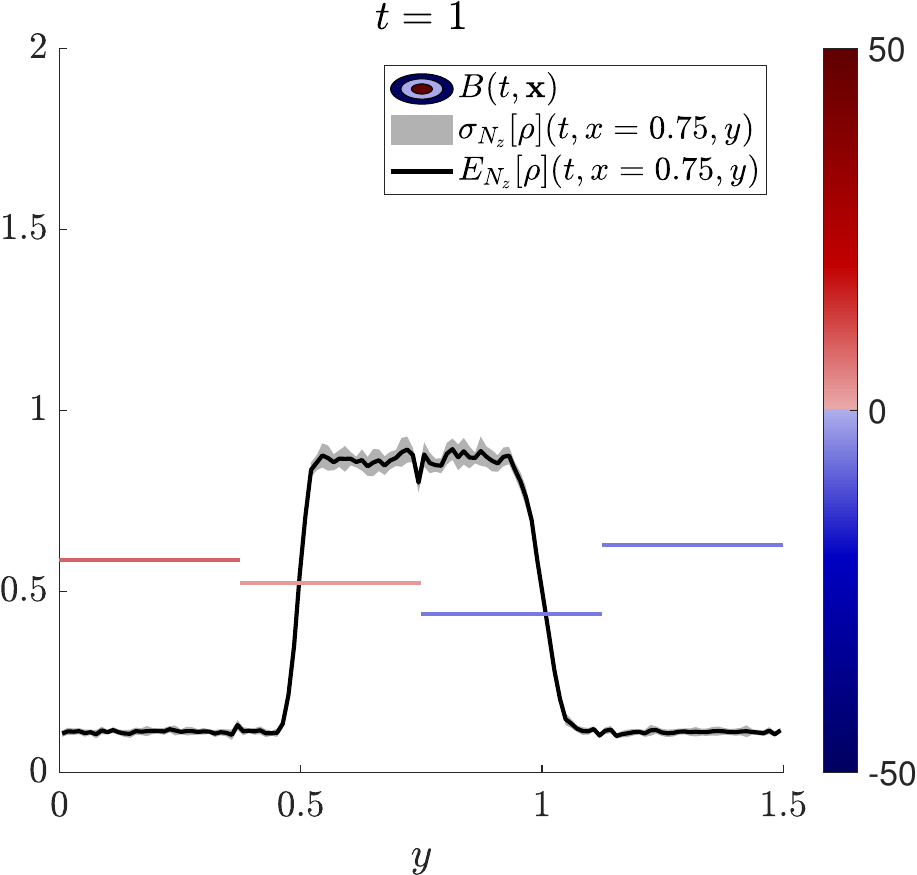}
	\caption{Two-dimensional Sod shock tube test with control and $ \nu =10 $.  Top row: snapshots of the mean density at different time instants. Bottom row: slices of the mean density at $ x = 0.75 $, with the corresponding standard deviation shown as a shaded area. The intensity of $B(t,\xx)$ in each cell $C_k$ is represented by the colorbar. }
	\label{fig:sod2D_control_Bmax_nu10}
\end{figure}
\begin{figure}[h!]
	\centering
	\includegraphics[width=0.31\linewidth]{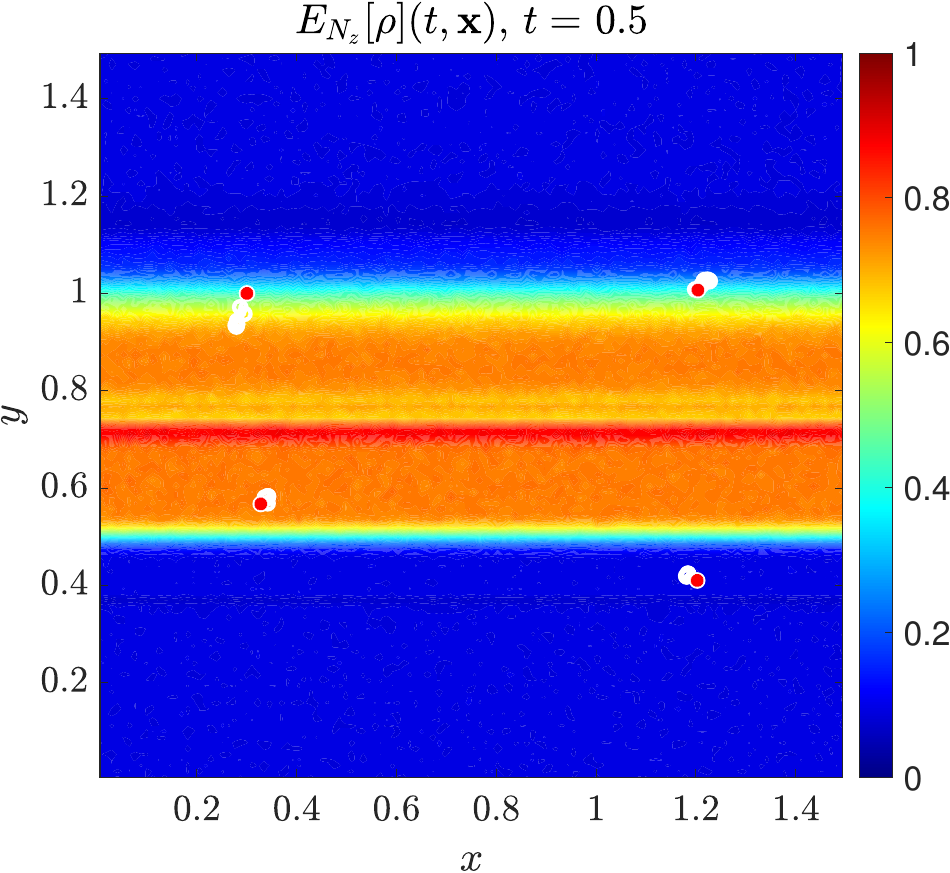}
	\includegraphics[width=0.31\linewidth]{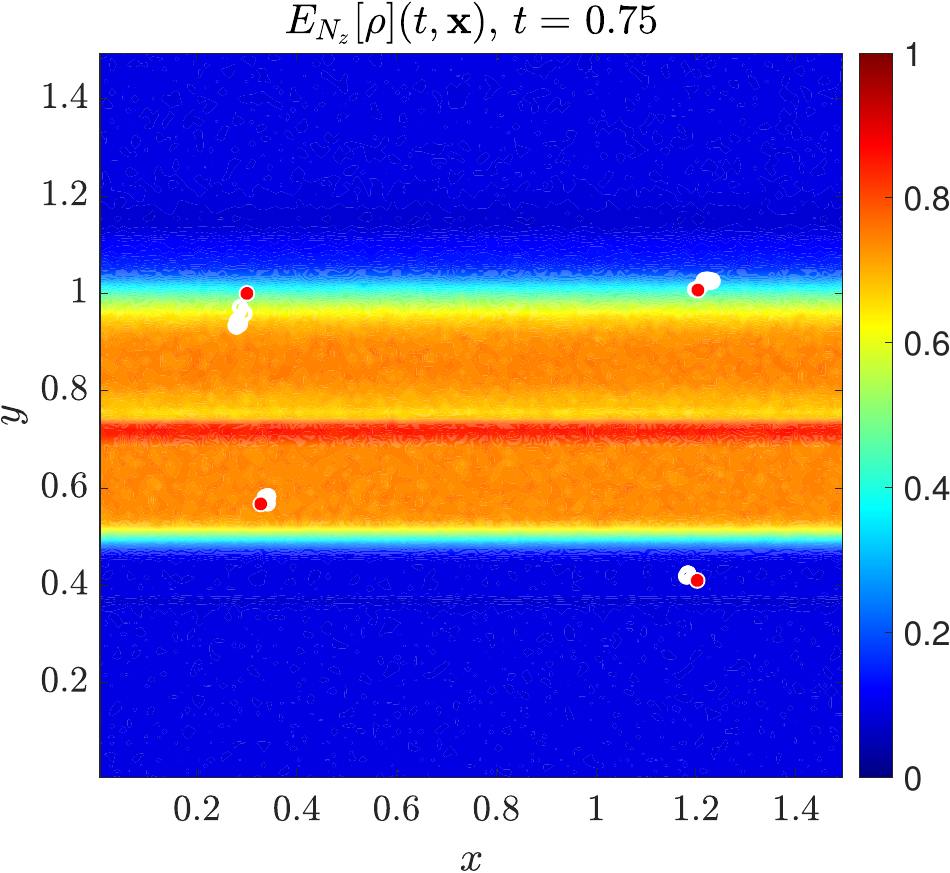}
	\includegraphics[width=0.31\linewidth]{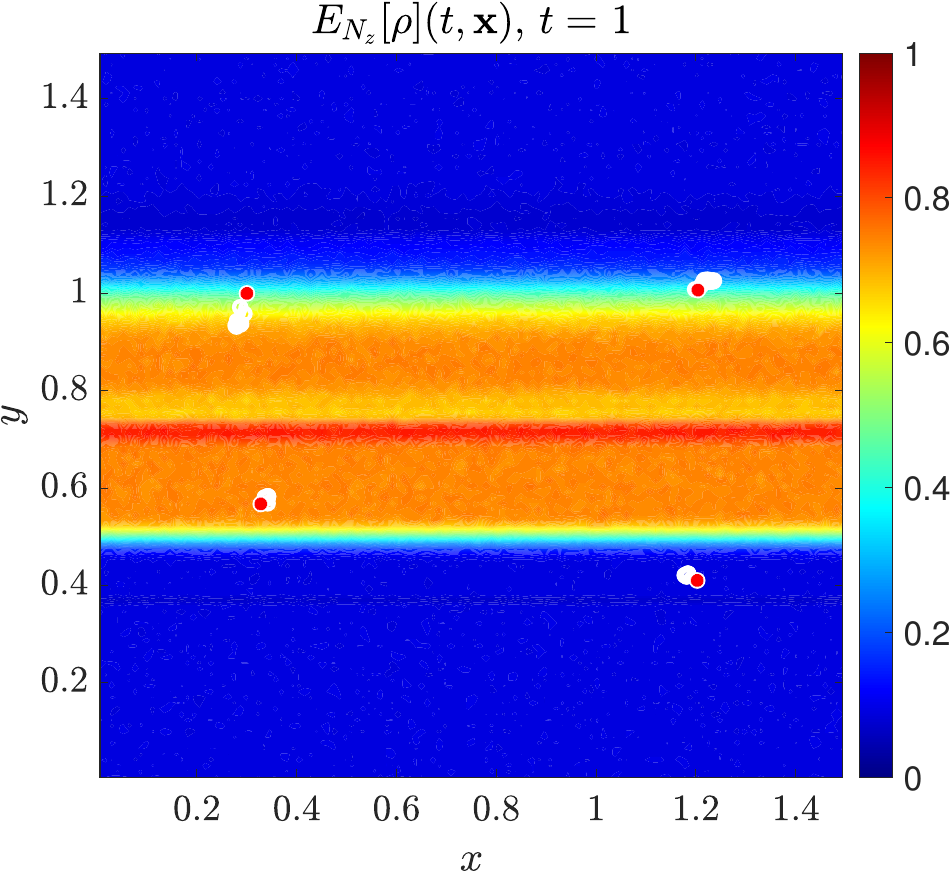}\\
	\includegraphics[width=0.3\linewidth]{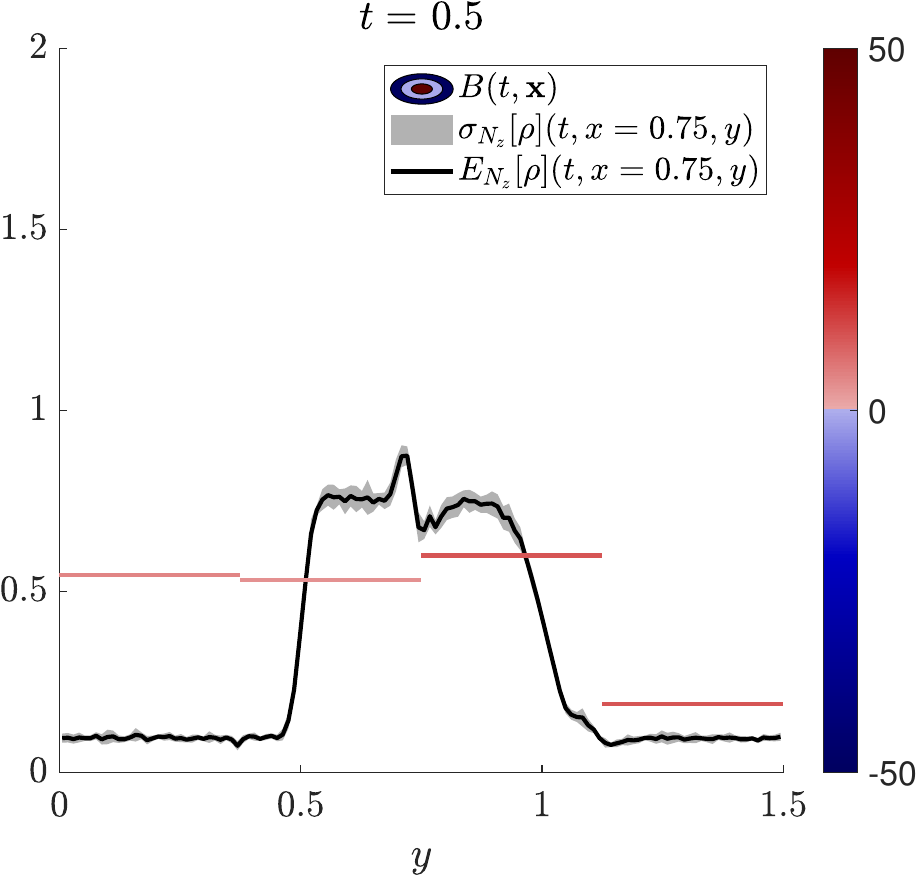}
	\includegraphics[width=0.3\linewidth]{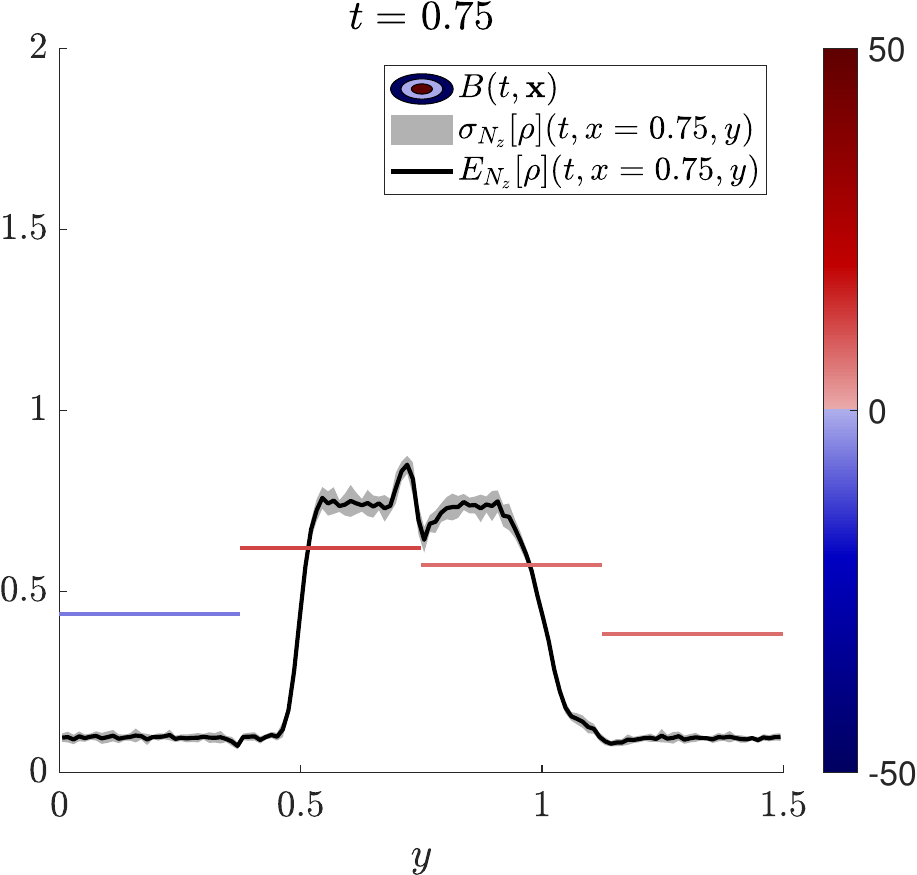}
	\includegraphics[width=0.3\linewidth]{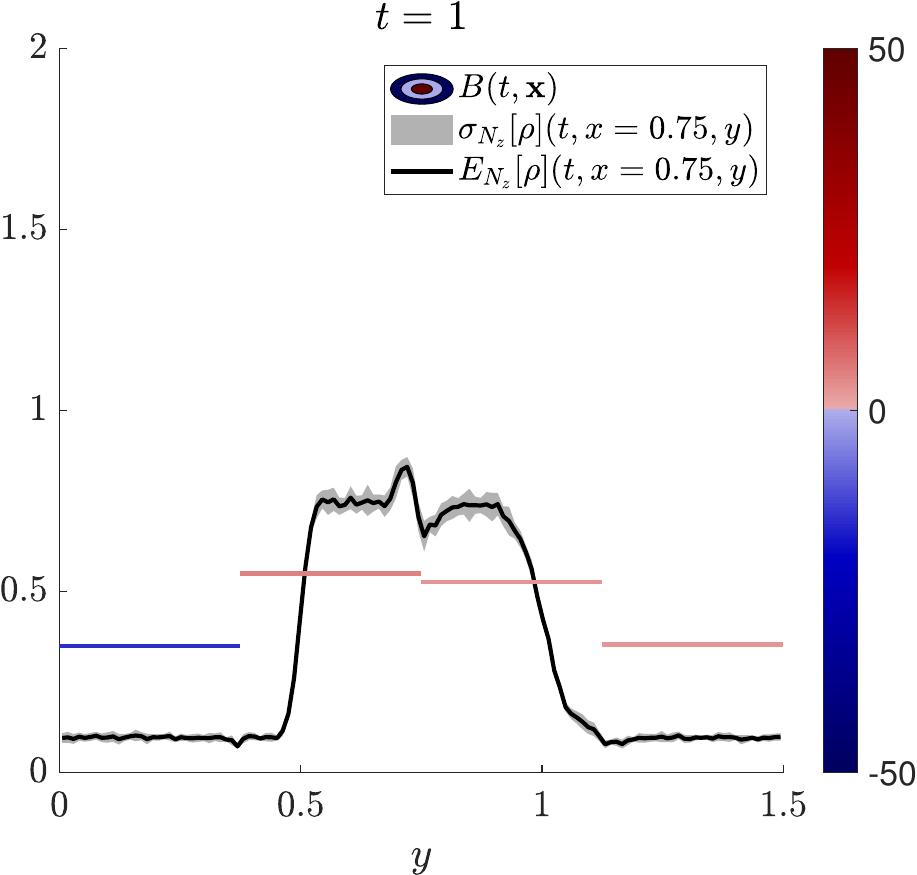}
	\caption{Two-dimensional Sod shock tube test with control and $ \nu =1000 $.  Top row: snapshots of the mean density at different time instants. Bottom row: slices of the mean density at $ x = 0.75 $, with the corresponding standard deviation shown as a shaded area. The intensity of $B(t,\xx)$ in each cell $C_k$ is represented by the colorbar.  }
	\label{fig:sod2D_control_Bmax_nu1000}
\end{figure}
Figure~\ref{fig:sod2D_control_Bmax_energy} shows, in the first row, the mean thermal energy at the boundaries, computed as in equation~\eqref{eq:energy}, for different values of $ \nu $, along with the corresponding standard deviation depicted as a shaded area.  
In all cases considered, the thermal energy decreases over time as a result of the confinement of particles near the center of the domain, highlighting the effectiveness of the adopted control strategy.  
In the second row, the temporal evolution of the magnetic field is shown for the same values of $ \nu $.
\begin{figure}[h!]
	\centering
	\includegraphics[width=0.3\linewidth]{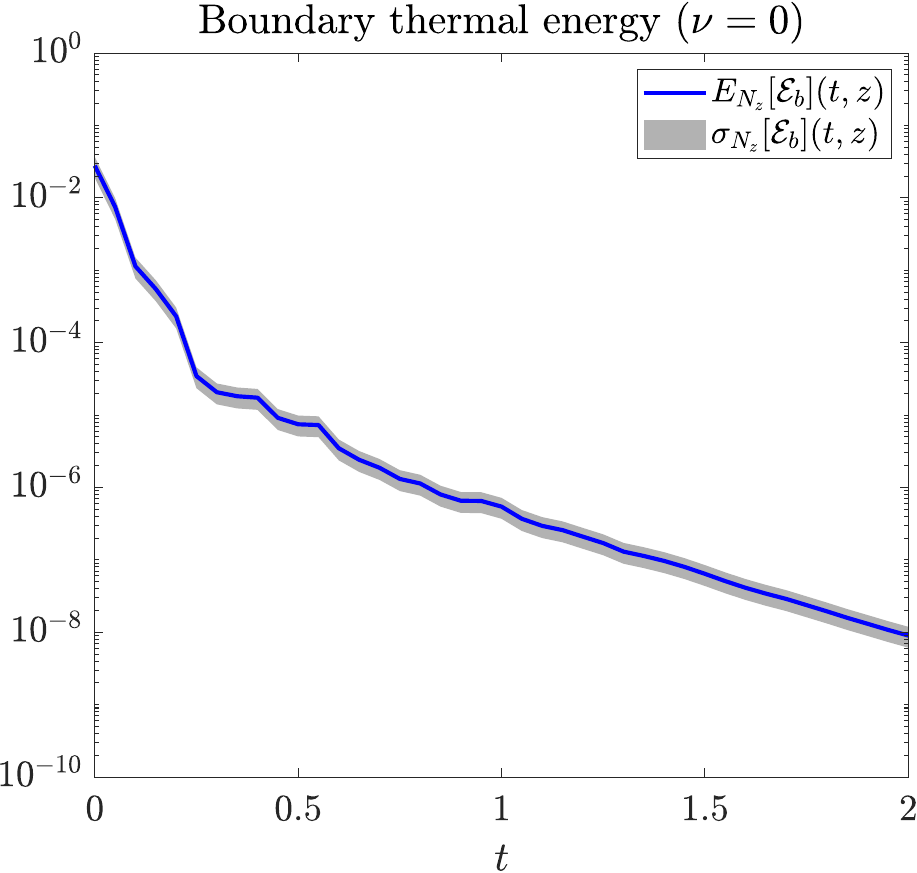}
	\includegraphics[width=0.3\linewidth]{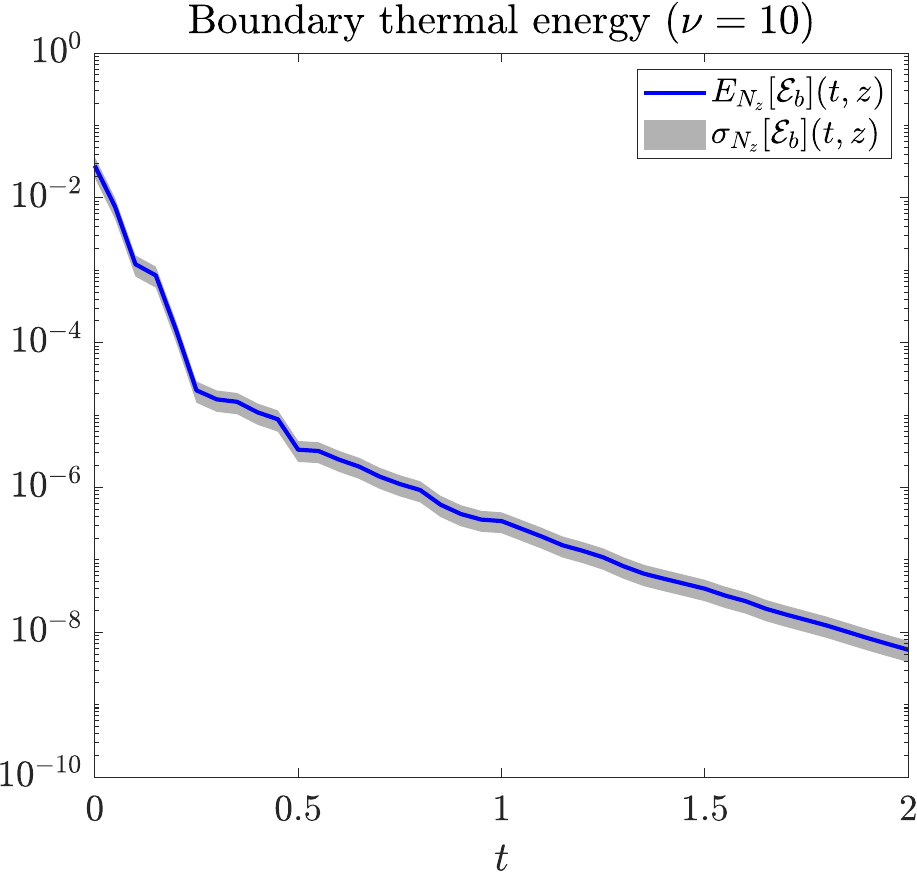}
	\includegraphics[width=0.3\linewidth]{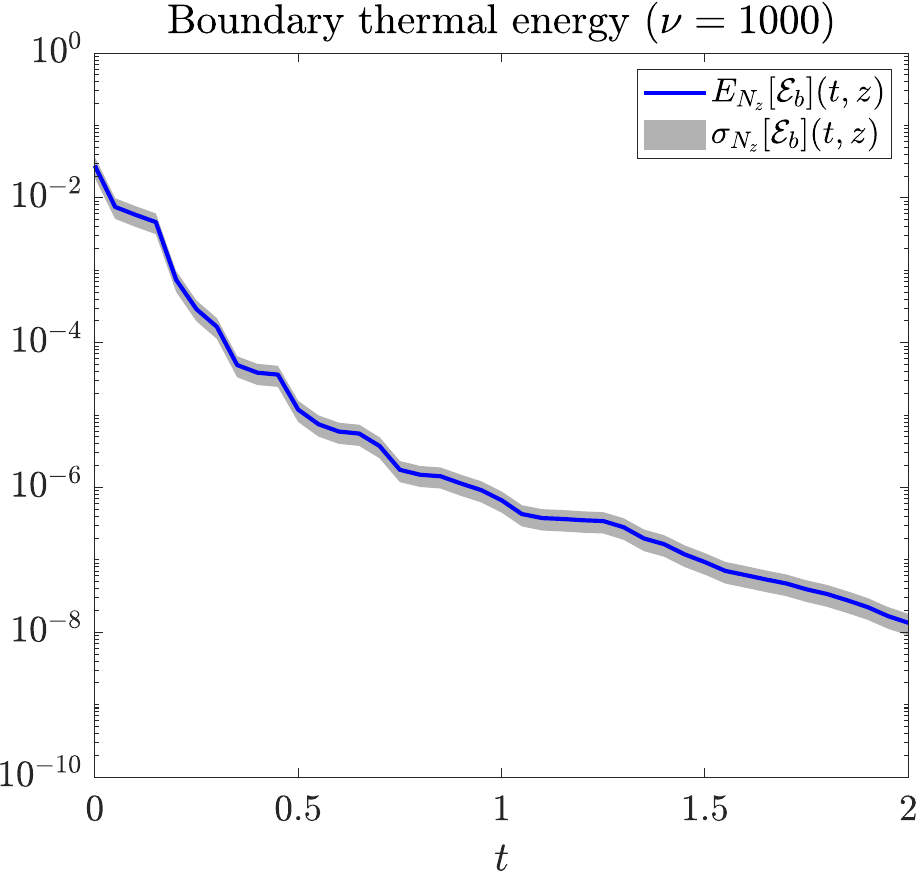}\\
	\includegraphics[width=0.3\linewidth]{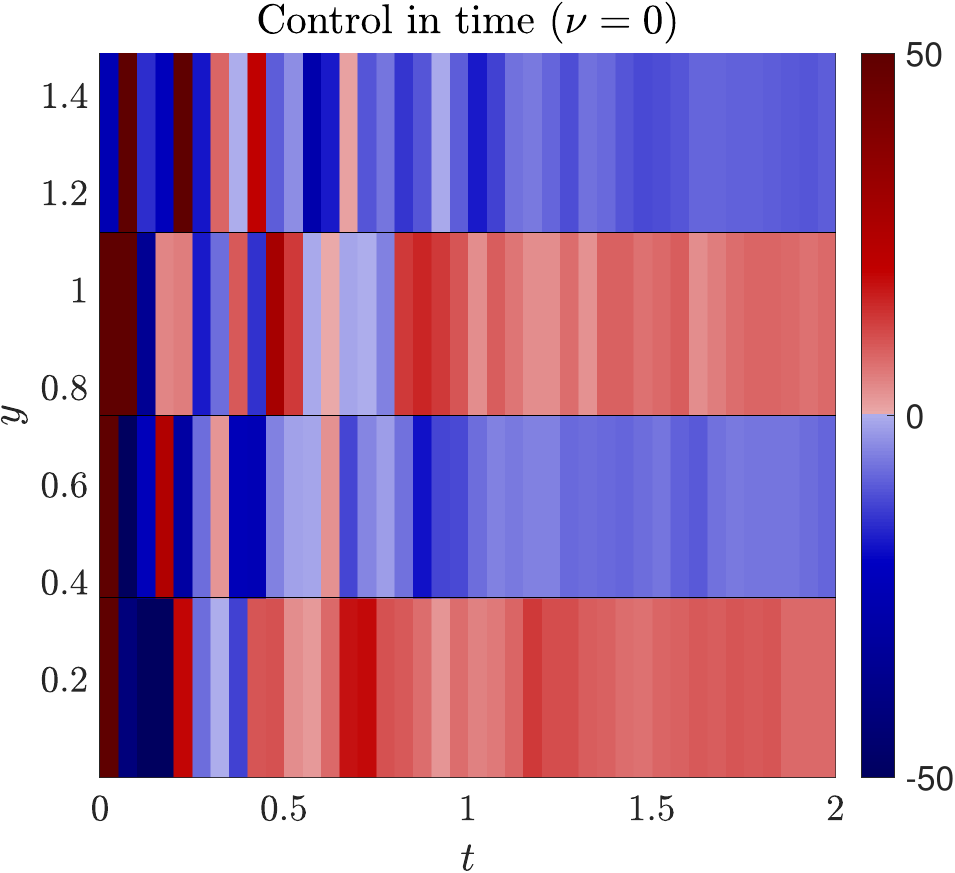} 	\includegraphics[width=0.3\linewidth]{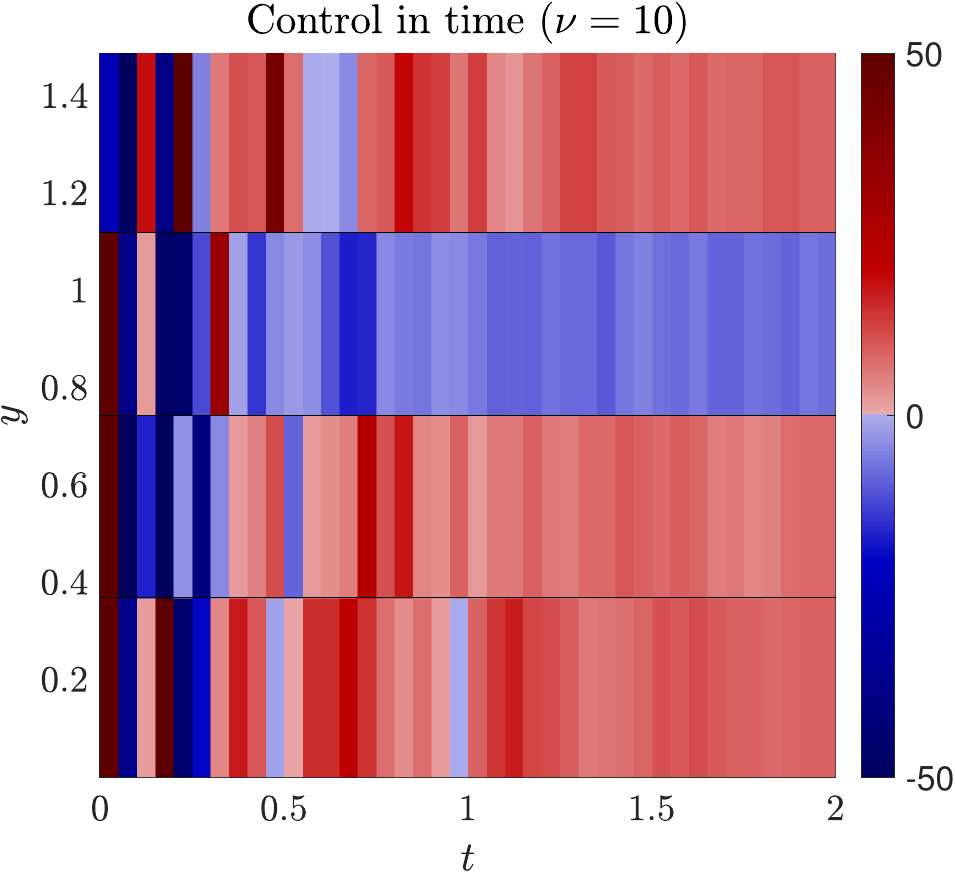} 
	\includegraphics[width=0.3\linewidth]{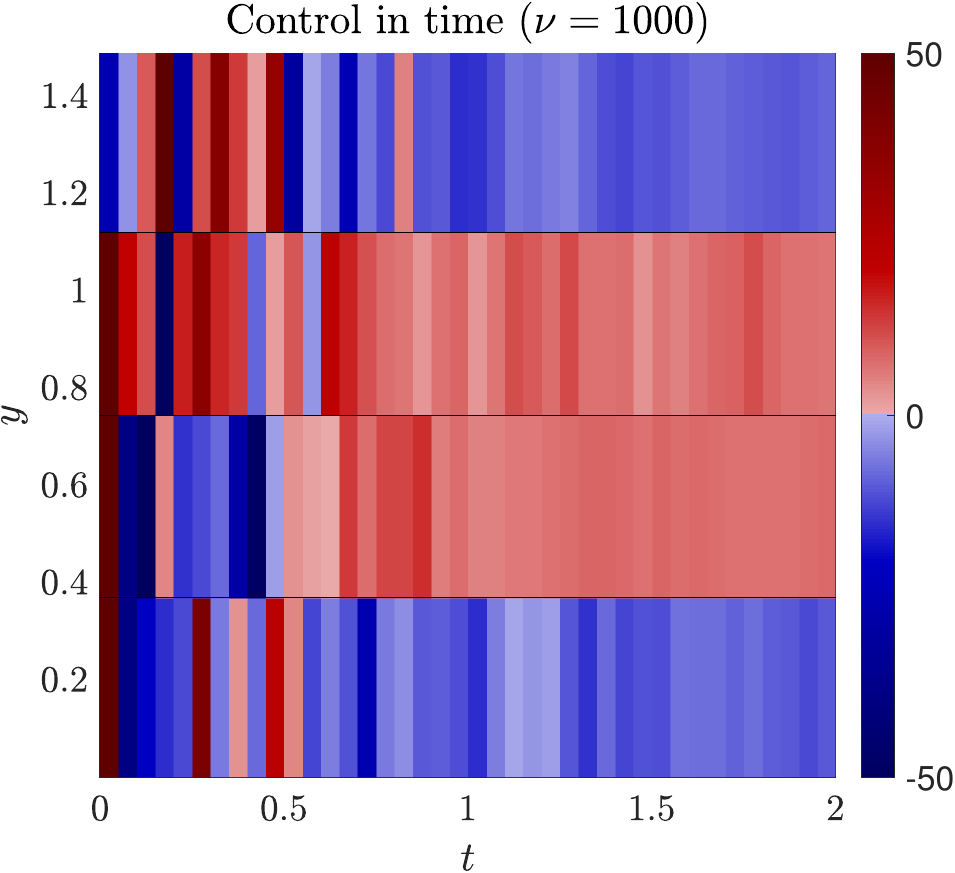} 
	\caption{Two dimensional sod shock tube test with control. First row: mean thermal energy at the boundaries and the relative standard deviation depicted as a shaded area are shown. Second row: the value of the magnetic field is reported. On the left $\nu = 0$, in the centre $\nu = 10$, and on the right $\nu = 1000$.}
	\label{fig:sod2D_control_Bmax_energy}
\end{figure} 

In Figure~\ref{fig:comparison_Bmax_Bmean}, we compare the thermal energy at the boundaries obtained using two different functionals within the control strategy. In particular, we consider the cases where $ \mathcal{P}[\cdot] $ is defined as in equations~\eqref{eq:R_mean} and~\eqref{eq:R_max}, and denote the corresponding controls by $ B_{\text{mean}} $ and $ B_{\text{max}} $, respectively.
For both cases, we plot the mean thermal energy at the boundaries along with the corresponding standard deviation, represented as a shaded area, under the same three collisional regimes previously considered.  
In all scenarios, the control strategy based on minimizing the worst-case behavior ($ B_{\text{max}} $) proves to be slightly more effective than the one based on the average behavior ($ B_{\text{mean}} $) in achieving the desired confinement.
\begin{figure}[h!]
	\centering
	\includegraphics[width=0.3\linewidth]{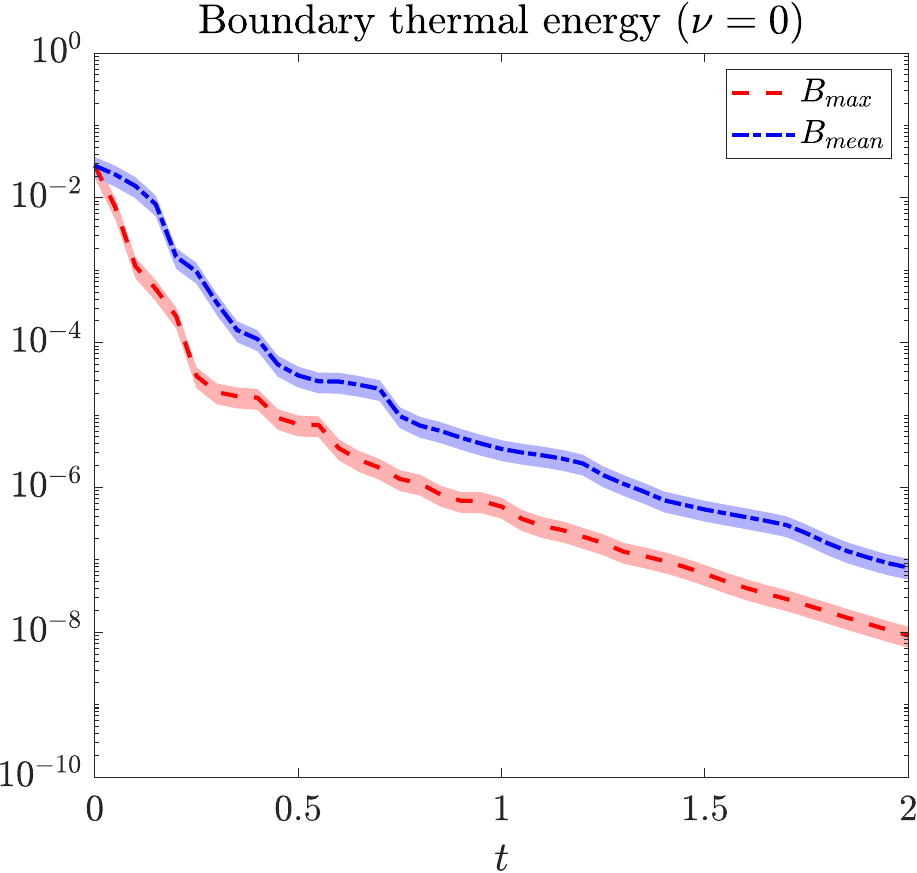}  
	\includegraphics[width=0.3\linewidth]{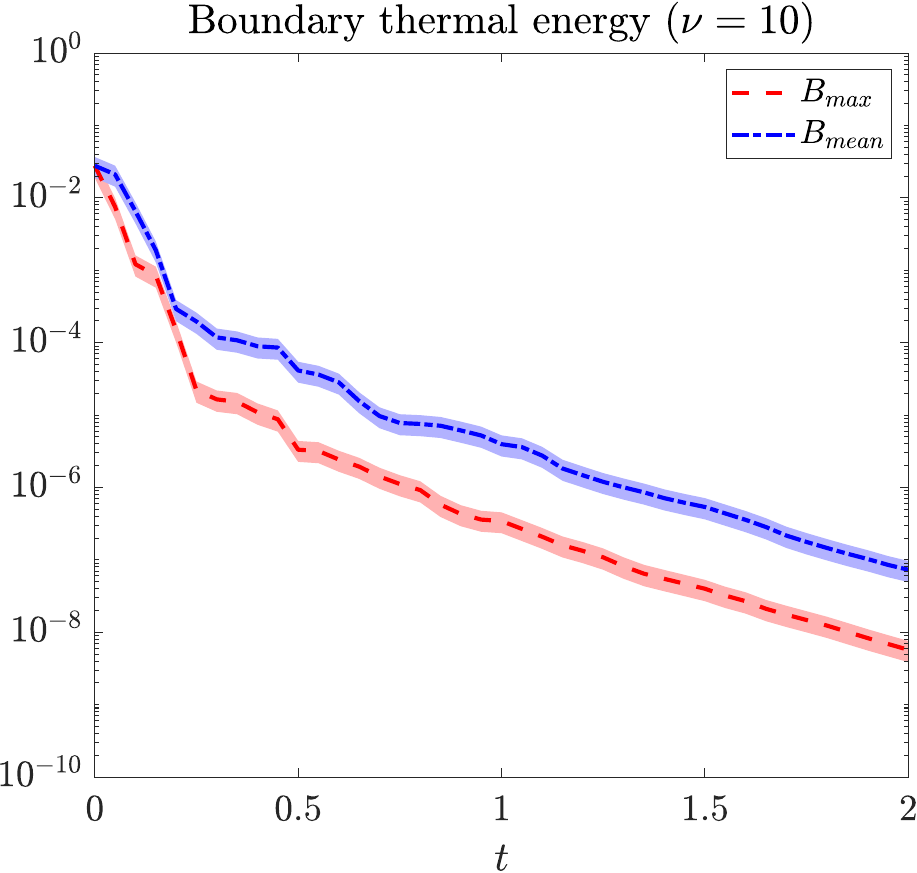}  
	\includegraphics[width=0.3\linewidth]{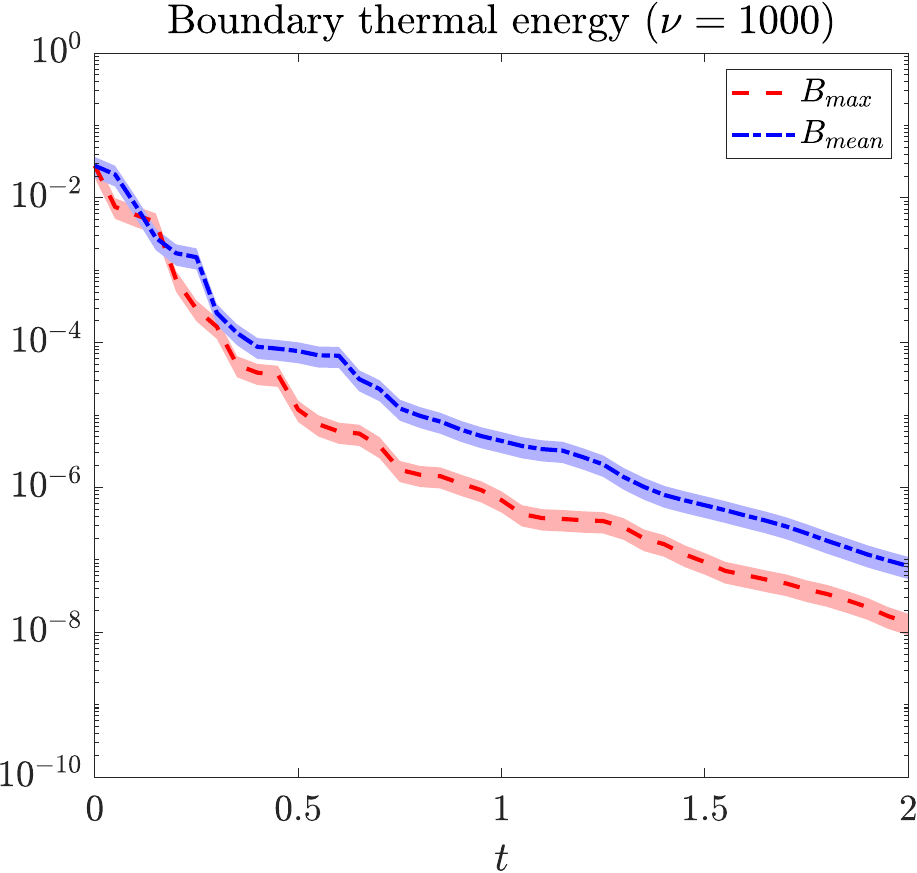}  
	\caption{Two dimensional sod shock tube test with control. Mean thermal energy and relative standard deviation at the boundaries using $\mathcal{P}(\cdot)$ as defined in \eqref{eq:R_max} (referred to as $B_{max}$) and \eqref{eq:R_mean} (referred to as $B_{mean}$). On the left $\nu = 0$, in the centre $\nu = 10$, and on the right $\nu = 1000$. }
	\label{fig:comparison_Bmax_Bmean}
\end{figure} 

We now test the proposed strategy in long time by assuming $T=6$, and that the magnetic field remains fixed for a certain number of time steps, $n_{switch}$.  We rely on a simplify scenario, setting $z=0$, and we suppose to be in the fully collisional regime, setting $\nu = 1000$. Figure \ref{fig:B_fixed} on the left shows the thermal energy at the boundary assuming $n_{switch} = 1,2,4,6,8,10$, and in the center and on the right the value of the control in time for $n_{switch} = 2$ and $n_{switch} = 10$ respectively.  While keeping the control fixed over multiple time steps proves effective in the short term, we observe that over longer times the thermal energy begins to oscillate. This indicates that, even though the values remain small, part of the mass gradually moves toward the $y$-boundaries of the domain, as expected. The shape of the control for $n_{switch}=10$ suggests that in long time the strategy is no more effective, due to the presence of oscillations in the value of the thermal energy at the boundaries. 
\begin{figure}[h!]
	\centering
	\includegraphics[width=0.29\linewidth]{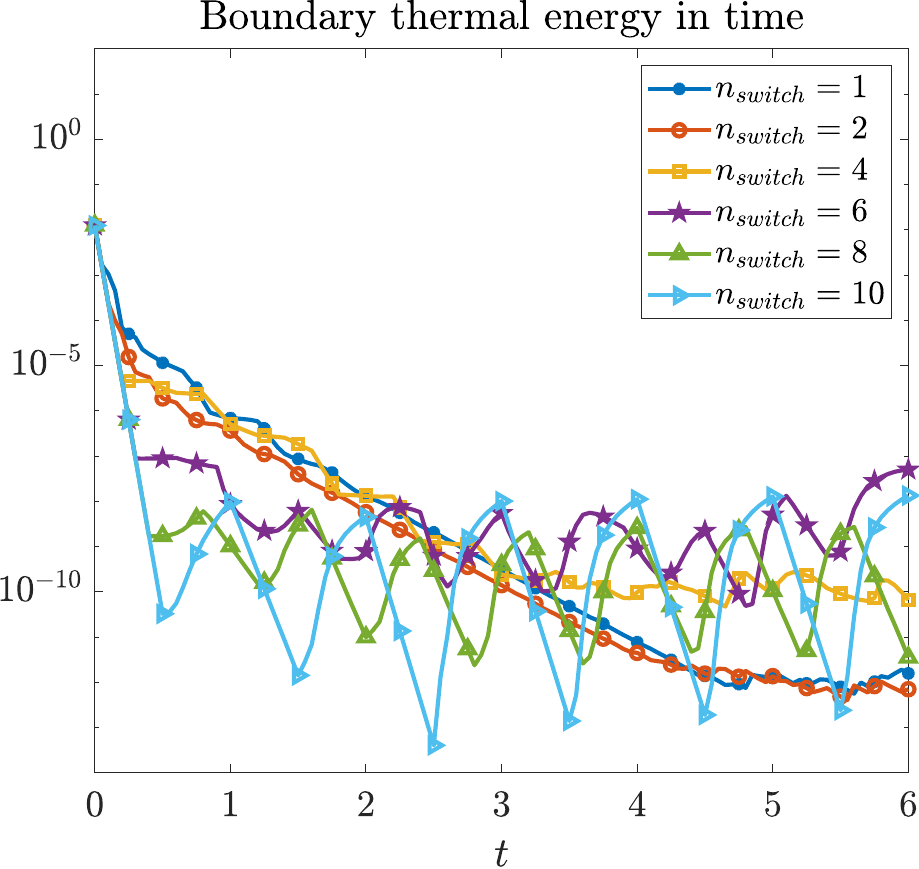}  
	\includegraphics[width=0.3\linewidth]{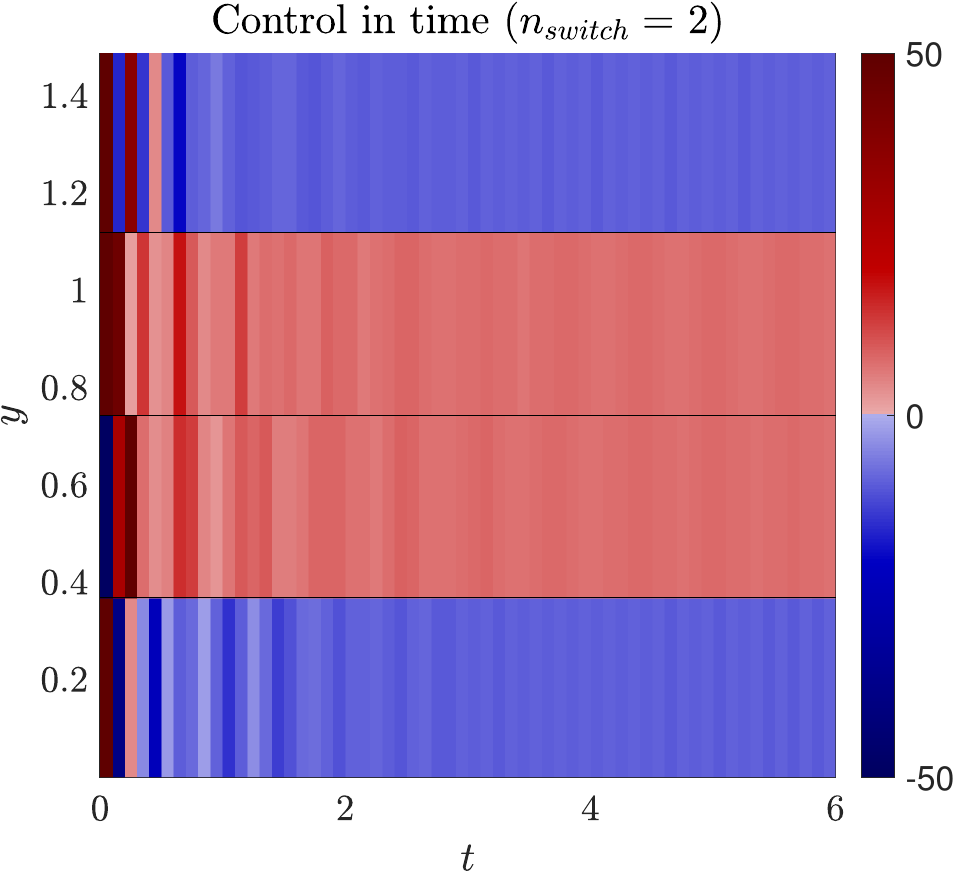}  
	\includegraphics[width=0.3\linewidth]{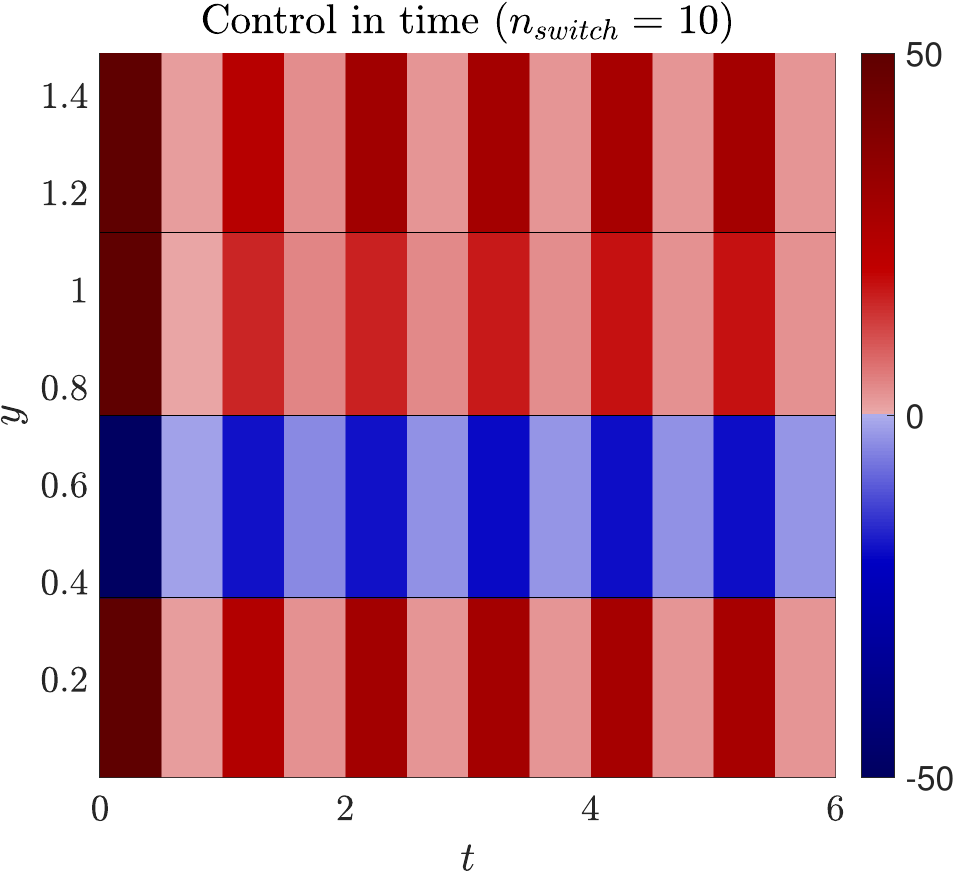}  
	\caption{Two dimensional sod shock tube test with control. On the left, thermal energy at the boundaries as the magnetic field remains fixed for $n_{switch}$ time steps. In the center and on the right, the value of the control $B$ in time for $n_{switch} = 2$ and $n_{switch} = 10$. }  
	\label{fig:B_fixed}
\end{figure}   

We conclude by testing the effectiveness of the proposed control strategy under more challenging conditions, by increasing both the initial temperature of the plasma and the number of horizontal cells $ N_c $ over which the control is active.  
We consider a deterministic setting by fixing $ z = 0 $, and we assume the system operates in the fully collisional regime.  
Starting from the initial temperature $ T_0 $ defined in equation~\eqref{eq:rho0_T0_2D} (as used in the previous tests), we introduce three additional temperature profiles defined as follows:
\begin{equation}\label{eq:T0_10}
	\begin{split}
		\tilde{T}_0(\xx) &= \chi(y \in [0,0.5) \cup (1,1.5)) + 10\, \chi(y \in [0.5,1]), \\
		\bar{T}_0(\xx)    &= 5\, \chi(y \in [0,0.5) \cup (1,1.5)) + 50\, \chi(y \in [0.5,1]), \\
		\hat{T}_0(\xx)    &= 10\, \chi(y \in [0,0.5) \cup (1,1.5)) + 100\, \chi(y \in [0.5,1]).
	\end{split}
\end{equation}
The first row of Figure~\ref{fig:comparison_T0} shows, on the left, the evolution of thermal energy at the boundaries over time as the initial temperature increases, and on the right, the effect of varying the number of horizontal control cells $ N_c $, for a fixed initial temperature $ T_0(\xx) = \hat{T}_0 $.  
The second row reports the evolution in time of the control field for different values of $ N_c $, again using $ \hat{T}_0 $ as the initial temperature.
The results confirm that the effectiveness of the control strategy is inversely proportional to the initial plasma temperature, as expected.  
Moreover, the plot on the right suggests that increasing the number of horizontal cells does not lead to significant improvements in thermal energy reduction.
\begin{figure}[h!]
	\centering
	\includegraphics[width=0.3\linewidth]{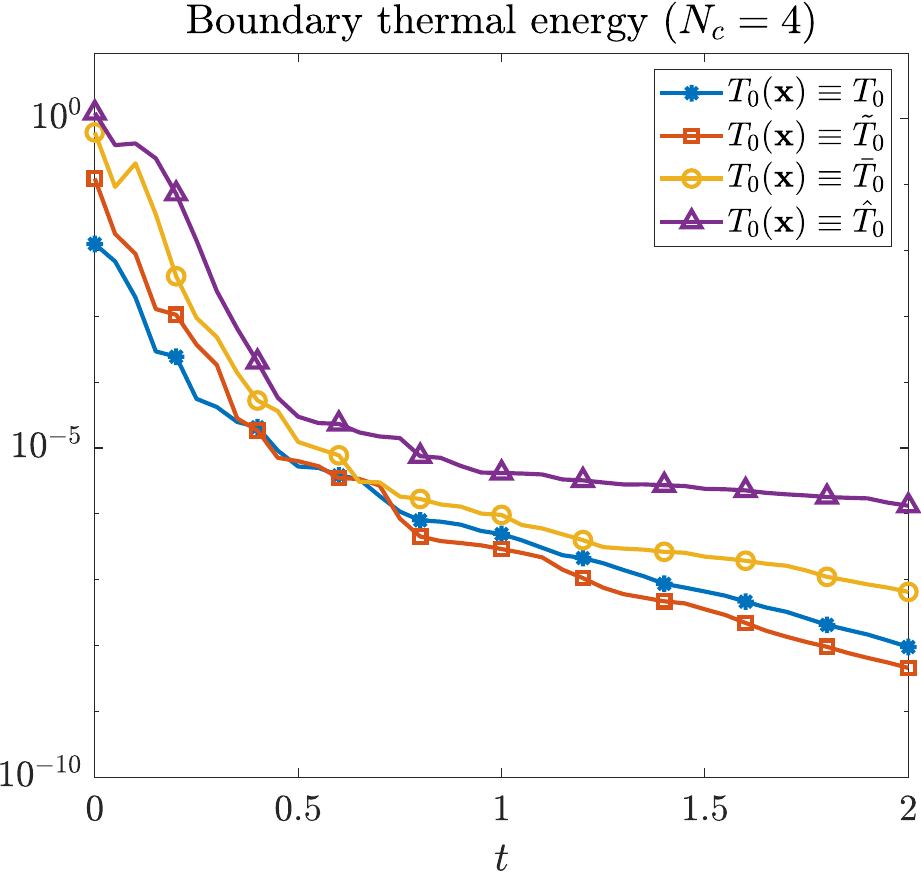} 
	\includegraphics[width=0.3\linewidth]{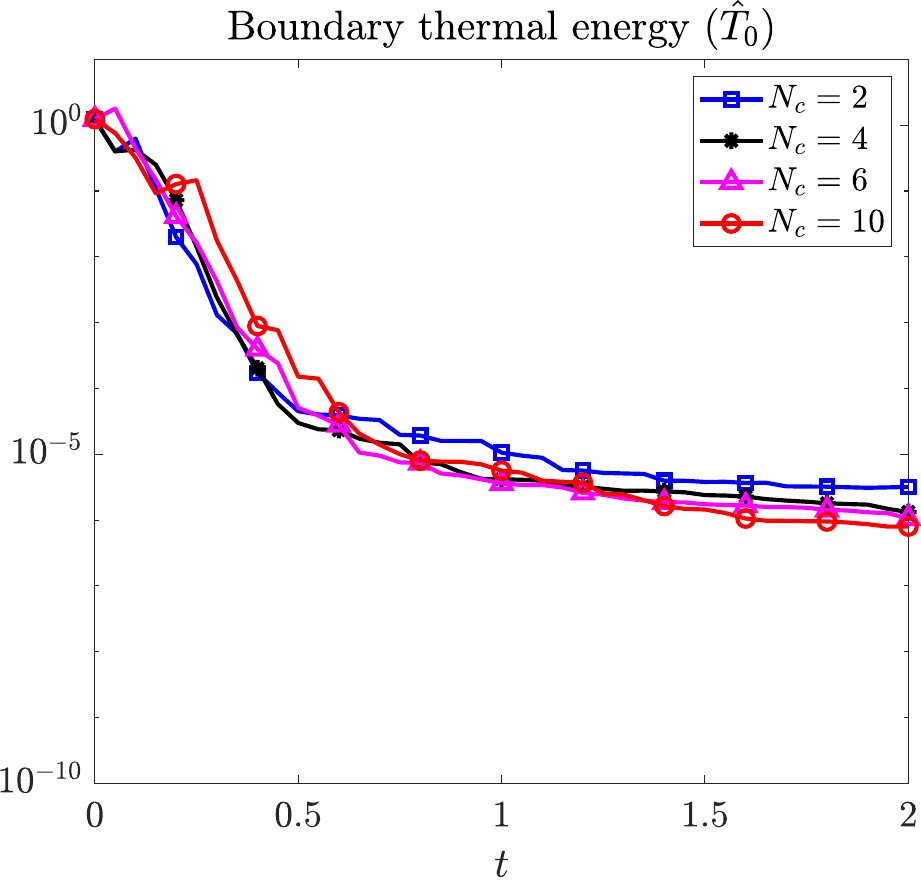}  \\
	\includegraphics[width=0.3\linewidth]{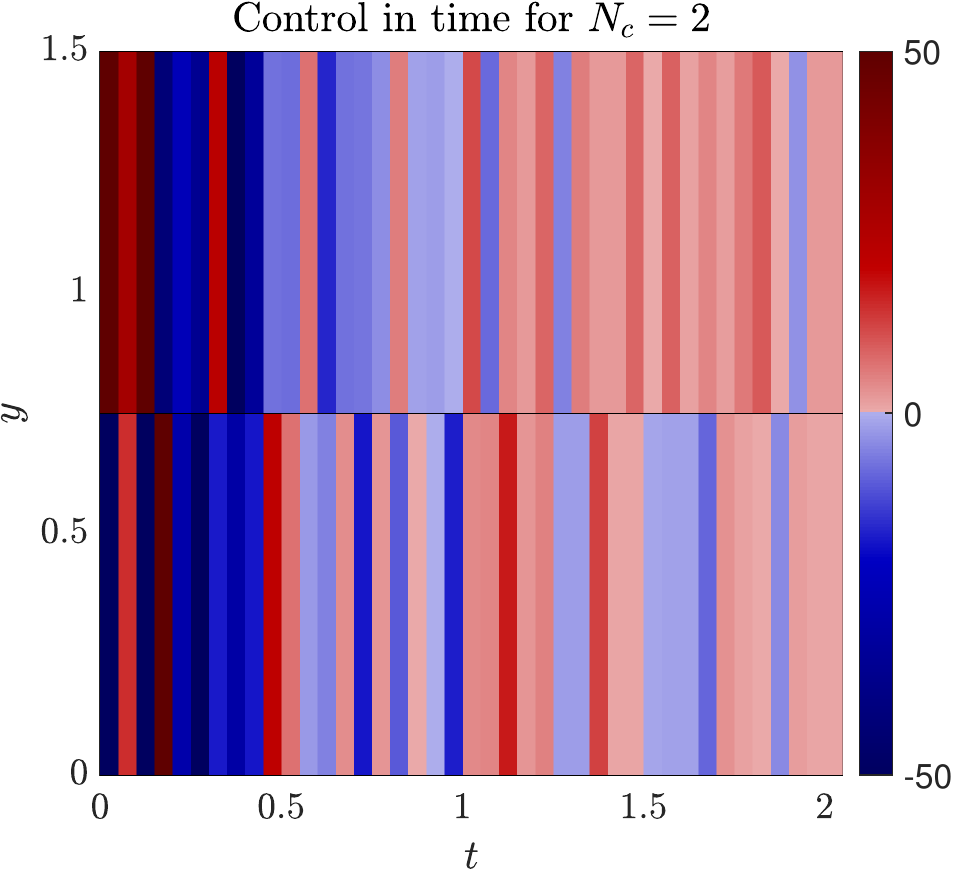}
	\includegraphics[width=0.3\linewidth]{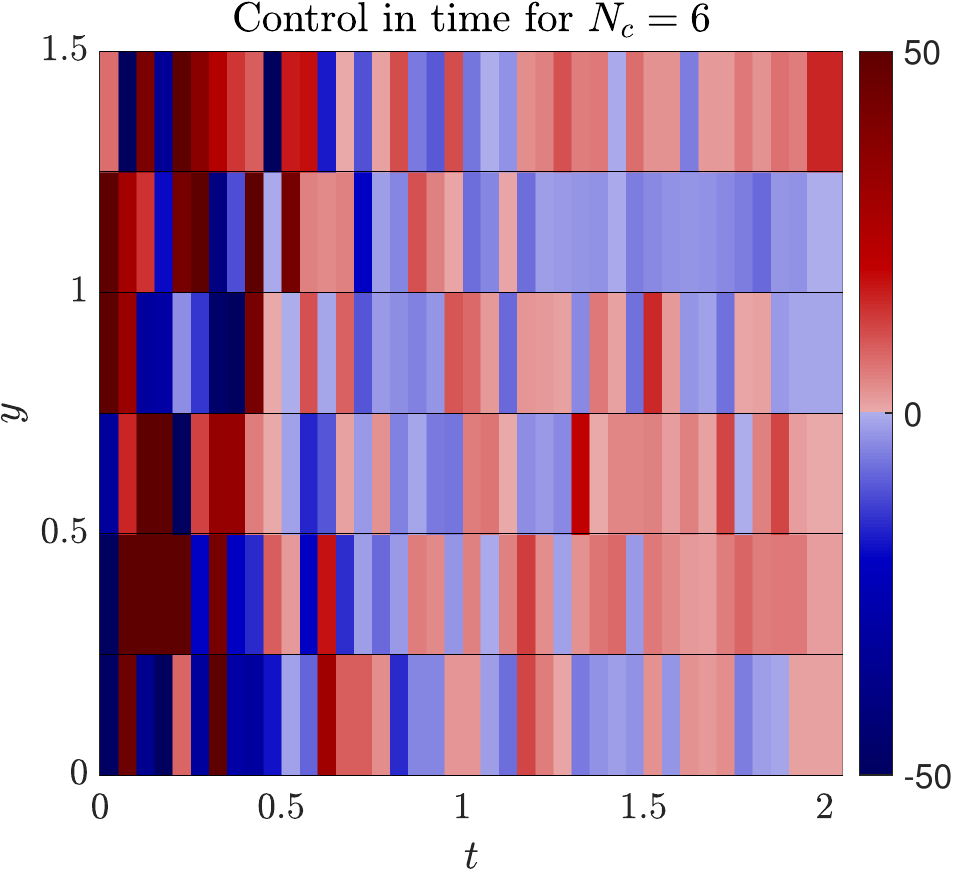}		\includegraphics[width=0.3\linewidth]{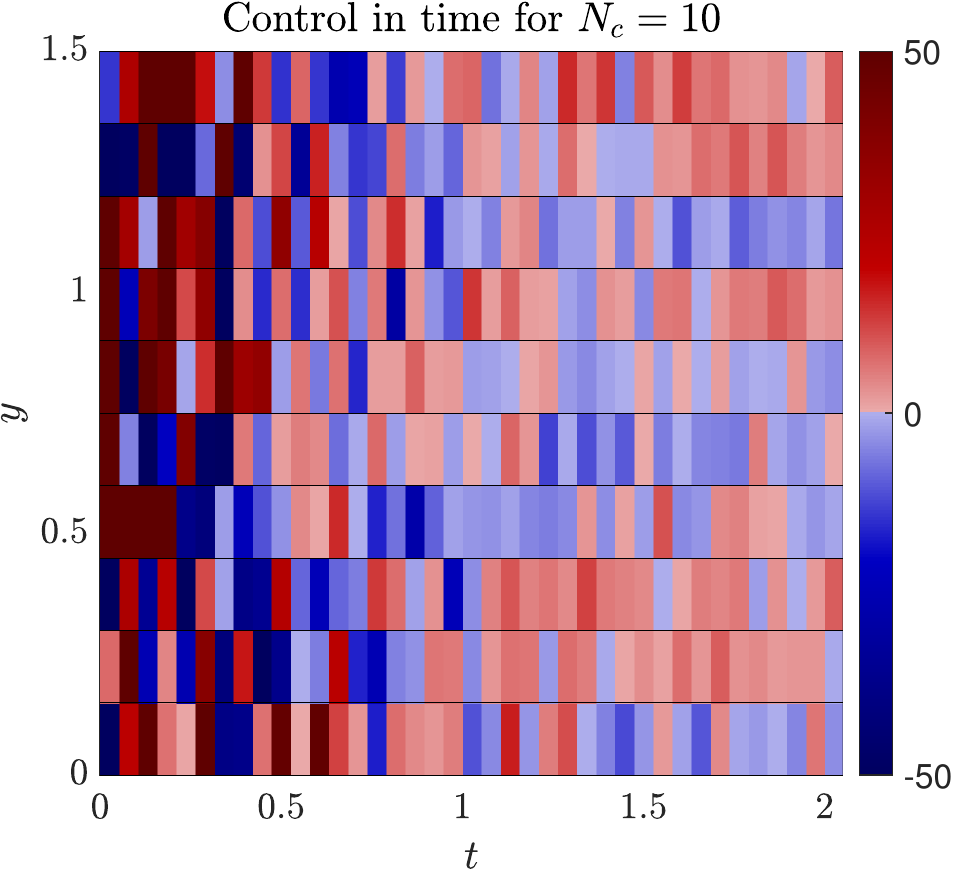}
	\caption{Two dimensional sod shock tube test with control. First row: thermal energy at the boundaries for different values of the initial temperature $T_0$ (left), thermal energy at the boundaries for different values of the number of cells $N_c$ (right). Second row: the value of the control in time as $N_c$ varies. }
	\label{fig:comparison_T0}
\end{figure} \\

Finally, we assess the effectiveness of the control in the case of larger plasma $\beta$ larger. The plasma $\beta$
\begin{equation}\label{eq:plasma_beta}
	\beta \approx \frac{\rho T}{B^2},
\end{equation} 
where $\rho$ and $T$ are the plasma density and temperature, and $B$ is the magnetic field, is a dimensionless parameter that quantifies the ratio between the plasma thermal pressure and the magnetic pressure, measuring how effective the applied magnetic field is at confining the plasma \cite{freidberg2008plasma}. 
A value of $\beta \ll 1$ indicates a magnetically dominated regime, where the plasma pressure is much smaller than 
the magnetic pressure and the magnetic field lines strongly determine the plasma dynamics. Conversely, 
$\beta \gg 1$ corresponds to a plasma-pressure dominated regime, where magnetic confinement becomes ineffective. 
For magnetically confined fusion plasmas, efficient operation requires intermediate values, where the plasma pressure 
is sufficiently high to sustain fusion reactions but still small enough to remain stably confined.
In practice, stability constraints impose an upper bound on $\beta$, \cite{troyon1984mhd}. In particular, the Troyon limit provides an 
empirical criterion for the maximum achievable $\beta$ in a realistic Tokamak, that is $\beta \approx 0.01 \text{--} 0.05$. Figure \ref{fig:energy_M} (left) shows the boundary thermal energy at time $t=2$, computed according to \eqref{eq:energy}, for different values of the initial temperature and magnetic field strength $M$. The results indicate that the control strategy is effective except for cases with low $M$.  
On the right, Figure \ref{fig:energy_M} shows the temporal evolution of the thermal energy for different values of plasma $\beta$. The results indicate that in regimes with relatively large $\beta$, the control strategy becomes less effective, suggesting that further improvements may be necessary.
\begin{figure}[h!]
	\centering
	\includegraphics[width=0.35\linewidth]{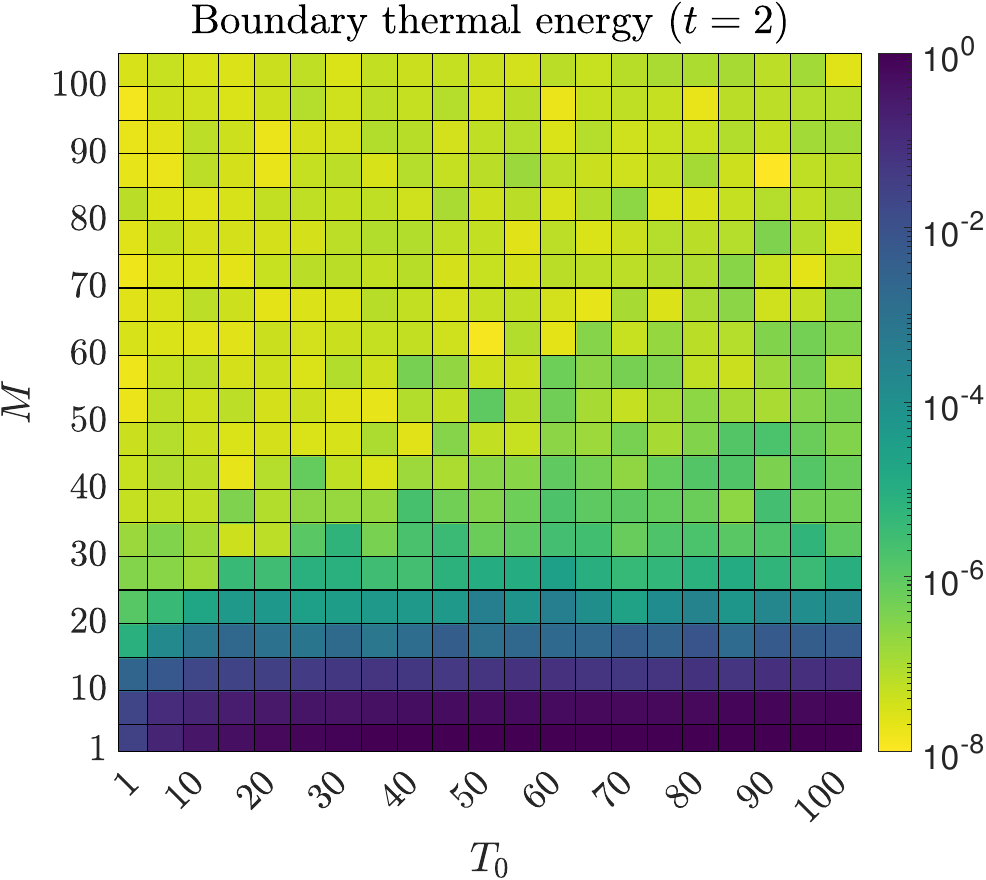}
	\includegraphics[width=0.33\linewidth]{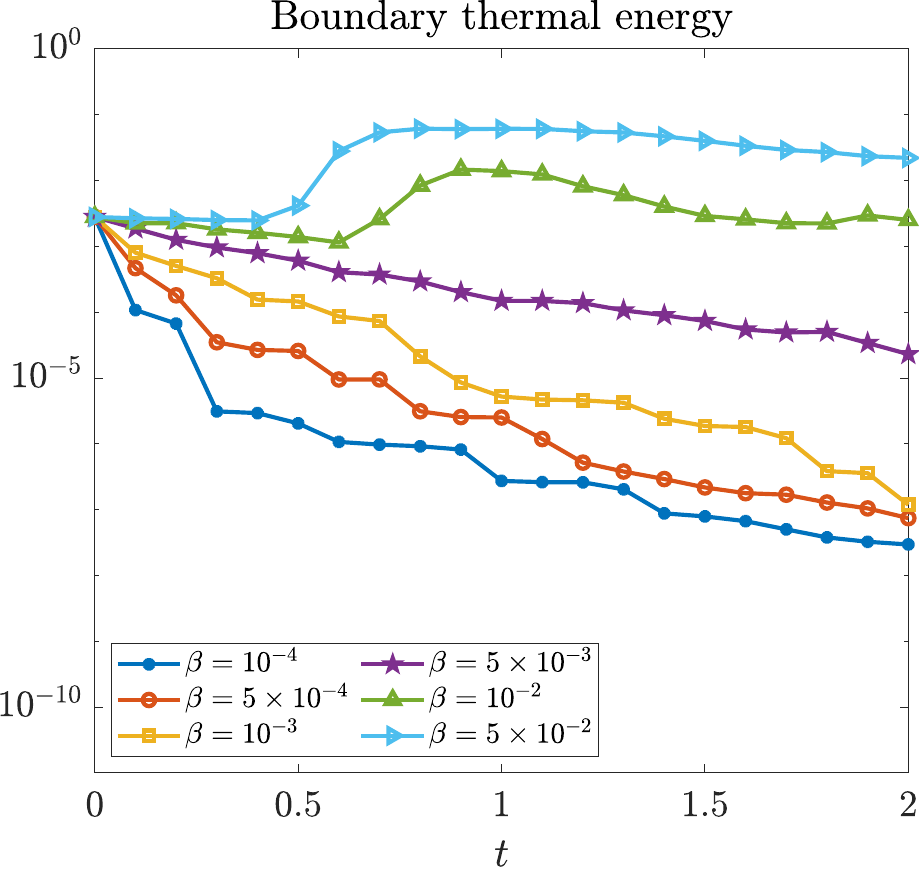}
	\caption{Two dimensional sod shock tube test with control. On the left, the boundary thermal energy at time $t=2$ for different temperature values and control strengths. On the right, the boundary thermal energy in time as the plasma $\beta$ varies. }
	\label{fig:energy_M}
\end{figure}
\subsection{Kelvin-Helmholtz instability}
In this section, we examine a variant of the Kelvin--Helmholtz instability in the context of charged particles; see \cite{crouseilles2010conservative,sonnendrucker1999semi,chacon2016}.  
We conduct an analysis similar to that of the previous section, comparing the controlled and uncontrolled scenarios in the presence of both uncertainty and collisions.  
Periodic boundary conditions are imposed in the $ x $-direction, and reflective boundary conditions are imposed in the $ y $-direction.  
The computational domain is defined as $ x \in [0, 40] $, $ y \in [-5, 5] $, and the initial density is given by
\begin{equation}\label{eq:kelvin_initial_density}
	\rho_0(\xx) = \frac{1.5}{2\pi} \, \text{sech}\left(\frac{y}{0.9}\right) \left(1 + \epsilon_0 \cos(3k_0 x + \epsilon_1 \sin(k_0 x))\right),
\end{equation}
with parameters $ k_0 = 0.15 $, $ \epsilon_0 = 0.1 $, and $ \epsilon_1 = 0.001 $.
The initial distribution function is defined as
\begin{equation}\label{eq:f0_kelvin}
	f_0(\xx,\vv,z) = f_0^+(\xx,\vv,z) \chi(y \geq 0) + f_0^-(\xx,\vv,z) \chi(y < 0),
\end{equation}
where
\begin{equation}\label{eq:fo_kelvin_p_m}
	f_0^\pm(\xx,\vv,z) = \frac{\rho_0(\xx)}{2\pi T_0(\xx,z)} \exp\left( -\frac{(v_x \pm u_x)^2 + v_y^2}{2 T_0(\xx,z)} \right),
\end{equation}
and $ u_x = 1 $ is the mean velocity in the $ x $-direction.  
The initial temperature includes uncertainty and is defined by
\begin{equation}\label{eq:temp_sigma_kelvin}
	T_0(\xx,z) = 0.15 + 0.25 z,
\end{equation}
where $ z \sim p(z) $ follows a uniform distribution over $[0,1]$.

Figure~\ref{fig:ic_kelvin} displays the estimated initial mean distribution and thermal energy, reconstructed over a grid of size $ m_x \times m_y $, with $ m_x = m_y = 128 $.  
Simulations are performed using $ N = 10^7 $ particles, up to final time $ T = 250 $, with time step $ h = 0.5 $.  
In the uncontrolled case, we set $ B = 1.5 $, whereas in the controlled scenario, the control acts on $ N_c = 4 $ horizontal cells.  
The mean density and thermal energy at the boundaries are computed as in equation~\eqref{eq:energy}, with the boundary region defined as $ \Omega_b = [-5, -4.8] \cup [4.8, 5] $.

\subsubsection{Uncontrolled case}

\begin{figure}[h!]
	\centering
	\includegraphics[width=0.35\linewidth]{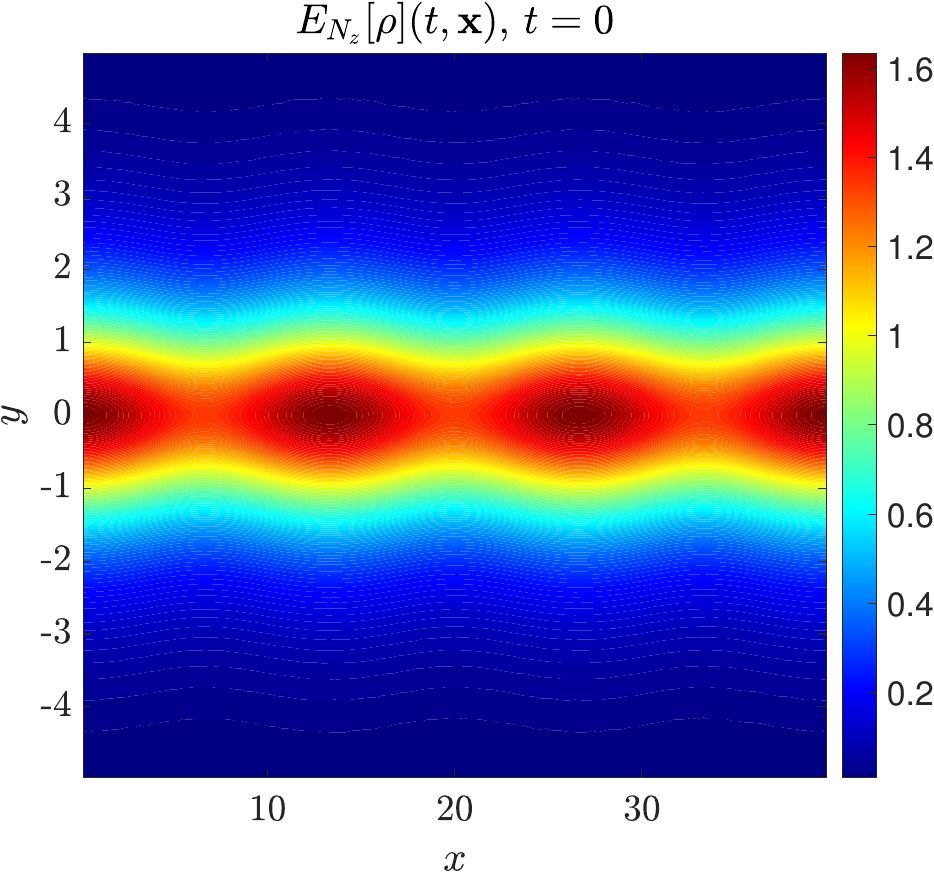} 
	\includegraphics[width=0.35\linewidth]{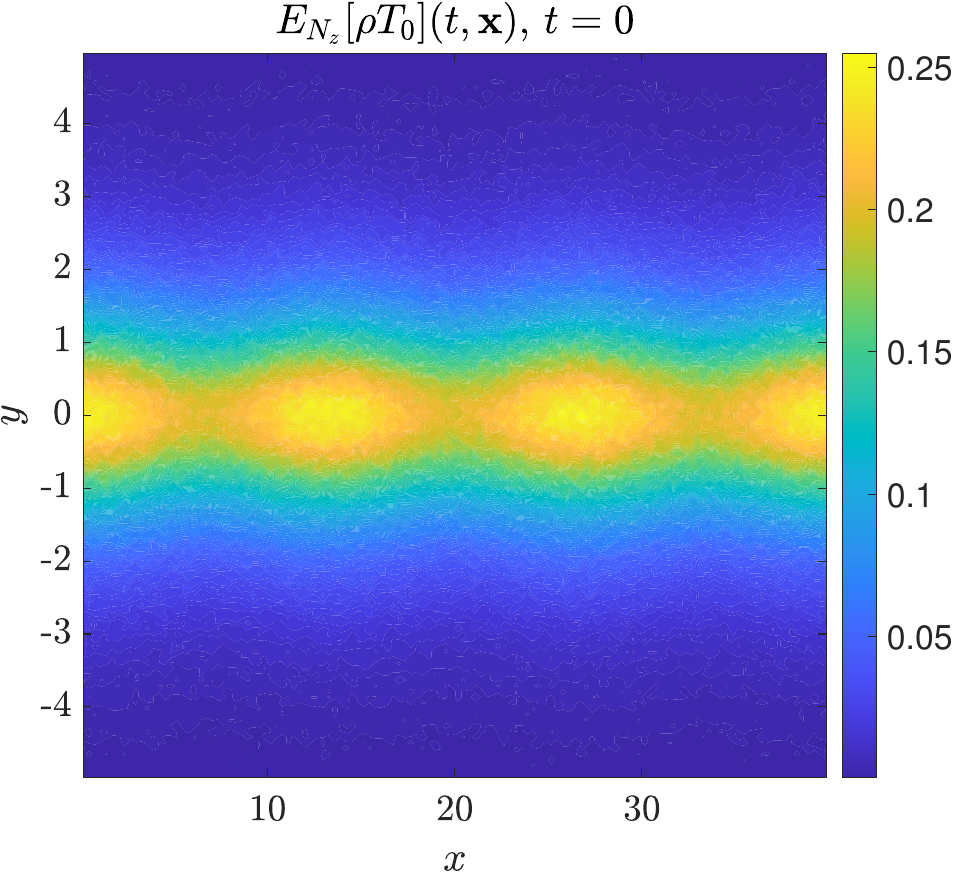} 
	\caption{Kelvin-Helmholtz instability. Initial mean density and thermal energy.}
	\label{fig:ic_kelvin}
\end{figure} 
Figure~\ref{fig:no_control_kelvin} shows the density field at times $ t = 50 $, $ t = 75 $, and $ t = 200 $ for the uncontrolled case with $ B = 1.2 $, across the three collisional regimes. In the fully collisional regime ($ \nu = 1000 $), the instability develops rapidly, whereas in the collisionless regime ($ \nu = 0 $) it emerges more gradually over time. An intermediate behavior is observed for the quasi-collisional case with $ \nu = 10 $.
\begin{figure}[h!]
	\centering
	\includegraphics[width=0.3\linewidth]{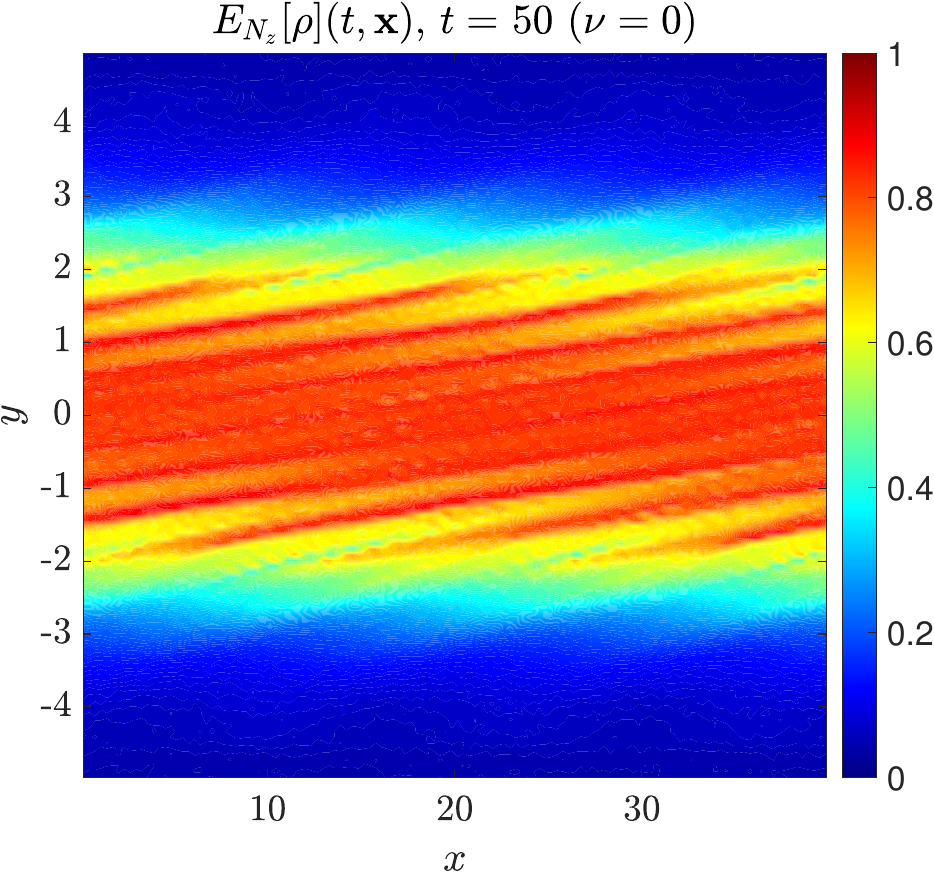} 
	\includegraphics[width=0.3\linewidth]{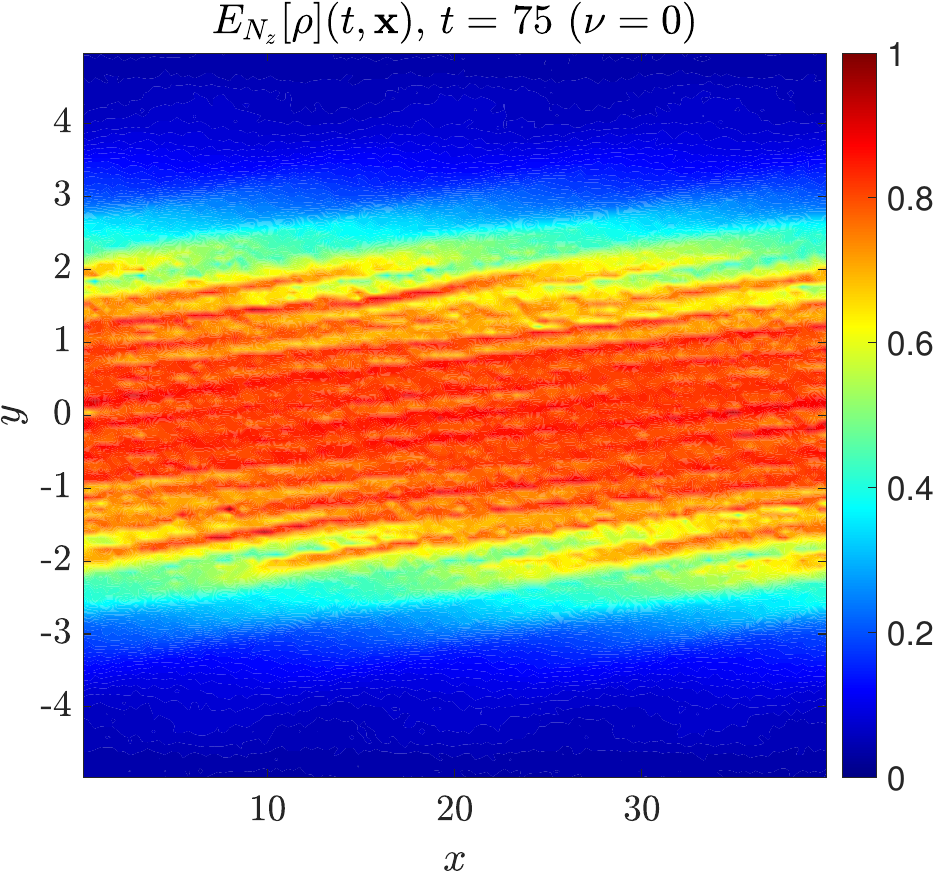} 
	\includegraphics[width=0.3\linewidth]{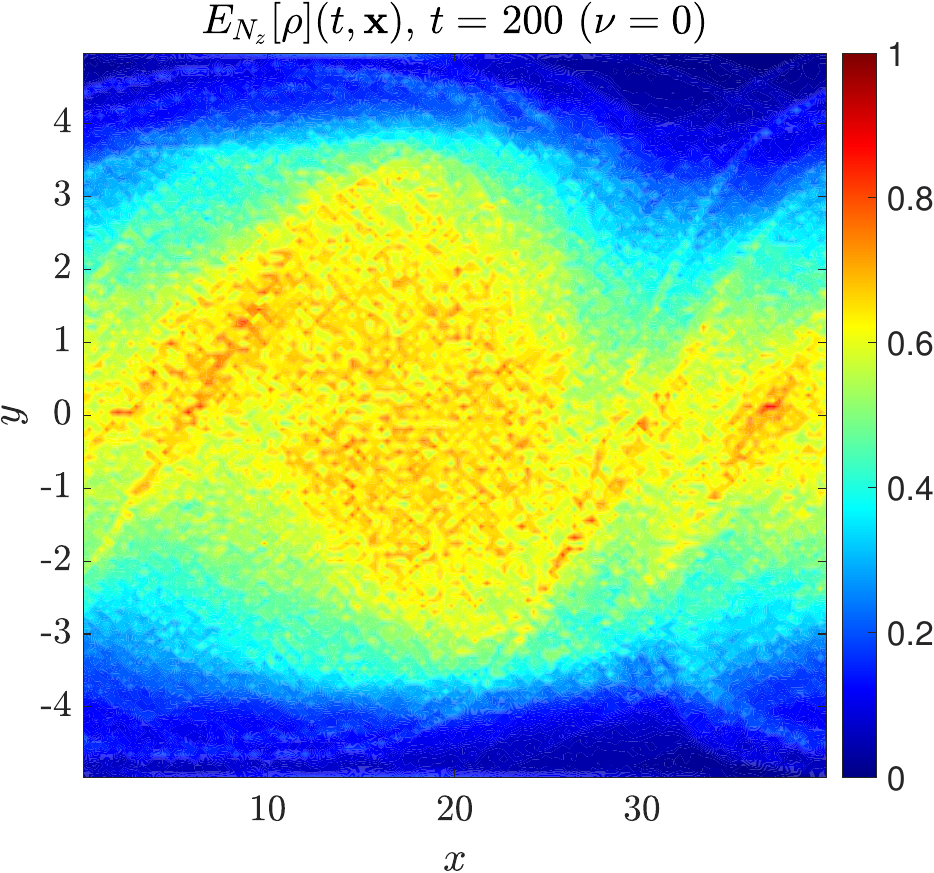}  \\
	\includegraphics[width=0.3\linewidth]{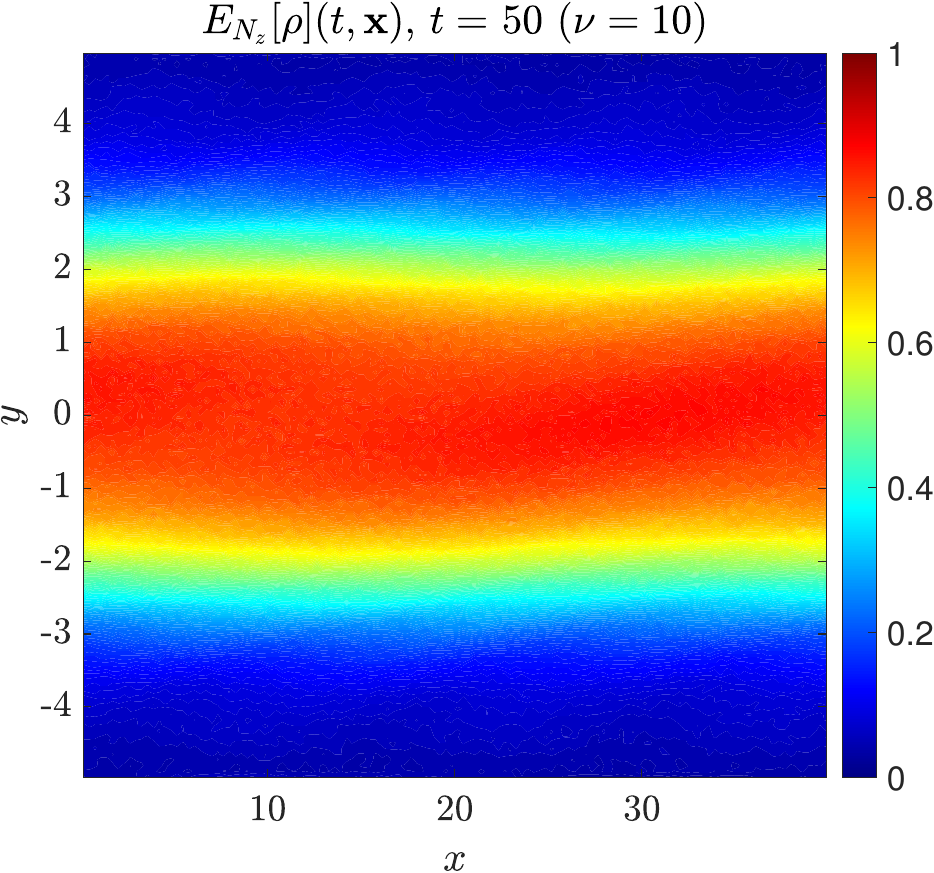} 
	\includegraphics[width=0.3\linewidth]{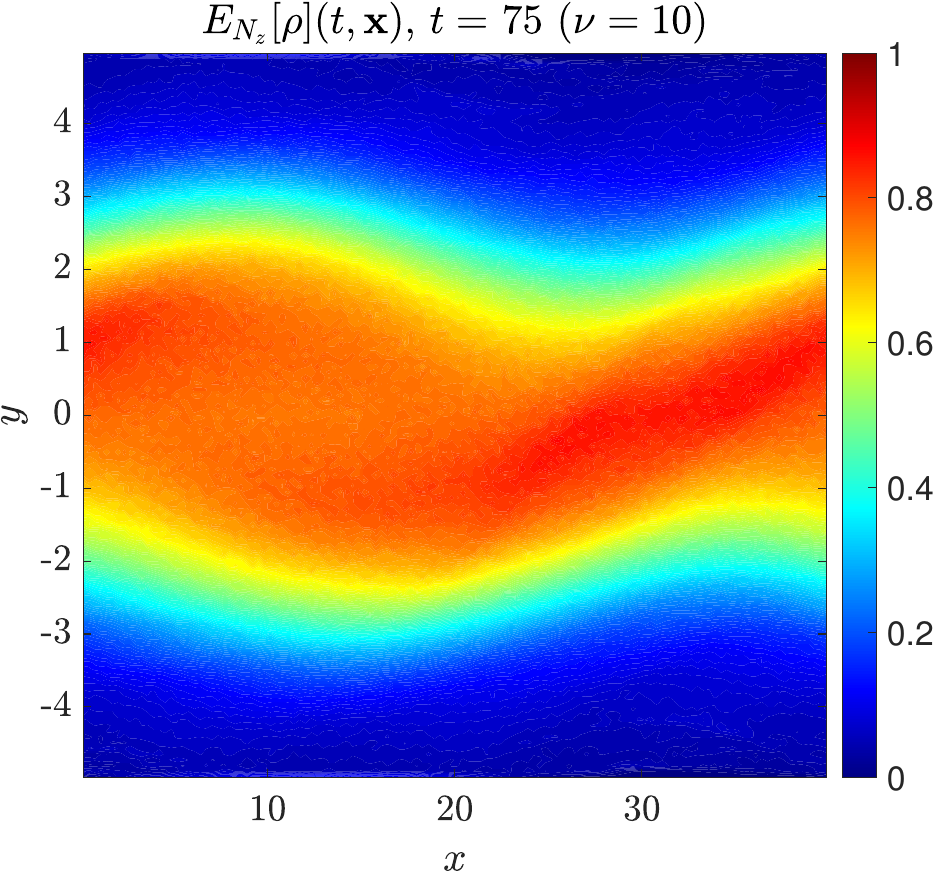} 
	\includegraphics[width=0.3\linewidth]{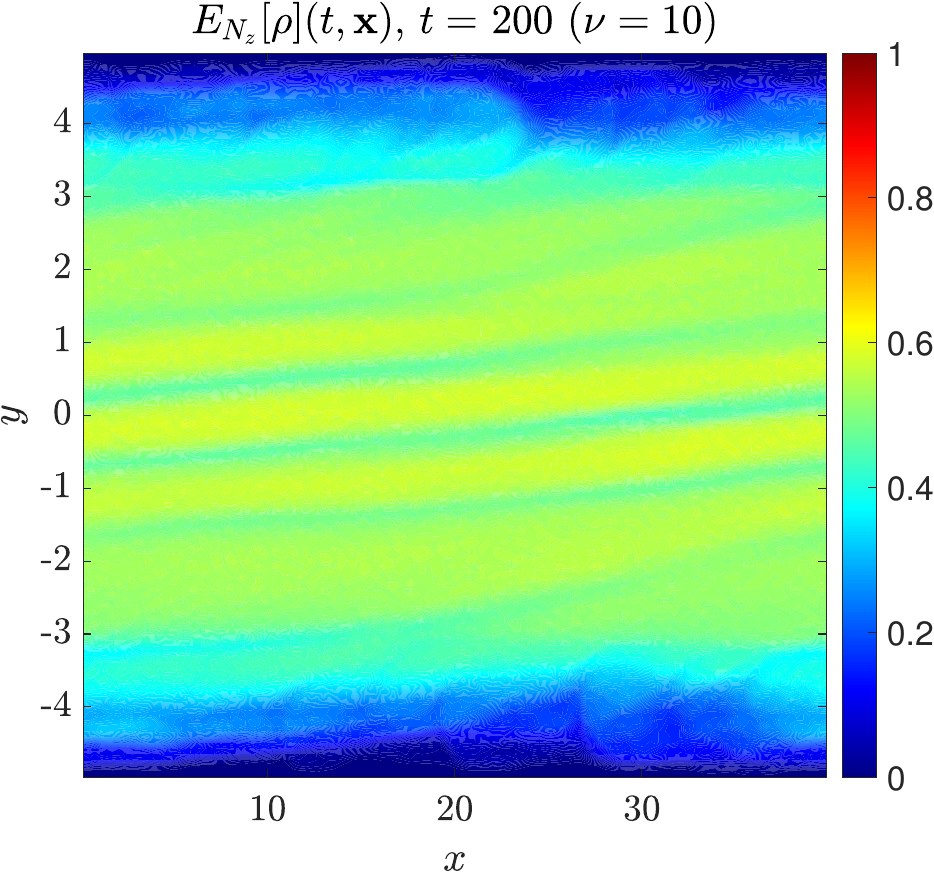}  \\
	\includegraphics[width=0.3\linewidth]{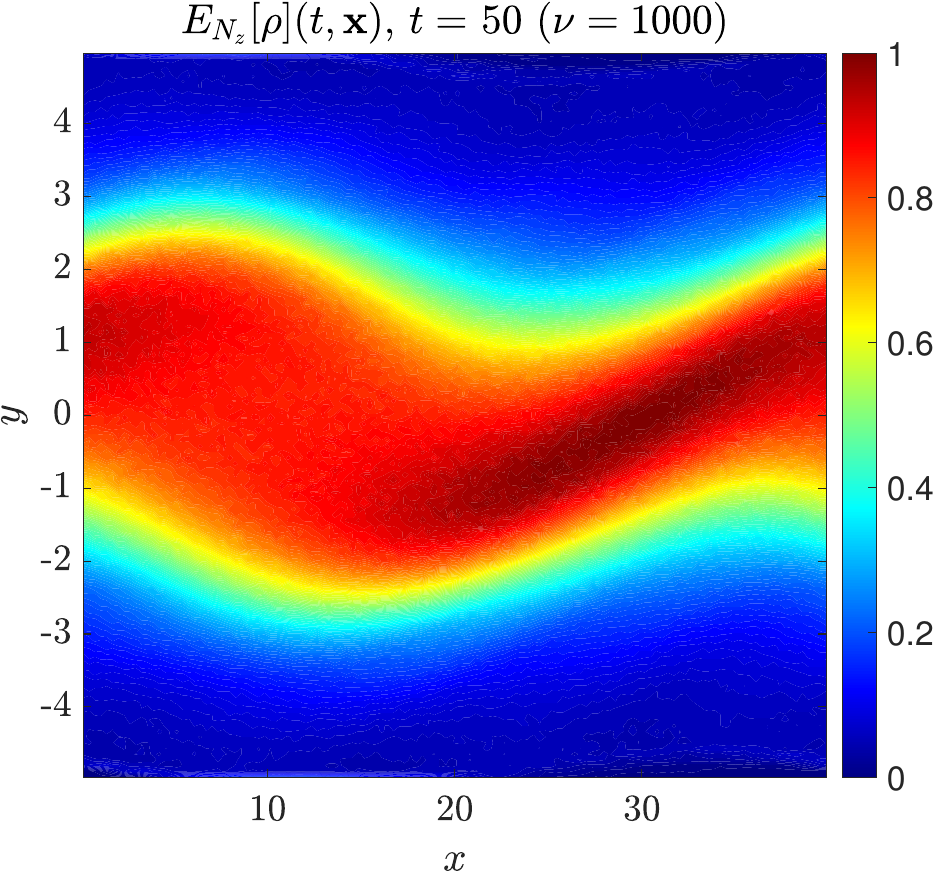} 
	\includegraphics[width=0.3\linewidth]{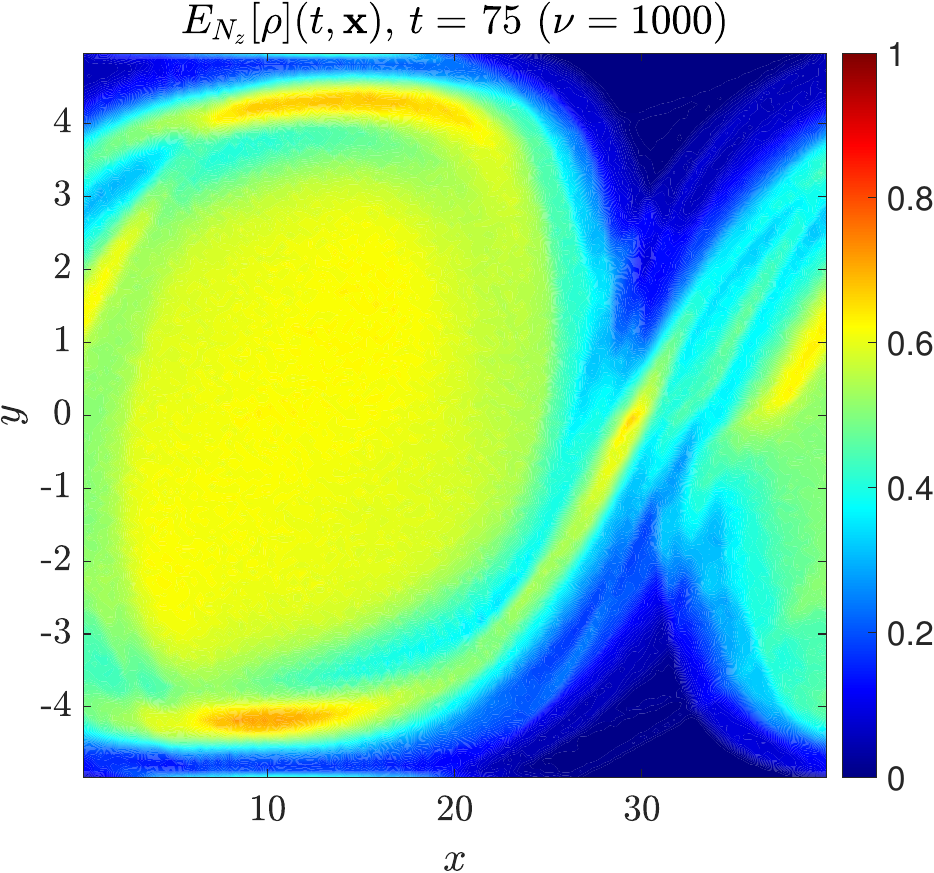} 
	\includegraphics[width=0.3\linewidth]{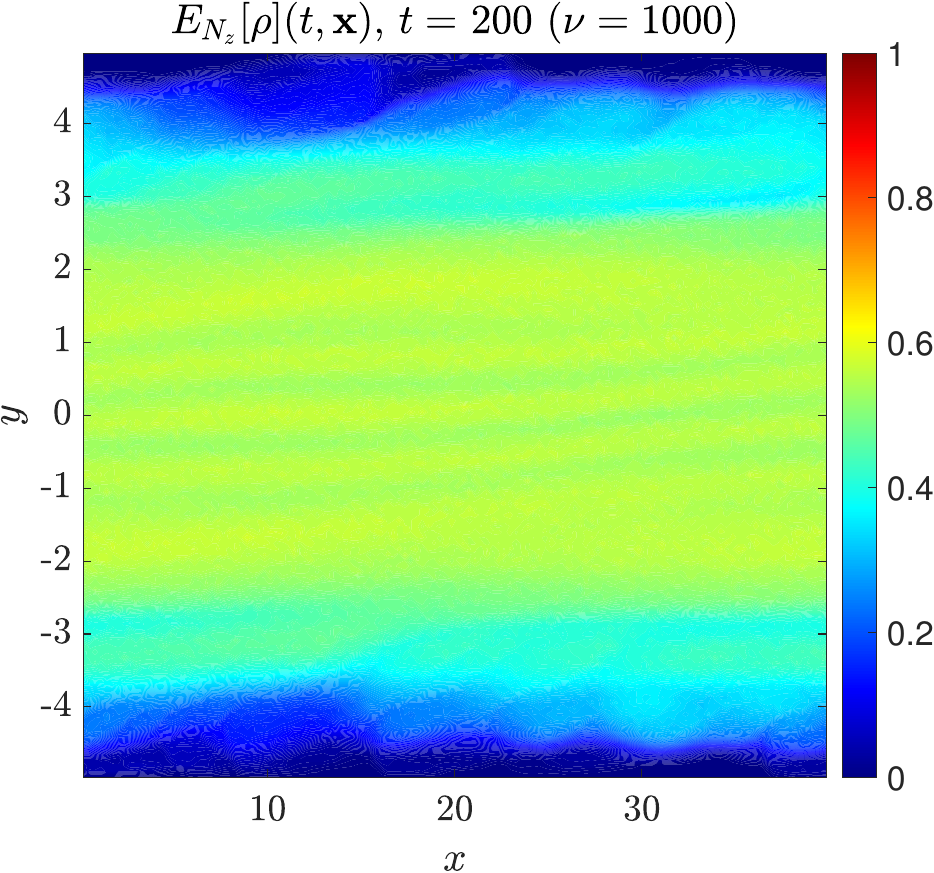}   
	\caption{Kelvin-Helmholtz instability with $B(t,\xx) = 1.2$. Mean density at $t=50$, $t =75$ and $t=200$. First row: $\nu = 0$. Second row: $\nu=10$. Third row: $\nu=1000$.  }
	\label{fig:no_control_kelvin}
\end{figure} 
In Figure \ref{fig:kelvin_energy_noControl} the thermal energy at the boundary in time for $\nu=0$ (on the left), $\nu =10$ (in the centre), and $\nu =1000$ (on the right) is shown. Once that the instability arises, the boundary thermal energy starts to increase. 
\begin{figure}[h!]
	\centering 
	\includegraphics[width=0.3\linewidth]{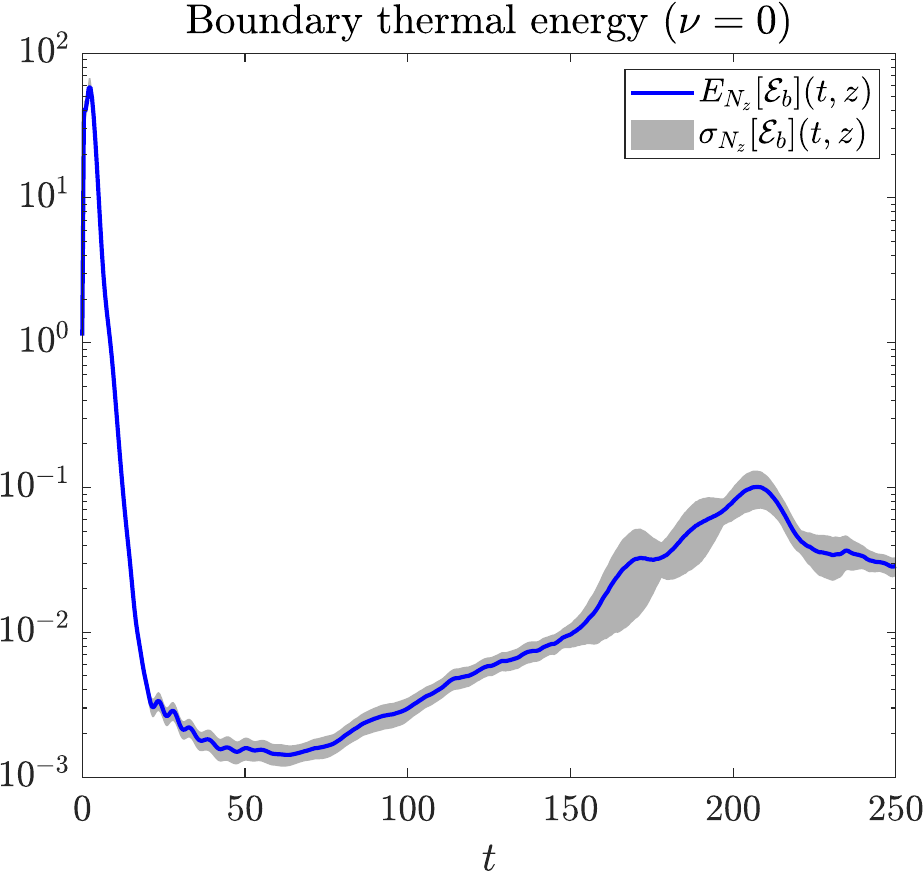} 
	\includegraphics[width=0.3\linewidth]{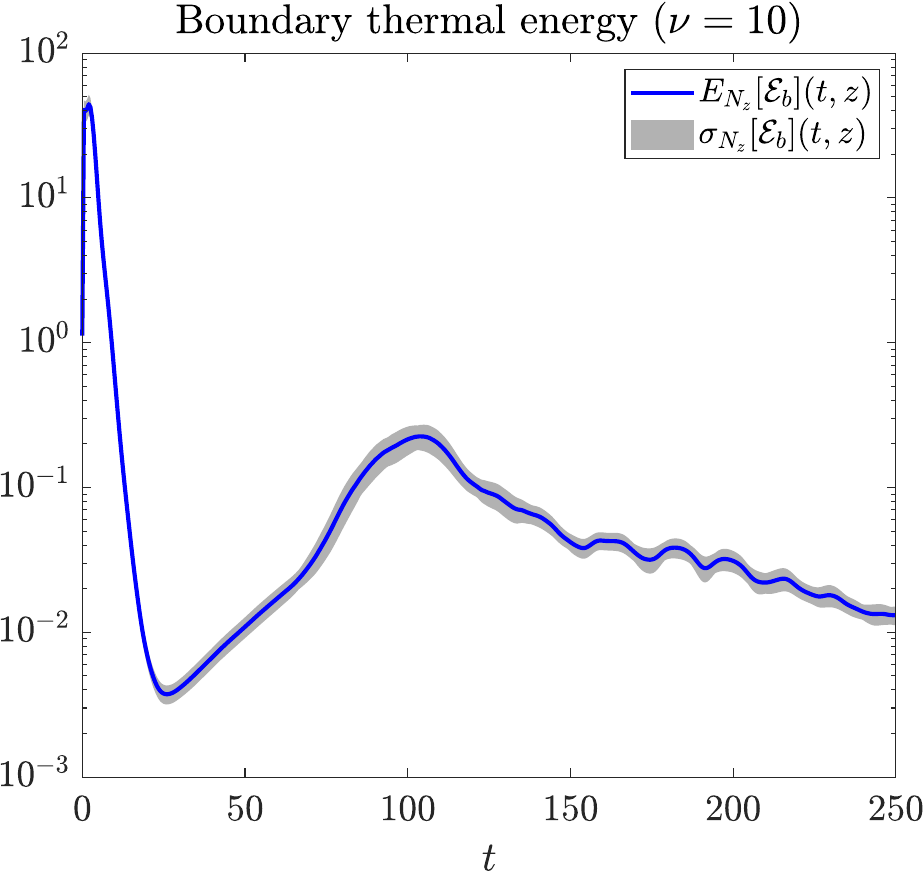} 
	\includegraphics[width=0.3\linewidth]{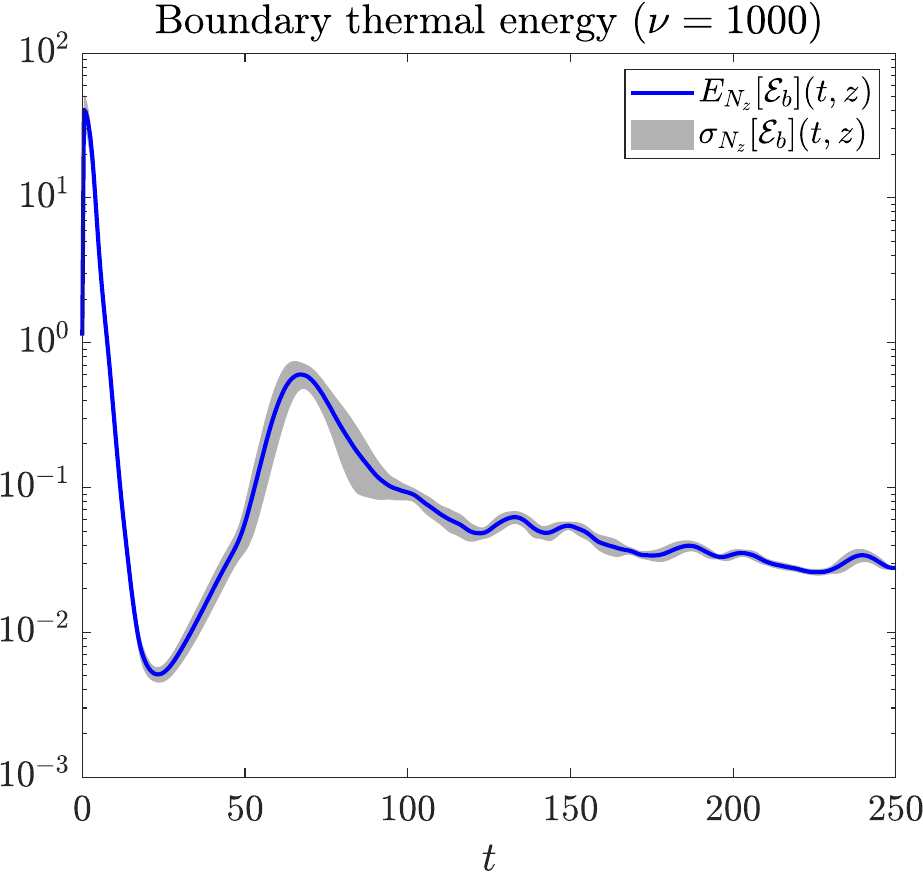}   
	\caption{Kelvin-Helmholtz instability with $B(t,\xx) = 1.2$. Thermal energy at the boundaries for $\nu = 0$ (on the left), $\nu = 10$ (in the centre) and $\nu = 1000$ (on the right). The mean value is depicted in blue, while the standard deviation as a shaded area. }
	\label{fig:kelvin_energy_noControl}
\end{figure}

\subsubsection{Feedback controlled case}
The case of the robust control is shown in Figure \ref{fig:control_kelvin}, for the non collisional regime. We set $\alpha_\textrm{x} = 5$,$ \alpha_\textrm{v} = 15$, $\beta_\textrm{x} =2$,$\beta_\textrm{v}= 12$, and $\gamma = 2.5\times10^{-3}$, $M=100$, with target $\hat{y}=0$, to confine the mass at the center of the domain as for the previous case. In the first row, three snapshots of the dynamics taken at time $t=50$, $t=75$ and $t=200$ are depicted. In the second row the plot of the boundary thermal energy in time (on the left), and on control values in time in the four horizontal cells (on the right) can be observed. The figure demonstrates the effectiveness of the control strategy developed.  Similar results are obtained in the quasi and fully collisional regimes for $\nu =10$, and $\nu=1000$, and are not shown for brevity.
\begin{figure}[h!]
	\centering
	\includegraphics[width=0.3\linewidth]{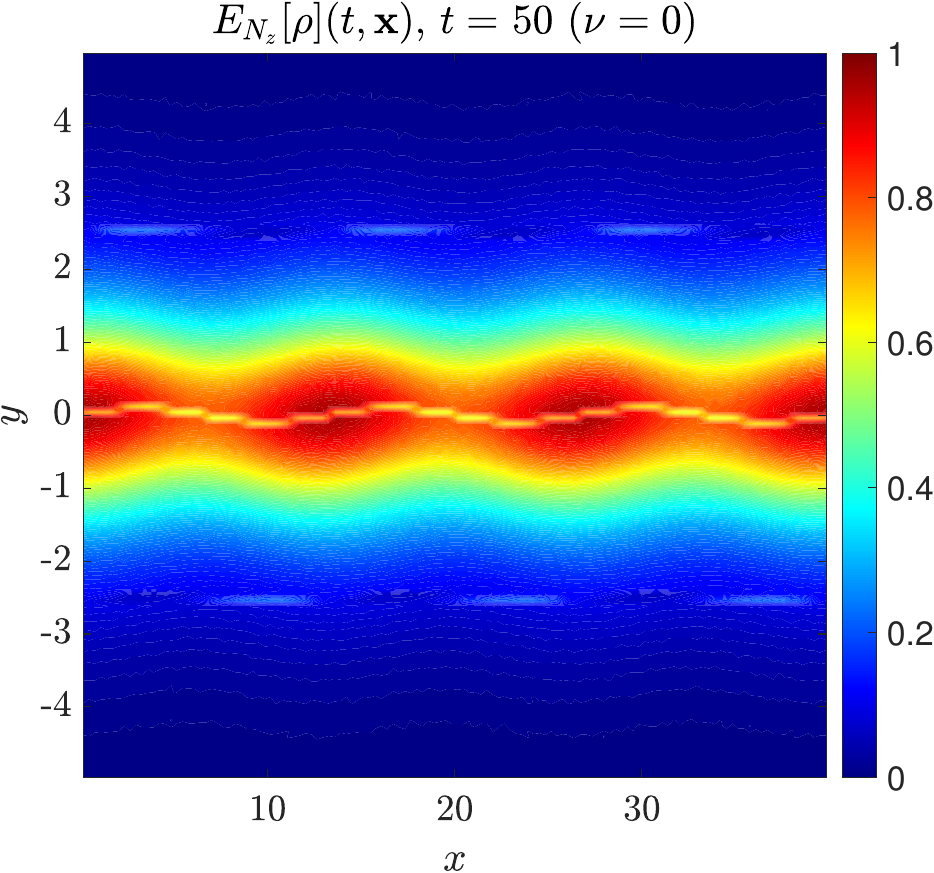} 
	\includegraphics[width=0.3\linewidth]{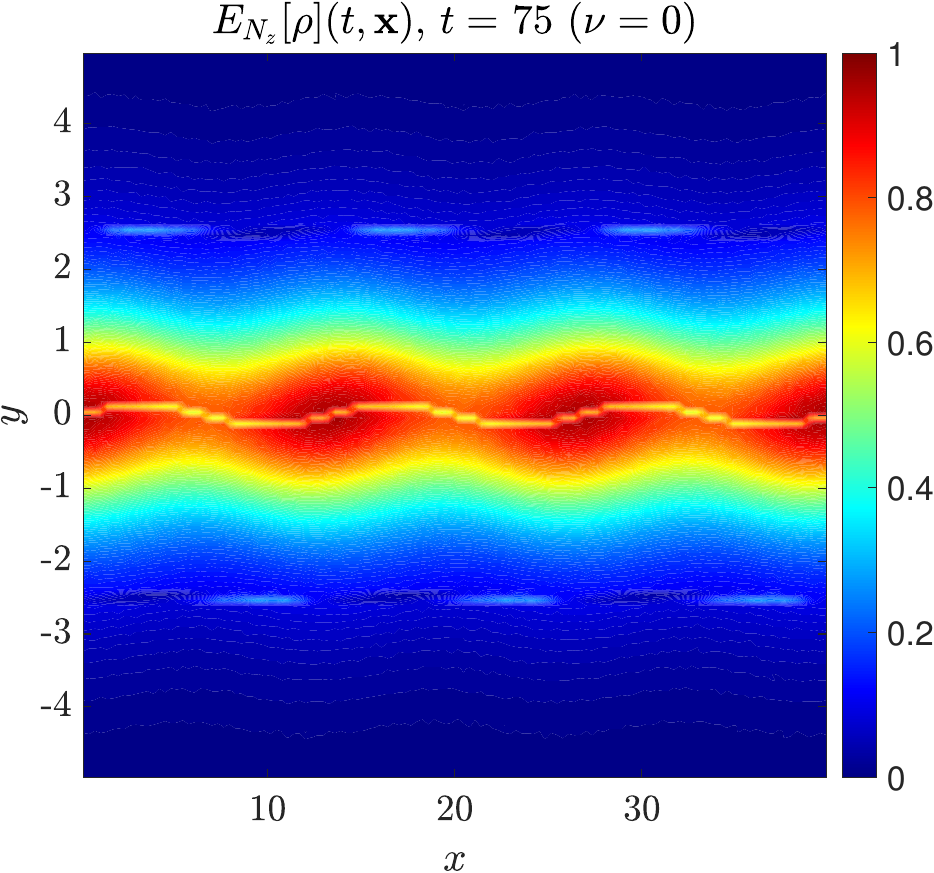} 
	\includegraphics[width=0.3\linewidth]{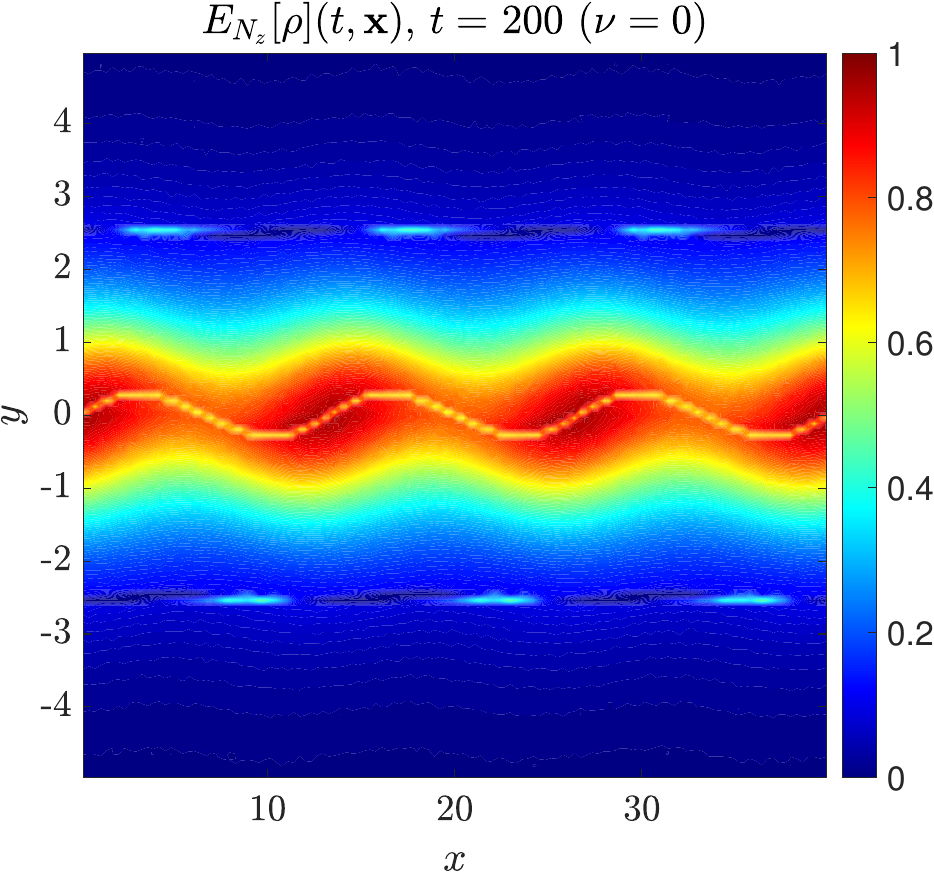}  \\
	\includegraphics[width=0.3\linewidth]{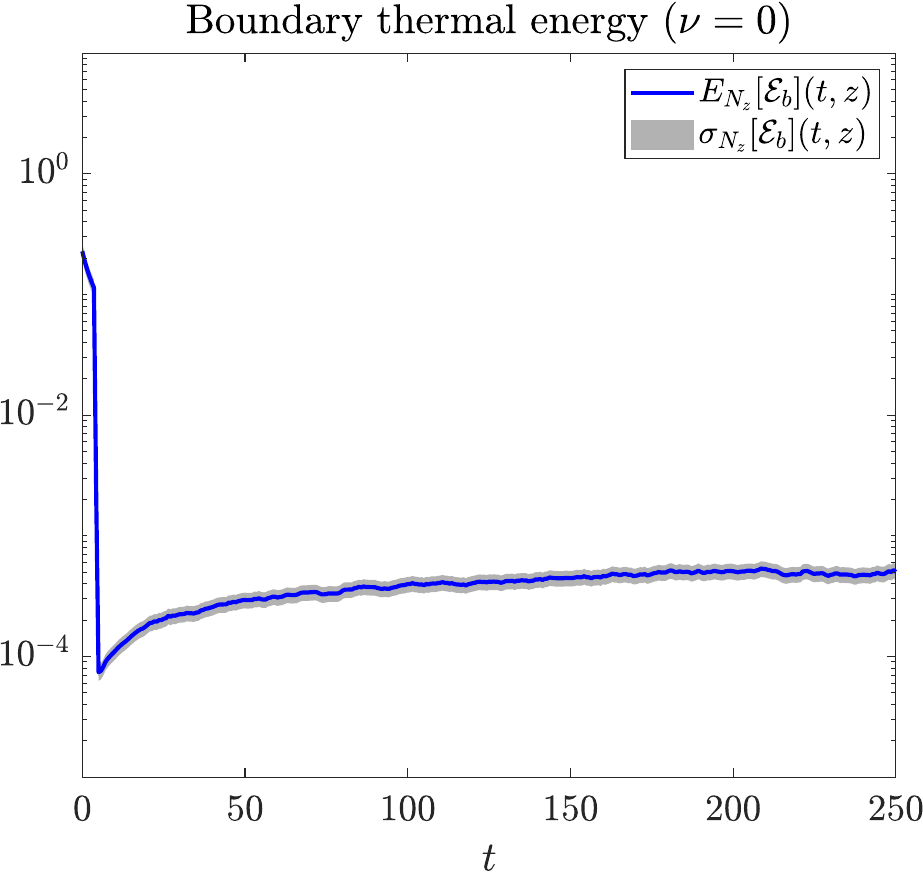} 
	\includegraphics[width=0.3\linewidth]{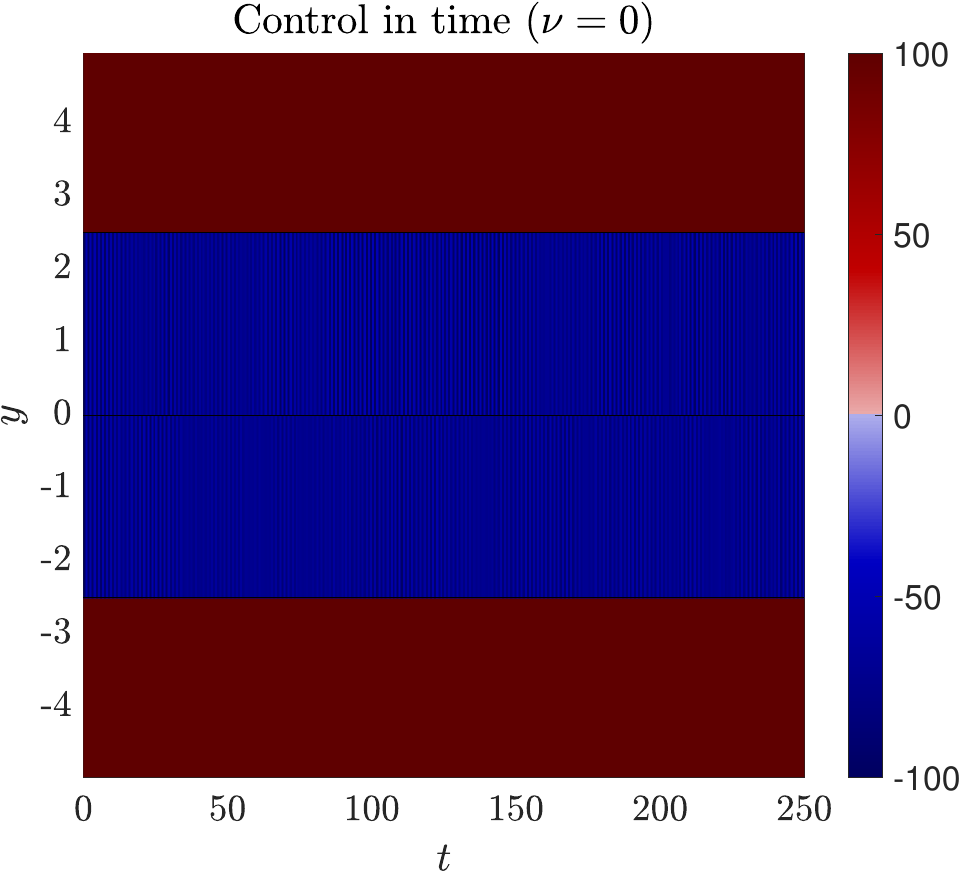} 
	\caption{Kelvin-Helmholtz instability with control for $\nu = 0$. First row: mean density at $t=50$, $t =75$ and $t=200$. Second row: thermal energy at the boundaries (on the left) and control (on the right). The mean thermal energy is depicted in blue, while the standard deviation as a shaded area.   }
	\label{fig:control_kelvin}
\end{figure} 
In this case, the control field is essentially time-independent, taking large values at the boundaries and weaker values at the center of the domain. This behavior closely resembles that of a magnetic mirror, although the configuration—with a strong magnetic field at the edges and a weaker one in the middle—is not prescribed a priori but rather obtained as the solution of the control problem \cite{freidberg2008plasma}. When particles move from regions of low magnetic field to regions of high magnetic field, conservation of the magnetic moment implies that an increase in $B$ produces a corresponding increase in $v_y$. Since the total kinetic energy must also be conserved, this increase in $v_y$ is compensated by a decrease in $v_x$. Eventually, $v_x$ vanishes and the particle reverses its motion, becoming confined along the magnetic field lines, which in our framework correspond to cell $C_k$. Particles with larger initial $v_y$ are reflected sooner, while those with smaller $v_y$ may or may not be reflected depending on the field strength. In the Kelvin–Helmholtz instability, mass tends to accumulate near the boundaries of the internal fictitious cells, while some particles are driven toward the outer boundary cells. This behavior arises from the initial particle distribution, which is characterized by small values of $v_y$. As a consequence, a slight increase in thermal energy appears at the boundaries, reaching values on the order of $10^{-4}$.

In contrast, in the two-dimensional Sod test, particles start with a larger $v_y$ component and are therefore reflected. In the short-time regime, the control exhibits oscillations aimed at steering the particles toward the desired configuration at the center of the domain. Over longer times, however, the control progressively reduces the $v_y$ component of the particle velocities. As a result, the control field converges to a stationary profile, as illustrated in Figure~\ref{fig:mirror_sod2D}, characterized by stronger intensity at the boundaries and weaker intensity at the center, thereby reproducing the magnetic mirror effect in this setting as well.
\begin{figure}[h!]
	\centering
	\includegraphics[width=0.35\linewidth]{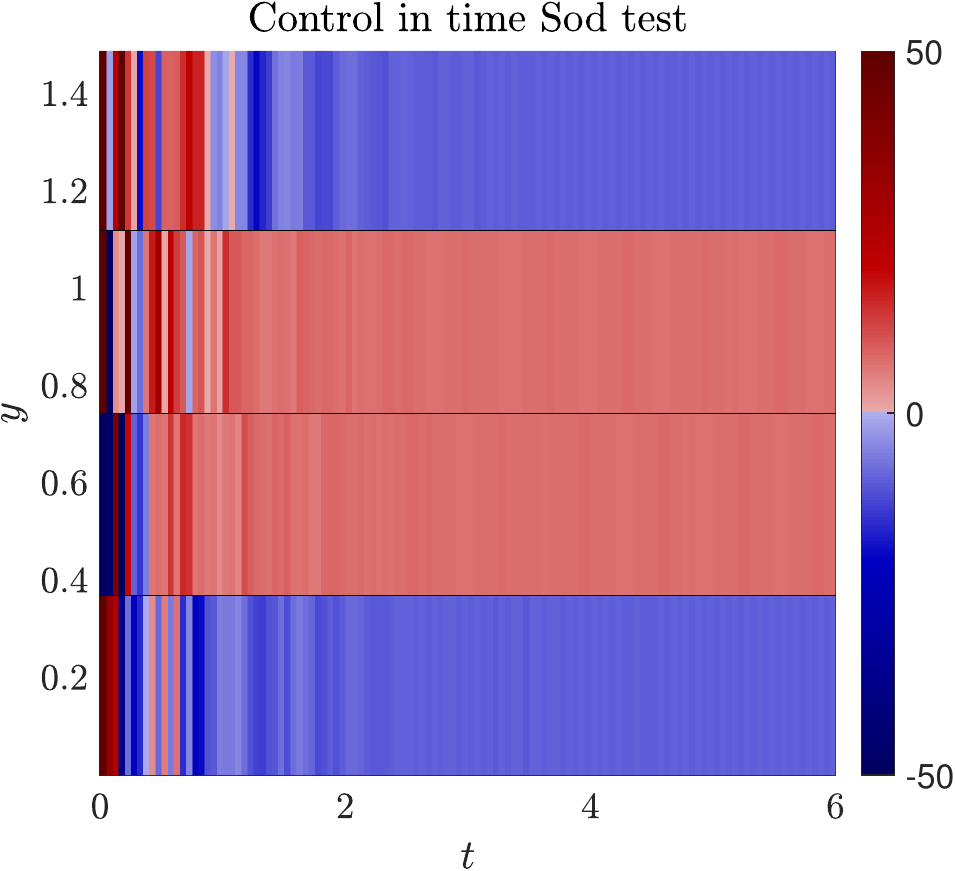} 
	\caption{Two-dimensional Sod test: magnetic mirror effect. Value of the control in time. }
	\label{fig:mirror_sod2D}
\end{figure}

\section{Conclusions} \label{sec:conclusion}
In this work, we have proposed a new control strategy for the collisional Vlasov–Poisson–BGK system under uncertainty.
The central idea is to develop an efficient instantaneous feedback control framework that steers the plasma toward a desired configuration through the application of an external magnetic field, which is constructed to be independent of the underlying randomness in the system.

To address the resulting optimization problem, we employed a semi-implicit Particle-In-Cell discretization for the Vlasov–Poisson system, combined with Monte Carlo sampling for the BGK collision process and a stochastic Gauss–Legendre quadrature method to represent uncertainty.

The control problem is formulated over a single time step, leading to an instantaneous feedback law derived via a simplified time integrator. The corresponding optimality system is solved through an augmented Lagrangian approach, ensuring enforcement of control constraints. The resulting feedback control is then integrated into the semi-implicit dynamics by considering the continuous-time limit as the time step vanishes.

Numerical experiments validate the effectiveness of the proposed control strategy across various collisional regimes, demonstrating its ability to confine the plasma and prevent boundary interactions.
As directions for future work, we plan to explore alternative uncertainty quantification strategies, including multi-fidelity control variates, which aim to reduce computational cost by coupling low- and high-fidelity models. We also intend to tackle the additional complexity arising from the full Maxwell–Vlasov–BGK system, and to investigate the incorporation of more realistic collision operators, such as the Landau operator, within the proposed control framework.
\section*{Acknowledgments}
This work has been written within the activities of GNCS and GNFM groups of INdAM (Italian National Institute of High Mathematics).
GA has been partially supported by MUR-PRIN Project 2022 No. 2022N9BM3N   ``Efficient numerical schemes and optimal control methods for time-dependent partial differential equations" financed by the European Union - Next Generation EU.
GA and GD thank the European Union — NextGenerationEU, MUR–PRIN 2022 through the PNRR Project No. P2022JC95T “Data-driven discovery and control of multi-scale interacting artificial agent systems”. GD and FF thank the Italian Ministry of University and Research (MUR) through the PRIN 2020 project (No. 2020JLWP23) ``Integrated Mathematical Approaches to Socio–Epidemiological Dynamics”. LP has been partially funded by the European Union– NextGenerationEU under the program “Future Artificial Intelligence– FAIR” (code PE0000013), MUR PNRR, Project “Advanced MATHematical methods for Artificial Intelligence– MATH4AI”.  LP acknowledges the support by the Royal Society under the Wolfson Fellowship ``Uncertainty quantification, data-driven simulations and learning of multiscale complex systems governed by PDEs" and by MIUR-PRIN 2022 Project (No. 2022KKJP4X), ``Advanced numerical methods for time dependent parametric partial differential equations with applications". The partial support by ICSC -- Centro Nazionale di Ricerca in High Performance Computing, Big Data and Quantum Computing, funded by European Union -- NextGenerationEU is also acknowledged. 
%\section*{Declaration of interests}
%The authors declare that they have no known competing financial interests or personal relationships that could have appeared to influence the work reported in this paper.

\appendix
\section{Comparison of different robust control strategies}\label{app:old_control}
In this Appendix, we first extend the control strategy introduced in \cite{albi2024instantaneous} to the setting with uncertainty, and then compare it with the approach proposed in this work. Unlike the continuous control problem formulated in equation~\eqref{eq:control_pb_continuos}, where the control is computed for each particle and subsequently interpolated to obtain an average magnetic field, the strategy described in \cite{albi2024instantaneous} directly computes a piecewise constant control (or magnetic field) within each fictitious cell $ C_k $.

We briefly recall here the derivation of the instantaneous control strategy introduced in \cite{albi2024instantaneous}, extending it to account for uncertainty while, for simplicity, we assume a collisionless setting. We first formulate the problem at the continuous level and over a finite time horizon $[0,t_f]$ as follows
\begin{equation}\label{eq:continuos_pb}
	\min_{B\in \mathcal{B}_{adm}}  \sum_{k=1}^{N_c} \mathcal{J}_k(B_k^{ext};f_k,f^0_k),\qquad 
	\textrm{s.t.}~\eqref{eq:Vlasov}-\eqref{eq:Poisson},
\end{equation}
where  $f_k= f_k (t,\xx,\vv,\zz)$ corresponds to the normalized particle density restricted to a single cell $C_k$
\[
f_k (t,\xx,\vv,\zz) = \frac{f(t,\xx,\vv,\zz)}{\rho_k(t)},\qquad \rho_k(t) = \int_{\Omega_k}f(t,\xx,\vv,\zz)\,d\xx, d\vv, 
\]
with $\rho_k(t)>0$ the total cell density and with $B=(B_1,\ldots,B_{N_c})$ now representing the vector of $z$ components of $\BB(t,\xx)$   within each cell $C_k$, $\mathcal{B}_{adm}$ the set of admissible controls such that
$\mathcal{B}_{adm} = \{B_k^{ext} | B_k^{ext}\in[-M,M], M>0,\, k = 1,\ldots, N_c\}$, and where, for each $k = 1,\ldots, N_c$, the cost functional is defined as follows
\begin{equation}\label{eq:J}
	\begin{split}
		\mathcal{J}_k(B_k; f_k,f_k^0) = & \int_{0}^{t_f} \left( \mathcal{P}\left[ \mathcal{D}(f_k)(t,\zz) \right]   + \right. \\ & + \left. \frac{\gamma}{2} \mathcal{P}\left[ \int_{\Omega_k} | B_k(t)|^2 f_k(t,\xx,\vv,\zz) d\xx d\vv\right] \right)\, dt,
	\end{split}
\end{equation}
where $\gamma>0$ is a penalization
term, $\mathcal{P}[\cdot]$ is a suitable statistical operator taking into account the presence
of the uncertainties,  $\mathcal{D}(\cdot)$ aims at enforcing a specific configuration in the distribution function, and $\Omega_k = C_k \times \Omega_v$, being $\Omega_v$ the velocity domain.  
We consider a short time horizon of
length $h > 0$ and formulate a time discretize optimal control problem through the
functional $\mathcal{J}_k$ restricted to the interval $[t, t + h]$, as follows
\begin{equation}\label{eq:min_prob_cell}
	\min_{B_k\in \mathcal{B}_{adm}} \mathcal{J}^{N,h}_{k}(B_k; f_k^N,f_k^{N,0}),
\end{equation}
subject to a semi-implicit in time discretized Vlasov dynamics, fully explicit for the velocity terms
\begin{equation}\label{eq:explicit_dinamics_nu0}
	\begin{split}
		&x_m^{n+1}(\zz) = x_m^n(\zz) + h v_{x_m}^{n+1}(\zz),\\
		&y_m^{n+1}(\zz,\xi) = y_m^n(\zz) + h v_{y_m}^{n+1}(\zz),\\
		&v_{x_m}^{n+1}(\zz) = v_{x_m}^n(\zz)+h v_{y_m}^n(\zz)B_m^{n+1} + h E_{x_m}^n(\zz),\\
		&v_{y_m}^{n+1}(\zz) = v_{y_m}^n(\zz) - h v_{x_m}^n(\zz)B_m^{n+1} + h E_{y_m}^n(\zz).
	\end{split}
\end{equation}
Using the rectangle rule for approximating the integral in time, and under the assumption that the magnetic field is independent of $\zz$, the functional in \eqref{eq:min_prob_cell} reads as follows
\begin{equation}\label{eq:discr_J_1}
	\mathcal J_k^{N,h}(B_k;f_k^N,f_k^{N,0}) =h\left(\mathcal{P}[\mathcal{D}(f_k^N)(t^{n+1},\zz)]+ \frac{\gamma}{2} |B_k|^2 \right).
\end{equation}
Here we assume 
\begin{equation}
	\mathcal{D}(f_k^N)(t,\zz) = \sum_{\ell\in{\{\textrm{x},\textrm{v}\}}}	\mathcal{D}_k(f_k^N,\phi_{\ell})(t^{n+1},\zz)
\end{equation}
with  $\mathcal{D}_k(f_k^N,\phi_{\ell})(\cdot,\zz)$ as in \eqref{eq:D}, where we replace the full domain $\Omega$ with $\Omega_k$, for any $k=1,\ldots,N_c$. 
%\begin{equation}\label{eq:function_D_ell_discr}
%	\begin{split}
%		\mathcal{D}_k(f_k^N,\psi_\ell) = \frac{\alpha_\ell}2\| m_k(f_k^N,\psi_\ell) (t^{n+1},\zz) - \hat \psi_{\ell,k}\|^2 +  \frac{\beta_\ell}2\sigma^2_k(f_k^N,\psi_\ell) (t^{n+1},\zz),	\qquad \ell = \{\textrm{x},\textrm{v}\},
%	\end{split}
%\end{equation}
%with $\psi_{\ell} = \psi_{\ell}(\xx,\vv,\zz)$ a function of the state variables, $ \hat \psi_{\ell,k} \equiv \hat\psi_{\ell,k}(\xx ,\vv) $ the target state, $\alpha_\ell, \beta_\ell \geq 0$, and the following moment functions 
%\begin{equation}\label{eq:mean_var}
%	\begin{split}
%		{m}_{k}(f_k^N,\psi_\ell)(t,\zz)&=\int_{\Omega_k} \psi_{\ell}(\xx,\vv,\zz)f_k^N(t,\xx,\vv,\zz) d\xx d\vv,\cr
%		\sigma^2_{k}(f_k^N,\psi_\ell)(t,\zz) &=  \int_{\Omega_k}\| \psi_{\ell}(\xx,\vv,\zz) - {m}_{k}(f_k^N,\psi_\ell)(t,\zz)\|^2 f_k^N(t,\xx,\vv,\zz) d\xx d\vv,
%	\end{split}
%\end{equation}
%where $f_k^N$ represents the empirical density.
Thus, by setting $\phi_\textrm{x} = y^{n+1}$, $\phi_{\textrm{v}} = v_y^{n+1}$, $\hat{\phi}_{\textrm{x},k} = \hat{y}_k$ and $\hat{\phi}_{\textrm{v},k} = \hat{v}_{y_k}$, target states,  and by direct computation over the empirical densities, we can rewrite the functional in \eqref{eq:discr_J_1} as
\begin{equation}\label{eq:discr_J}
	\begin{split}
		& \mathcal J_k^{N,h}(B_k)= \mathcal{P}\left[  \frac{h\alpha_\emph{v}}{2}  \vert \bar{v}_{y,k}^{n+1}(\zz) -\hat{v}_{y_k}\vert^2 + \frac{h\beta_\emph{v}}{2 N_k} \sum_{i\in C_k} \vert v_{y_i}^{n+1}(\zz)-\bar{v}_{y,k}^{n}(\zz) \vert^2 + \right.\\
		&\left.  + \frac{h\alpha_\emph{x}}{2}  \vert \bar{y}_{k}^{n+1}(\zz) -\hat{y}_k\vert^2 + \frac{h\beta_\emph{x}}{2 N_k} \sum_{i\in C_k} \vert y_{i}^{n+1}(\zz)-\bar{y}_k^{n}(\zz) \vert^2+  \frac{h\gamma}{2}  \vert B^{n+1}_k \vert^2 \right] ,
	\end{split}
\end{equation}
with 
\begin{equation}\label{eq:mean_quantities}
	\begin{split} 	
		\bar{y}_{k}(\zz)  = \frac{1}{N_{k}} \sum_{j\in C_k} y_{j}(\zz) ,\qquad
		\bar{v}_{y,k}(\zz)  = \frac{1}{N_{k}} \sum_{j\in C_k} v_{y_j}(\zz),
	\end{split}
\end{equation}
denoting the mean position and velocity over cell $C_k$, $k=1,\ldots,N_c$.
We extend now the result proved in \cite{albi2024instantaneous} in the case of uncertainty.
\begin{proposition}
	Assume the parameters to scale as 
	\begin{equation}\label{eq:scaling}
		\alpha_\emph{x} \rightarrow \frac{\alpha_\emph{x}}{h}, \qquad \beta_\emph{x} \rightarrow \frac{\beta_\emph{x}}{h}, \qquad \gamma \rightarrow \gamma h,  
	\end{equation}
	then the feedback control at cell $C_k$ associated to \eqref{eq:discr_J} reads as follows
	\begin{equation}\label{eq:L2_control_space_velocity}
		B_k = \mathbb{P}_{[-M,M]}\left( \frac{\mathcal{P}[\mathcal{R}^{N,n}_{\emph{v},k}(\zz)  + \mathcal{R}^{N,n}_{\emph{x},k}(\zz)] }{\gamma +\mathcal{P}[\mathcal{S}^{N,n}_{\emph{v},k}(\zz) + \mathcal{S}^{N,n}_{\emph{x} ,k}(\zz)] }\right), 
	\end{equation}
	where $\gamma>0$, 
	\begin{equation}\label{eq:terms_in_B}
		\begin{split}
			&\mathcal{R}^{N,n}_{\emph{v},k}(\zz)   = \alpha_\emph{v}   (\bar{v}_{y,k}^n(\zz) + h \bar{E}_{y,k}^n(\zz) -\hat{v}_{y_k}) \bar{v}^n_{x,k}(\zz)  + \\ & \qquad \qquad \qquad + \frac{\beta_\emph{v} }{N_k} \sum_{i=1}^{N_k}\left[ (v_{y_i}^n(\zz) + h E_{y_i}^n(\zz) - \bar{v}_{y,k}^n(\zz))v_{x_i}^n (\zz)\right],\\
			&\mathcal{R}^{N,n}_{\emph{x},k}(\zz)   =   \alpha_\emph{x} (\bar{y}_{k}^n(\zz) + h(\bar{v}_{y,k}^n(\zz)+ h\bar{E}_{y,k}^n(\zz)) -\hat{y}_k) \bar{v}^n_{x,k}(\zz) +\\&\qquad \qquad \qquad + \frac{\beta_\emph{x}}{N_k} \sum_{i=1}^{N_k} \left[ (y_i^n(\zz) + h(v_{y_i}^n(\zz) + h E_{y_i}^n(\zz))-\bar{y}_{k}^n(\zz))v_{x_i}^n(\zz) \right],\\
			&\mathcal{S}^{N,n}_{\emph{v},k}(\zz)  =  h \left( \alpha_\emph{v} (\bar{v}_{x,k}^n(\zz))^2 + \frac{\beta_\emph{v}}{N_k} \sum_{i=1}^{N_k} (v_{x_i}^n(\zz))^2\right) ,\\
			&\mathcal{S}^{N,n}_{\emph{x},k}(\zz) =   h^2 \left( \alpha_\emph{x} (\bar{v}_{x,k}^n(\zz))^2 + \frac{\beta_\emph{x}}{N_k} \sum_{i=1}^{N_k} (v_{x_i}^n(\zz))^2\right),
		\end{split}
	\end{equation}
	and with $\mathbb{P}_{[-M,M]}(\cdot)$ denoting the projection over the interval $[-M,M]$. 
	In the limit $h\to 0$ the control at the continuous level reads, 
	\begin{equation}\label{eq:L2_control_continuos}
		B_k(t) = \mathbb{P}_{[-M,M]} \left( \frac1\gamma\left(\mathcal{P}\left[	\mathcal{R}^N_{\emph{v},k} (t,\zz) + 	\mathcal{R}^{N}_{\emph{x},k} (t,\zz)\right] \right)\right),
	\end{equation}
	with 
	\begin{equation*}
		\begin{split}
			&	\mathcal{R}^{n}_{\emph{v},k} (t,\zz) = \alpha_\emph{v} (\bar{v}_{y,k}(t,\zz) -\hat{v}_{y_k})\bar{v}_{x,k}(t,\zz) + \frac{\beta_\emph{v}}{N_k} \sum_{i\in C_k} (v_{y_i}(t,\zz) -\bar{v}_{y,k}(t,\zz)) v_{x_i}(t,\zz),\\
			&	\mathcal{R}^{n}_{\emph{x},k} (t,\zz) = \alpha_\emph{x} (\bar{y}_{k}(t,\zz)-\hat{y}_{k})\bar{v}_{x,k}(t,\zz) + \frac{\beta_\emph{x}}{N_k} \sum_{i\in C_k} (y_{i}(t,\zz) -\bar{y}_{k}(t,\zz)) v_{x_i}(t,\zz).
		\end{split}
	\end{equation*}
\end{proposition}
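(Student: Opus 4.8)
The plan is to mirror the argument carried out for Proposition~\ref{prop:istctrl}, the only structural novelty being that here the scalar control $B_k$ is shared by all particles of the cell $C_k$. Consequently the chain rule must act simultaneously on the cell averages $\bar y_k^{n+1},\bar v_{y,k}^{n+1}$ (which enter the $\alpha$–terms) and on each individual update $y_i^{n+1},v_{y_i}^{n+1}$ (which enter the $\beta$–terms), all of which depend on the single scalar $B_k$ through the explicit dynamics \eqref{eq:explicit_dinamics_nu0}.

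First I would introduce the augmented Lagrangian
\begin{equation*}
\mathcal{L}(B_k,\lambda_k)=\mathcal{J}_k^{N,h}(B_k)+\lambda_k(|B_k|-M),
\end{equation*}
and write the optimality system exactly as in \eqref{eq:Lagr_system}, with stationarity $\partial_{B_k}\mathcal{L}=0$ complemented by the complementarity alternative on $\lambda_k\ge 0$. As in the proof of Proposition~\ref{prop:istctrl}, I would use that both realizations \eqref{eq:R_mean}--\eqref{eq:R_max} of $\mathcal{P}[\cdot]$ commute with $\partial_{B_k}$, so the derivative can be pushed inside the statistical operator.

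Next I would differentiate \eqref{eq:discr_J} term by term. Reading off from \eqref{eq:explicit_dinamics_nu0} the identities $\partial_{B_k}v_{y_i}^{n+1}=-h v_{x_i}^n$ and $\partial_{B_k}y_i^{n+1}=-h^2 v_{x_i}^n$, together with their averaged versions $\partial_{B_k}\bar v_{y,k}^{n+1}=-h\bar v_{x,k}^n$ and $\partial_{B_k}\bar y_k^{n+1}=-h^2\bar v_{x,k}^n$, each of the four contributions splits into a part independent of $B_k$ and a part linear in $B_k$ (the latter arising from the term $-h v_{x_i}^n B_k$ contained in $v_{y_i}^{n+1}$). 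Collecting the $B_k$–independent parts yields, after the scaling \eqref{eq:scaling}, the numerator quantities $\mathcal{R}^{N,n}_{\text{v},k},\mathcal{R}^{N,n}_{\text{x},k}$, while collecting the coefficients of $B_k$ together with the penalty derivative produces the denominator $\gamma+\mathcal{S}^{N,n}_{\text{v},k}+\mathcal{S}^{N,n}_{\text{x},k}$ of \eqref{eq:terms_in_B}; the $\alpha$–terms generate the cell-averaged contributions and the $\beta$–terms generate the particle-wise sums. I would then close with the three-case analysis on $\lambda_k$, identical in spirit to Proposition~\ref{prop:istctrl}: if $\lambda_k>0$ then $|B_k|=M$, and testing $B_k=\pm M$ in the stationarity residual shows the unconstrained root falls outside $[-M,M]$, so $B_k$ saturates and equals $\mathbb{P}_{[-M,M]}(\cdot)$; if $\lambda_k=0$ the stationarity equation gives directly the interior ratio, which then lies in $[-M,M]$. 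Assembling the cases yields \eqref{eq:L2_control_space_velocity}.

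The main obstacle is the careful bookkeeping of the powers of $h$. Because the position update $y^{n+1}=y^n+h v_y^{n+1}$ inherits the $B_k$–dependence of $v_y^{n+1}$, the position ($\alpha_\text{x},\beta_\text{x}$) contributions carry one extra power of $h$ relative to the velocity ($\alpha_\text{v},\beta_\text{v}$) ones; the rescaling $\alpha_\text{x},\beta_\text{x}\mapsto\alpha_\text{x}/h,\beta_\text{x}/h$ together with $\gamma\mapsto\gamma h$ exactly equalizes these orders, so that every term of $\partial_{B_k}\mathcal{J}_k^{N,h}$ shares a common factor $h^2$. Dividing numerator and denominator by this factor produces \eqref{eq:L2_control_space_velocity}, and the limit $h\to0$ annihilates both the $O(h)$ field corrections inside $\mathcal{R}^{N,n}_{\text{v},k},\mathcal{R}^{N,n}_{\text{x},k}$ and the vanishing denominator contribution $\mathcal{S}^{N,n}_{\text{v},k},\mathcal{S}^{N,n}_{\text{x},k}=O(h)$, collapsing the formula to the continuous feedback law \eqref{eq:L2_control_continuos}.
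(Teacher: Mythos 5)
Your proposal is correct and follows essentially the same route as the paper: the paper states this appendix proposition without a separate proof, relying on the argument of Proposition~\ref{prop:istctrl} (augmented Lagrangian, commutation of $\mathcal{P}$ with $\partial_{B_k}$, term-by-term differentiation of the explicit dynamics, three-case KKT analysis, scaling, and the $h\to 0$ limit), which is exactly what you reconstruct, with the correct cell-wise chain rule producing the $\alpha$-averaged and $\beta$-particle-wise terms and the right powers of $h$.
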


We now focus on the two-dimensional Sod test discussed in Section~\ref{sec:2D_sod_shock}, using the same numerical setting.  
We consider a collisionless regime and introduce uncertainty in the system by sampling $ N_z = 10 $ Gauss--Legendre nodes.  
We compare the instantaneous controls defined in equations~\eqref{eq:L2_control_interp} and~\eqref{eq:L2_control_continuos}, using the parameters  
$ \alpha_\text{x} = 5 $, $ \beta_\text{x} = 2 $, $ \alpha_\text{v} = 15 $, $ \beta_\text{v} = 12 $, $ \gamma = 2.5 \times 10^{-3} $, and $ M = 50 $,  
with $ \mathcal{P}(\cdot) $ defined as in equation~\eqref{eq:R_max}.  
The simulation is carried out up to final time $ t_f = 2 $, with a time step $ h = 0.05 $. Figure~\ref{fig:comparison} summarizes the results.  
On the left, we report the mean thermal energy at the boundaries along with the corresponding standard deviation.  
In the center and on the right, we show the magnetic field values.  
In the first row, the initial temperature is defined as in equation~\eqref{eq:rho0_T0_2D}, while in the second row it corresponds to the high-temperature configuration given in equation~\eqref{eq:T0_10} (last equation).
In both cases, the new control strategy proposed in this work proves to be more effective in reducing thermal energy compared to the approach introduced in \cite{albi2024instantaneous}, with the improvement being particularly significant in high-temperature scenarios.

\begin{figure}[h!]
	\centering
	\includegraphics[width=0.3\linewidth]{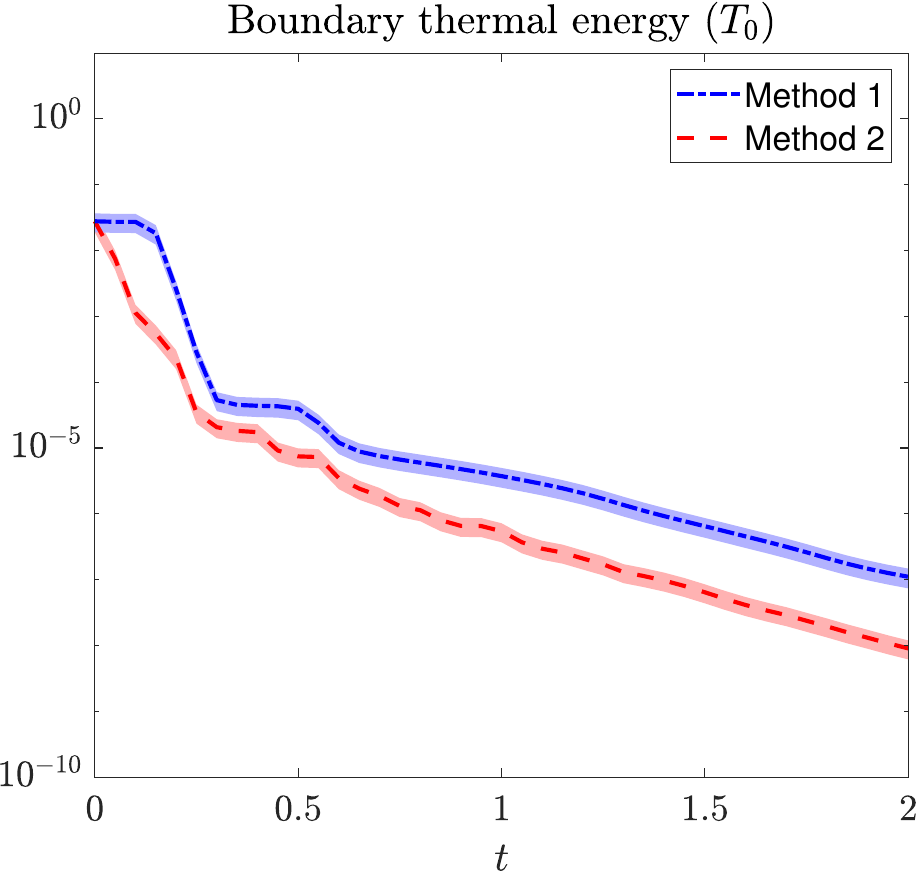}
	\includegraphics[width=0.3\linewidth]{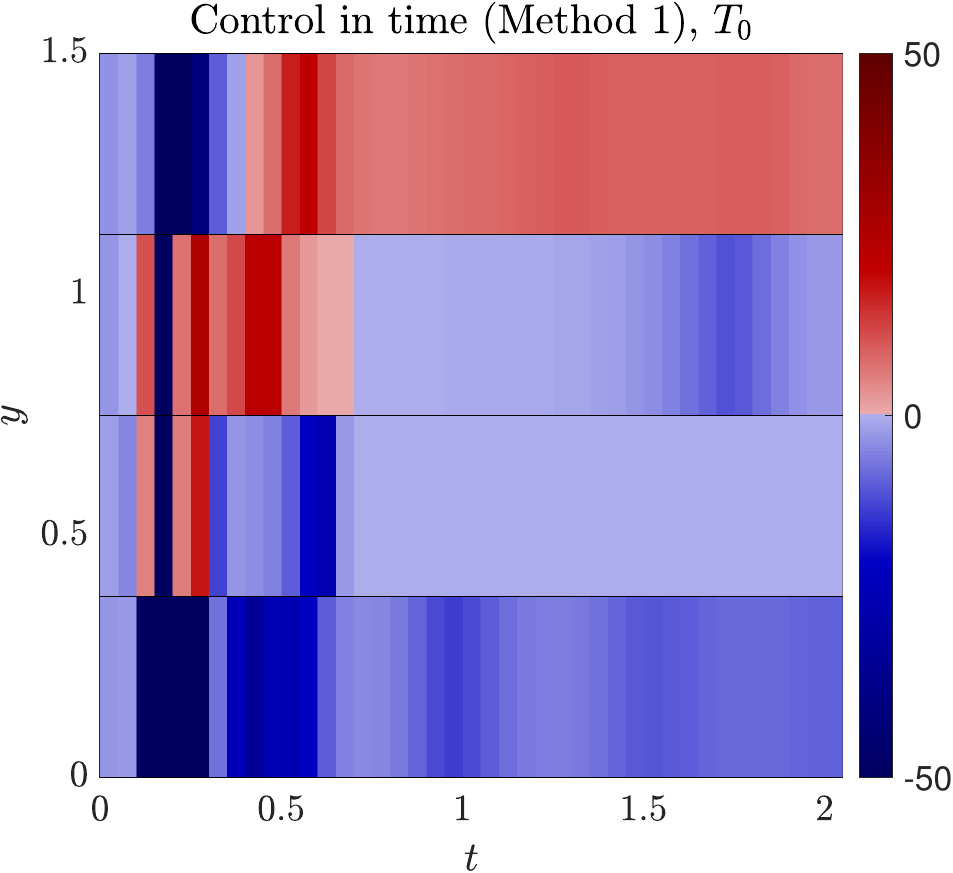} 
	\includegraphics[width=0.3\linewidth]{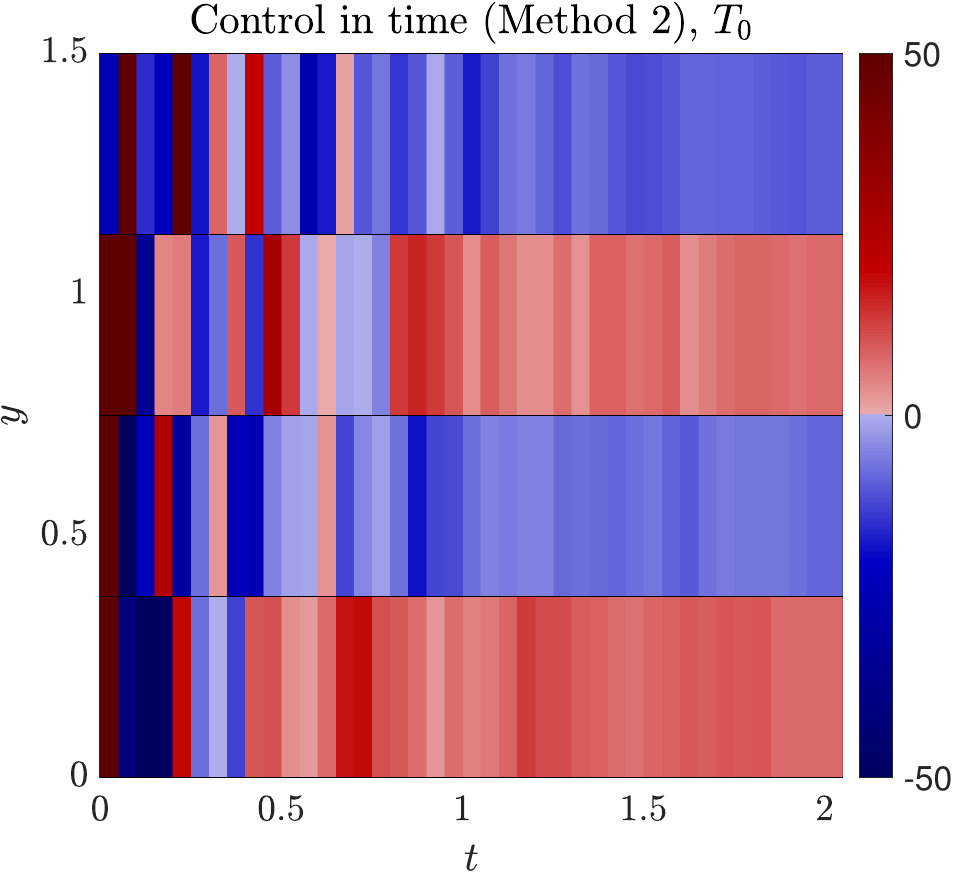} \\
	\includegraphics[width=0.3\linewidth]{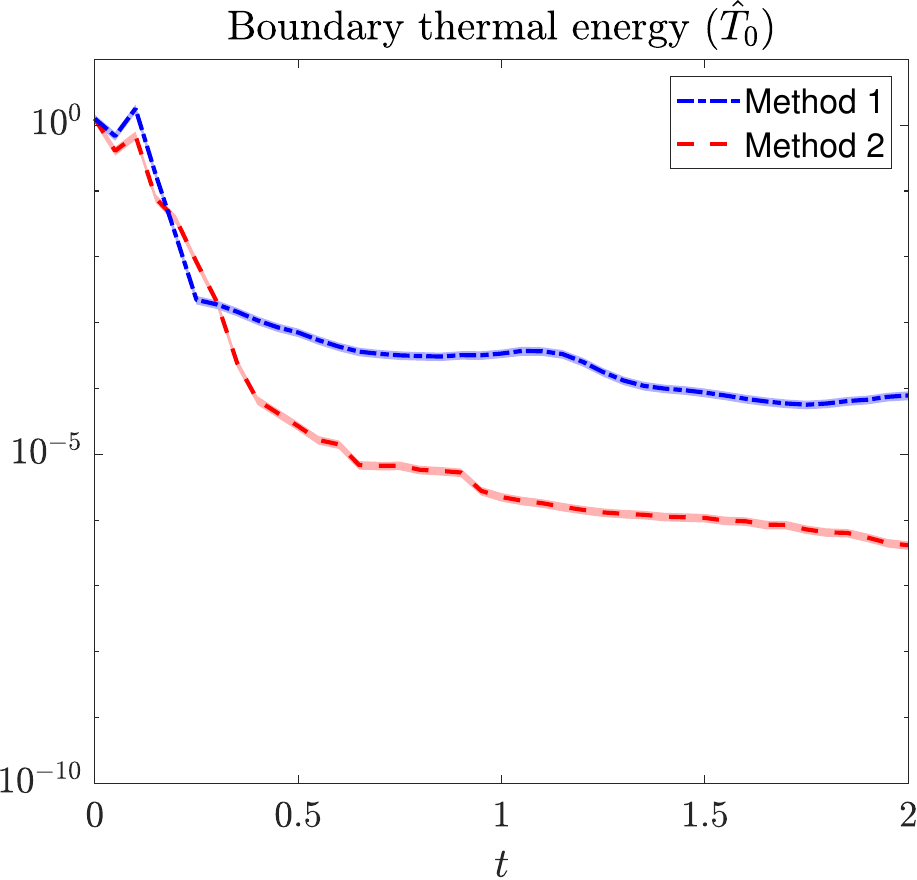}  
	\includegraphics[width=0.3\linewidth]{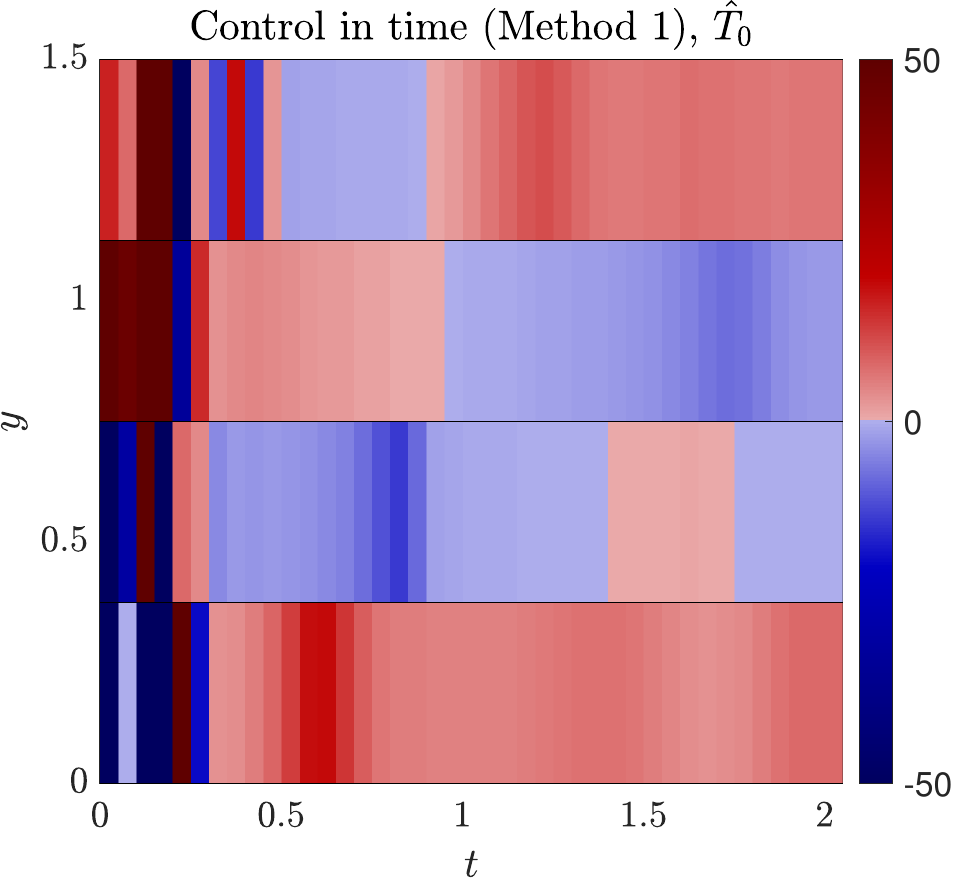}
	\includegraphics[width=0.3\linewidth]{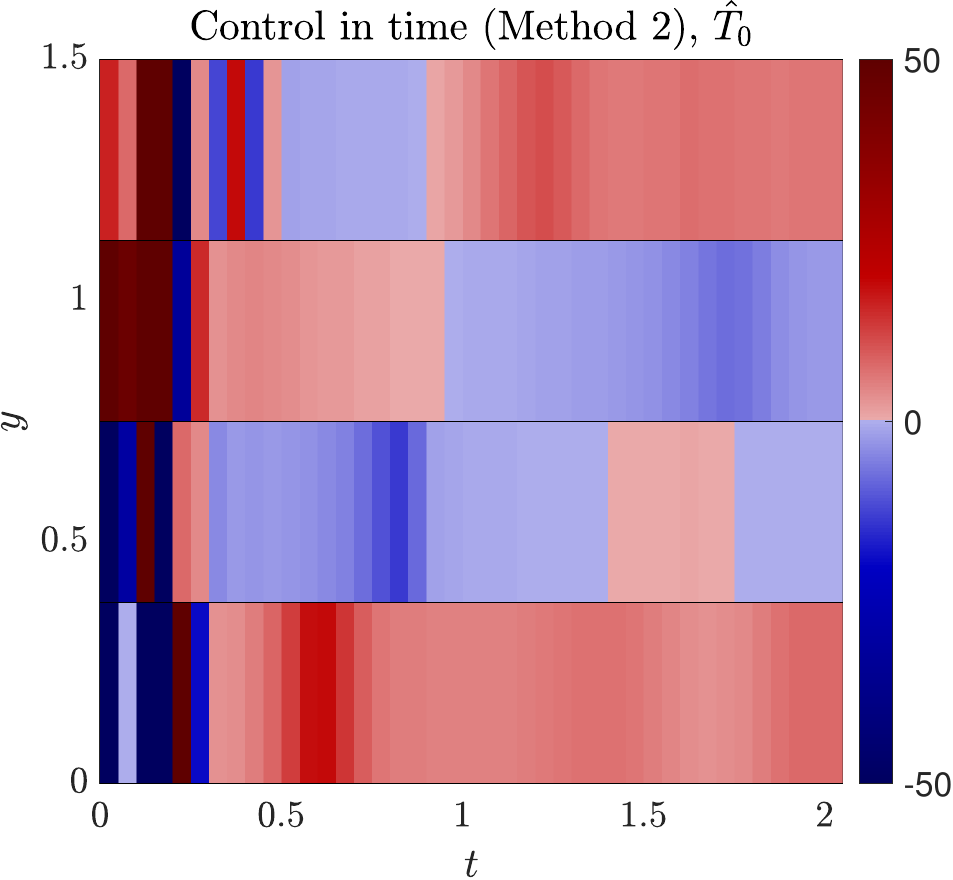}
	\caption{Two dimensional Sod shock tube test with control. Comparison between the two control strategies, \eqref{eq:L2_control_space_velocity} (Method 1) and \eqref{eq:L2_control_interp} (Method 2) in terms of thermal energy at the boundaries (on the left) and of magnetic field value (in the center and on the right). The different lines depict the mean thermal energy at the boundaries, while the shaded areas represent the standard deviation. }
	\label{fig:comparison}
\end{figure} 

\bibliographystyle{abbrv}
\bibliography{biblio}
\end{document}